\newcommand{\defeq}{\colonequals}
\newcommand{\eqdef}{\equalscolon}
\newcommand{\bC}{\mathbf{C}}
\newcommand{\bH}{\mathbf{H}}
\newcommand{\bO}{\mathbf{O}}
\newcommand{\R}{\mathbb{R}}
\newcommand{\prox}{\textnormal{prox}}
\newcommand{\X}{\mathcal{V}}
\newcommand{\bmat}[1]{\begin{bmatrix}#1\end{bmatrix}}
\newcommand{\pmat}[1]{\begin{pmatrix}#1\end{pmatrix}}
\newcommand{\sbmat}[1]{\left[\begin{smallmatrix}#1\end{smallmatrix}\right]}
\newcommand{\titleparagraph}[1]{\paragraph{#1}}
\newcommand{\algcomp}{\diamond}
\renewcommand{\Comment}[2][.5\linewidth]{%
  \leavevmode\hfill\makebox[#1][l]{\(\triangleright\)~#2}}
\algnewcommand{\Initialize}[1]{%
  \State \textbf{Initialize:}
  \Statex \hspace*{\algorithmicindent}\parbox[t]{.8\linewidth}{\raggedright #1}
}
\definecolor{dkred}{rgb}{.5,0,0}
\definecolor{dark-gray}{gray}{0.3}
\definecolor{dkgray}{rgb}{.4,.4,.4}
\definecolor{gray}{rgb}{.4,.4,.4}
\definecolor{grey}{rgb}{.4,.4,.4}
\definecolor{dkblue}{rgb}{0,0,.5}
\definecolor{medblue}{rgb}{0,0,.75}
\definecolor{rust}{rgb}{0.5,0.1,0.1}
\newcommand{\pushright}[1]{\ifmeasuring@#1\else\omit\hfill$\displaystyle#1$\fi\ignorespaces}
\newcommand{\pushleft}[1]{\ifmeasuring@#1\else\omit$\displaystyle#1$\hfill\fi\ignorespaces}
\newcommand{\beas}{\begin{eqnarray*}}
\newcommand{\eeas}{\end{eqnarray*}}
\newcommand{\bea}{\begin{eqnarray}}
\newcommand{\eea}{\end{eqnarray}}
\newcommand{\beq}{\begin{equation}}
\newcommand{\eeq}{\end{equation}}
\newcommand{\bit}{\begin{itemize}}
\newcommand{\eit}{\end{itemize}}
\newcommand{\ben}{\begin{enumerate}}
\newcommand{\een}{\end{enumerate}}
\newcommand{\ba}{\begin{array}}
\newcommand{\ea}{\end{array}}
\newcommand{\bbm}{\begin{bmatrix}}
\newcommand{\ebm}{\end{bmatrix}}
\newcommand{\eg}{e.g.}
\newcommand{\ie}{i.e.}
\newcommand{\rank}{\mathop{\bf rank}}
\newcommand{\diag}{\mathop{\bf diag}}
\newcommand{\Msim}{\overset{\hat M}{\sim}}
\newcommand{\Mpsim}{\overset{\hat M'}{\sim}}
\newcommand{\norm}[1]{\left\|#1\right\|}
\newcommand{\argmin}{\mathop{\rm argmin}}
\newcommand{\E}[1]{\mathbb{E}}
\newcommand{\lin}{\textsc{Linnaeus}}
\renewcommand{\argmin}{\mathop{\textrm{argmin}}}
\definecolor{mykeyword}{rgb}{0.6, 0.2, 0.8}
\definecolor{mycomment}{rgb}{0.0, 0.5, 0.0}
\definecolor{mystring}{rgb}{0.8, 0.0, 0.0}
\definecolor{mybackground}{rgb}{0.95, 0.95, 0.92}
\lstdefinestyle{mypython}{
    backgroundcolor=\color{mybackground},
    commentstyle=\color{mycomment}\itshape,
    keywordstyle=\color{mykeyword}\bfseries,
    stringstyle=\color{mystring},
    basicstyle=\ttfamily\footnotesize,
    frame=single,
    breaklines=true,
    showstringspaces=false,
    tabsize=4,
    language=Python,
	upquote=true,
    xleftmargin=0.05\textwidth,
    xrightmargin=0.05\textwidth,
    framexleftmargin=5pt,
    framexrightmargin=5pt,
    framextopmargin=5pt,
    framexbottommargin=5pt
}
\renewcommand{\ALG@name}{Algo.} % change name of "Algorithm" to be shorter
\newtheorem{theorem}{Theorem}
\newaliascnt{proposition}{theorem}
\newaliascnt{lemma}{theorem}
\newaliascnt{corollary}{theorem}
\newaliascnt{definition}{theorem}
\newaliascnt{remark}{theorem}
\newaliascnt{assumption}{theorem}
\newtheorem{proposition}[proposition]{Proposition}% 
\newtheorem{lemma}[lemma]{Lemma}% 
\newtheorem{corollary}[corollary]{Corollary}%
\crefname{proposition}{Proposition}{Propositions}
\Crefname{proposition}{Proposition}{Propositions}
\crefname{lemma}{Lemma}{Lemmas}
\Crefname{lemma}{Lemma}{Lemmas}
\crefname{corollary}{Corollary}{Corollaries}
\Crefname{corollary}{Corollary}{Corollaries}
\theoremstyle{definition}
\newtheorem{remark}[remark]{Remark}
\newtheorem{definition}[definition]{Definition}
\crefname{remark}{Remark}{Remarks}
\Crefname{remark}{Remark}{Remarks}
\crefname{definition}{Definition}{Definitions}
\Crefname{definition}{Definition}{Definitions}
\crefname{assumption}{Assumption}{Assumptions}
\Crefname{assumption}{Assumption}{Assumptions}
\begin{document}

\title[Article Title]{Algebraic characterization of equivalence between oracle-based iterative algorithms}

\author*[1]{\fnm{Laurent} \sur{Lessard}}\email{l.lessard@northeastern.edu}

\author[2]{\fnm{Madeleine} \sur{Udell}}\email{udell@stanford.edu}

\affil[1]{Northeastern University}

\affil[2]{Stanford University}

\abstract{
	When are two algorithms the same?
	How can we be sure a recently proposed algorithm is novel,
	and not a minor variation on an existing method?
	In this paper, we present a framework for reasoning about equivalence
	between a broad class of iterative algorithms,
	with a focus on algorithms designed for convex optimization.
	We propose several notions of what it means for two algorithms to be equivalent,
	and provide computationally tractable means to detect equivalence.
	Our main definition, oracle equivalence, states that two algorithms
	are equivalent if they result in the same sequence of calls to the function oracles
	(for suitable initialization).
	Borrowing from control theory,
	we use state-space realizations to represent algorithms
	and characterize algorithm equivalence via transfer functions.
	Our framework can also identify and characterize equivalence between
	algorithms that use different oracles that are related via a
	linear fractional transformation.
	Prominent examples include linear transformations and function conjugation.
	To support the paper, we have developed a software package named \lin{}
	that implements the framework to identify other
	iterative algorithms that are equivalent to an input algorithm.
}

\keywords{optimization algorithm, algorithm equivalence, algorithm transformation}

\maketitle

%%%%%%%%%%%%%%%%%%%%%%%%%%%%%%%%%%%%%%%%%%%%%%%%%%%%%%%%%%%%%%%%%%%%%%%%%%%%%%%%%%%%%%%%%%%
\section{Introduction}\label{intro}

Large-scale optimization problems in machine learning, signal processing, and imaging have fueled ongoing interest in iterative optimization algorithms.
New optimization algorithms are regularly proposed to
capture more complicated models, reduce computational burdens,
or obtain stronger performance and convergence guarantees.

However, the \emph{novelty} of an algorithm can be difficult to establish
because algorithms can be written in different equivalent forms.
For example, \cref{algo_i1} was originally proposed by Popov \citep{popov1980modification}
in the context of solving saddle point problems.
This method was later generalized by Chiang et al.~\cite[\S4.1]{chiang2012online}
in the context of online optimization.
\Cref{algo_i2} is a reformulation of \cref{algo_i1}
adapted for use in generative adversarial networks (GANs)~\cite{gidel2018a}.
\Cref{algo_i3} is an adaptation of \emph{Optimistic Mirror Descent}~\cite{OMD_rakhlin}
used by Daskalakis et al.~\cite{daskalakis2018training} and also used to train GANs.
Finally, \cref{algo_i4} was proposed by Malitsky~\cite{malitsky2015projected}
for solving monotone variational inequality problems.
In all four algorithms, the vectors $x^k_1$ and $x^k_2$ are algorithm states,
$\eta$ is a tunable parameter,
and $F$ is the gradient of the loss function.

\vspace{-1em}
\noindent\hfil
\begin{minipage}[t]{0.47\textwidth}
	\begin{algorithm}[H]
		\centering
		\captionsetup{font=scriptsize}
		\caption{(Modified Arrow--Hurwicz)}
		\label{algo_i1}
		\scriptsize
		\begin{algorithmic}
			\For{$k=0,1, 2,\dots$}
			\State{$x^{k+1}_1 = x^k_1 - \eta F(x^k_2)$}
			\State{$x^{k+1}_2 = x^{k+1}_1 - \eta F(x^k_2)$}
			\EndFor
		\end{algorithmic}
	\end{algorithm}
\end{minipage}
\hfil
\begin{minipage}[t]{0.47\textwidth}
	\begin{algorithm}[H]
		\centering
		\captionsetup{font=scriptsize}
		\caption{(Extrapolation from the past)}
		\label{algo_i2}
		\scriptsize
		\begin{algorithmic}
			\For{$k=0,1, 2,\dots$}
			\State{$x^k_2 = x^k_1 - \eta F(x^{k-1}_2)$}
			\State{$x^{k+1}_1 = x^k_1 - \eta F(x^{k}_2)$}
			\EndFor
		\end{algorithmic}
	\end{algorithm}
\end{minipage}
\hfil

\noindent\hfill
\begin{minipage}[t]{0.47\textwidth}
	\begin{algorithm}[H]
		\centering
		\captionsetup{font=scriptsize}
		\caption{(Optimistic Mirror Descent)}
		\label{algo_i3}
		\scriptsize
		\begin{algorithmic}
		\For{$k=0,1, 2,\dots$}
		\State{$x^{k+1}_2 =  x^k_2 - 2\eta F(x^k_2) + \eta F(x^{k-1}_2)$}
		\EndFor
		\end{algorithmic}
	\end{algorithm}
\end{minipage}
\hfill
\begin{minipage}[t]{0.47\textwidth}
	\begin{algorithm}[H]
		\centering
		\captionsetup{font=scriptsize}
		\caption{(Reflected Gradient Method)}
		\label{algo_i4}
		\scriptsize
		\begin{algorithmic}
		\For{$k=0,1, 2,\dots$}
		\State{$x^{k+1}_1 =  x^k_1 - \eta F( 2x^k_1 - x^{k-1}_1 )$}
		\EndFor
		\end{algorithmic}
	\end{algorithm}
\end{minipage}
\hfill
\vspace{1em}

\Crefrange{algo_i1}{algo_i4} are equivalent in the sense that when suitably initialized,
the sequences $(x^k_1)_{k\ge 0}$ and $(x^k_2)_{k\ge 0}$ are identical for all four algorithms.\footnote{
In their original formulations, \cref{algo_i1,algo_i2,algo_i4}
included projections onto convex constraint sets.
We assume an unconstrained setting here for illustrative purposes.
Some of the equivalences no longer hold in the constrained case.}
Although these particular equivalences are not difficult to verify
and many have been explicitly pointed out in the literature,
for example in~\cite{gidel2018a}, algorithm equivalence is not always immediately apparent.

In this paper, we present a framework for reasoning about algorithm equivalence,
with the ultimate goal of making the analysis and design of algorithms more principled and streamlined.
This includes:
\begin{itemize}
	\item A universal way of representing algorithms, inspired by methods from control theory.
	\item Sensible definitions of what it means for algorithms to be equivalent.
	\item A computationally efficient way to verify whether two algorithms are equivalent.
	\item A software package implementing this framework named \lin{}.
The software takes an algorithm described using natural syntax as input,
and returns a canonical form with known names
and pointers to relevant literature.
\end{itemize}

This paper studies equivalence of oracle-based algorithms at the oracle interface; any underlying optimization problem is external to the framework.

Briefly, our method is to parse each algorithm to a standard form
as a linear system in feedback with a nonlinearity;
to compute the \emph{transfer function} of each linear system;
and to check whether certain key relationships hold between the transfer functions of the algorithms in question.

This paper is organized as follows.
In \cref{relatedwork}, we briefly summarize existing literature related to our work.
In \cref{example}, we introduce four examples of equivalent algorithms that motivate our framework.
In \cref{preliminary}, we briefly review important background on linear systems and optimization
used throughout the paper and in \cref{control}, we present our control-inspired mathematical framework for algorithm representation.
We formally define three notions of algorithm equivalence:
\emph{oracle equivalence} (\cref{oracle-equ}), \emph{shift equivalence} (\cref{shift-equ}), and \emph{LFT equivalence} (\cref{lft-equ})
to handle cases with: one oracle, multiple oracles, and different but related oracles, respectively.
We discuss further generalizations and applications in \cref{discussion} and conclude in \cref{conclusion}.

%%%%%%%%%%%%%%%%%%%%%%%%%%%%%%%%%%%%%%%%%%%%%%%%%%%%%%%%%%%%%%%%%%%%%%%%%%%%%%%%%%%%%%%%%%%
\section{Related work}\label{relatedwork}

Within the optimization literature, several standard forms have been proposed to represent
problems and algorithms.
For example, the CVX* modeling languages represent (disciplined) convex optimization problems in a
standard conic form,
building up the representations of complicated problems
from a few basic functions and a small set of composition rules \cite{cvx, gb08, udell2014convex, diamond2016cvxpy, shen2017disciplined}.
This paper builds on a foundation developed by Lessard et al. \cite{doi:10.1137/15M1009597}
that represents first-order algorithms as linear systems in feedback with a nonlinearity.
Lessard et al.\ use this representation to analyze convergence properties of an algorithm with integral quadratic constraints.
Our work generalizes their representation to algorithms that use multiple related oracles.

There are rich connections between many first-order methods for convex optimization.
These algorithms are surveyed in a recent textbook by Ryu and Yin,
which summarizes and unifies several operator splitting methods for convex optimization \cite{ryuyinconvex}.
Many of these connections are well known to experts, but the connections have
traditionally been complicated to explain, communicate, or even remember.
For example, Boyd et al.~\cite{MAL-016} write,
``There are also a number of other algorithms distinct from but inspired
by ADMM. For instance, Fukushima \cite{applicationadmm} applies ADMM to a dual
problem formulation, yielding a `dual ADMM' algorithm, which is
shown in \cite{reformulationadmm} to be equivalent to the `primal Douglas--Rachford' method
discussed in \cite[\S3.5.6]{phdthesis}.''
As another example, Chambolle and Pock in~\cite{chambolle2011first}
proposed a new primal-dual splitting algorithm and demonstrated
that transformations of their algorithm can yield
Douglas--Rachford splitting and ADMM, using a full page of mathematics
to sketch the connection.
Using our framework, the relations between
the Chambolle--Pock method, Douglas--Rachford splitting, and ADMM
can be established precisely and automatically.

%%%%%%%%%%%%%%%%%%%%%%%%%%%%%%%%%%%%%%%%%%%%%%%%%%%%%%%%%%%%%%%%%%%%%%%%%%%%%%%%%%%%%%%%%%%
\section{Motivating examples}\label{example}

\crefrange{algo_i1}{algo_i4} discussed in \cref{intro} were equivalent in a strong sense; the iterates were in exact correspondence. In this paper, we adopt a broader view of equivalence, which we now illustrate with four motivating examples. Each example provides a different way that we consider two algorithms to be equivalent.

\noindent
\hfil
\begin{minipage}{0.47\textwidth}
	\begin{algorithm}[H]
		\centering
		\captionsetup{font=scriptsize}
		\caption{}
		\label{algo1}
		\scriptsize
		\begin{algorithmic}
			\For{$k=0, 1, 2,\ldots$}
			\State{$x^{k+1}_1 = 2x^k_1 - x^k_2 - \frac{1}{10} \nabla f(2x^k_1 - x^k_2)$}
			\State{$x^{k+1}_2 = x^k_1$}
			\EndFor
		\end{algorithmic}
	\end{algorithm}
\end{minipage}
\hfil
\begin{minipage}{0.47\textwidth}
	\begin{algorithm}[H]
		\centering
		\captionsetup{font=scriptsize}
		\caption{}
		\label{algo2}
		\scriptsize
		\begin{algorithmic}
			\For{$k=0, 1, 2,\ldots$}
			\State{$\xi^{k+1}_1 = \xi^k_1 - \xi^k_2 - \frac{1}{5} \nabla f(\xi^k_1)$}
			\State{$\xi^{k+1}_2 = \xi^k_2 + \frac{1}{10} \nabla f(\xi^k_1)$}
			\EndFor
		\end{algorithmic}
	\end{algorithm}
\end{minipage}
\hfil
\vspace{1em}

First consider \cref{algo1,algo2}.
We may transform the iterates of \cref{algo1} by the invertible linear map
$\xi^k_1 = 2x^k_1 - x^k_2, \xi^k_2 = - x^k_1+x^k_2$
to yield the iterates of \cref{algo2}.
Although the iterates are not in exact correspondence as in \crefrange{algo_i1}{algo_i4}, the sequences $(x^k_1)_{k\ge 0}$ and $(x^k_2)_{k\ge 0}$ are
equivalent to the sequences
$(\xi^k_1)_{k\ge 0}$ and $(\xi^k_2)_{k\ge 0}$
up to an invertible linear transformation.

\vspace{-1em}
\noindent
\hfil
\begin{minipage}[t]{0.47\textwidth}
	\begin{algorithm}[H]
		\centering
		\captionsetup{font=scriptsize}
		\caption{}
		\label{algo3}
		\scriptsize
		\begin{algorithmic}
			\For{$k=0, 1, 2,\ldots$}
			\State{${x}^{k+1}_1 = 3{x}^k_1 - 2x^k_2 + \frac{1}{5} \nabla f(-x^k_1 + 2x^k_2)$}
			\State{$x^{k+1}_2 = x^k_1$}
			\EndFor
		\end{algorithmic}
	\end{algorithm}
\end{minipage}
\hfil
\begin{minipage}[t]{0.47\textwidth}
	\begin{algorithm}[H]
		\centering
		\captionsetup{font=scriptsize}
		\caption{}
		\label{algo4}
		\scriptsize
		\begin{algorithmic}
			\For{$k=0, 1, 2,\ldots$}
			\State{$\xi^{k+1} = \xi^k - \frac{1}{5} \nabla f(\xi^k)$}
			\EndFor
		\end{algorithmic}
	\end{algorithm}
\end{minipage}
\hfil
\vspace{1em}

The second example consists of \cref{algo3,algo4}.
\Cref{algo4} is ordinary gradient descent.
These algorithms do not even have the same number of state variables,
so these algorithms are \emph{not} equivalent up to an invertible linear transformation.
But when suitably initialized,
we may transform the iterates of \cref{algo3} by the linear map
$\xi^k = -x^k_1 +2 x^{k}_2$
to yield the iterates of \cref{algo4}.
This transformation is linear but not invertible.
Instead, notice that the sequence of calls to the gradient oracle are identical:
the algorithms satisfy \emph{oracle equivalence},
a notion we will define formally later in this paper. Note that \cref{algo1,algo3} look similar, yet \cref{algo1} is \emph{not} equivalent to gradient descent.

\vspace{-1em}
\noindent
\hfil
\begin{minipage}[t]{0.47\textwidth}
	\begin{algorithm}[H]
		\centering
		\captionsetup{font=scriptsize}
		\caption{(Douglas--Rachford)}
		\label{algo5}
		\scriptsize
		\begin{algorithmic}
			\For{$k=0, 1, 2,\ldots$}
			\State{$x^{k+1}_1 = \prox_{f}(x^k_3)$}
			\State{$x^{k+1}_2 = \prox_{g}(2x^{k+1}_1 - x^k_3)$}
			\State{$x^{k+1}_3 = x^k_3 + x^{k+1}_2 - x^{k+1}_1$}
			\EndFor
		\end{algorithmic}
	\end{algorithm}
\end{minipage}
\hfil
\begin{minipage}[t]{0.47\textwidth}
	\begin{algorithm}[H]
		\centering
		\captionsetup{font=scriptsize}
		\caption{(Simplified ADMM)}
		\label{algo6}
		\scriptsize
		\begin{algorithmic}
			\For{$k=0, 1, 2,\ldots$}
			\State{$\xi^{k+1}_1 = \prox_{g}(\xi_2^k-\xi_3^k)$}
			\State{$\xi^{k+1}_2 = \prox_{f}(\xi^{k+1}_1+\xi^k_3)$}
			\State{$\xi^{k+1}_3 = \xi^k_3+\xi^{k+1}_1-\xi^{k+1}_2$}
			\EndFor
		\end{algorithmic}
	\end{algorithm}
\end{minipage}
\hfil
\vspace{1em}

The third example consists of \cref{algo5,algo6}. These algorithms are known as Douglas--Rachford splitting \cite{lions1979splitting, eckstein1992douglas}
and a special case of the alternating direction method of multipliers (ADMM) \cite[\S8]{ryuyinconvex}\cite{MAL-016}, respectively. 
With suitable initialization,
they will generate the same sequence of
calls to the proximal operators, ignoring the very first call to one of the oracles.
Specifically, \cref{algo6} is initialized as
$\xi^0_2=x_1^1$, $\xi^0_3=x_3^0-x_1^1$
and the first call to $\text{prox}_f$ in \cref{algo5} is ignored.
We will say they are equivalent up to a prefix or shift: they satisfy \emph{shift equivalence}.
We will revisit these algorithms in \cref{shift-equ}.

\vspace{-1em}
\noindent
\hfil
\begin{minipage}{0.47\linewidth}
\begin{algorithm}[H]
	\centering
	\captionsetup{font=scriptsize}
	\caption{(Proximal gradient)}
	\label{algo11x}
	\scriptsize
	\begin{algorithmic}
		\For{$k=0, 1, 2,\ldots$}
		\State{$y^k = x^k-t \nabla f(x^k)$}
		\State{$x^{k+1} = \prox_{tg}(y^k)\vphantom{\prox_{\frac{1}{t}g^*}(\frac{1}{t}y^k)}$}
		\EndFor
	\end{algorithmic}
\end{algorithm}
\end{minipage}
\hfil
\begin{minipage}{0.47\linewidth}
	\begin{algorithm}[H]
	\centering
	\captionsetup{font=scriptsize}
	\caption{(Conjugate proximal gradient)}
	\label{algo12x}
	\scriptsize
	\begin{algorithmic}
		\For{$k=0, 1, 2,\ldots$}
		\State{$y^k = x^k - t\nabla f(x^k)$}
		\State{$x^{k+1} =  y^k -
			t\prox_{\frac{1}{t}g^*}(\frac{1}{t}y^k)$}
		\EndFor
	\end{algorithmic}
\end{algorithm}
\end{minipage}
\hfil
\vspace{1em}

Finally, consider \cref{algo11x,algo12x}. These algorithms do not even call the same oracles; the first algorithm calls $\nabla f$ and $\prox_{tg}$ while the other calls $\nabla f$ and $\prox_{\frac{1}{t}g^*}$ (the proximal operator of the Fenchel conjugate of $g$). Nevertheless, these two oracles are related via Moreau's identity:
$x = \prox_{tg}(x) + t \prox_{\frac{1}{t}g^*}(\tfrac{1}{t}x)$ and applying this identity immediately relates \cref{algo11x,algo12x}. These algorithms satisfy \emph{LFT equivalence} and we will revisit them in \cref{sec:prox_subdiff}.

In \crefrange{oracle-equ}{lft-equ}, we will develop increasingly general notions of equivalence that cover all the motivating examples above and more. 
Before we can formally define algorithm equivalence, we begin by introducing the mathematical representation, borrowed from control theory, that we use to describe iterative algorithms.

%%%%%%%%%%%%%%%%%%%%%%%%%%%%%%%%%%%%%%%%%%%%%%%%%%%%%%%%%%%%%%%%%%%%%%%%%%%%%%%%%%%%%%%%%%%
\section{Preliminaries}\label{preliminary}

We let $\X$ denote a generic real vector space and
$\X^n \defeq \X \times \dots \times \X$ ($n$ times).
We represent $x \in \X^n$ as a column vector with subvectors indexed using subscripts.
In other words, $x = \sbmat{x_1 \\[-1mm]\vdots\\ x_n}$ where $x_1,\dots,x_n\in \X$.
Superscripts are used to index sequences of vectors.
For example, we may write $(x^0,x^1,\dots)$ to denote a semi-infinite sequence of vectors with $x^k \in \X^n$ for each $k\geq 0$.
If $A \in \R^{m\times n}$ and $x \in \X^n$, we overload matrix multiplication by writing $y = Ax \in \X^m$ to mean:
$y_i = \sum_{j=1}^n A_{ij} x_j$ for $i=1,\dots,m$. In this case, we say that $y$ is a \emph{linear function} of $x$.

An \emph{oracle} is a function $\phi: \X \to \X$. We denote the diagonal concatenation of many oracles $(\phi_1,\dots,\phi_p)$ using an upper-case letter: $\Phi: \X^p \to \X^p$, where $u = \Phi(y)$ means that $u_i = \phi_i(y_i)$ for $i=1,\dots,p$.

\titleparagraph{Oracle-based iterative algorithms}

We assume an oracle-based model for our iterative algorithms. The algorithm can query a set of oracles at discrete query points~\cite[\S4]{boyd_vandenberghe_2004}\cite[\S1]{MAL-050}\cite[\S1]{nesterov2018lectures}.
Common examples of oracles include gradients, proximal operators, and projection onto a constraint set~\cite[\S6]{doi:10.1137/1.9781611974997}\cite[\S2]{fenchel1953convex}\cite[\S1]{OPT-003}.
We assume that the oracle outputs are unique and deterministic once any exogenous choices have been fixed. For example, a subgradient oracle might return the subgradient of minimum norm, and a stochastic gradient oracle might return the gradient corresponding to a particular sample path.

For an iterative algorithm that uses oracles $(\phi_1,\dots,\phi_p)$, we assume the following.
\ben
\item The algorithm maintains an internal \emph{state} $x^k \in \X^n$ that is initialized to some $x^0$ before the algorithm begins. 
\item During iteration $k$, each oracle $\phi_i$ is queried exactly once. We call the associated query point $y_i^k \in \X$ and the query result $u_i^k \in \X$. In other words, $u_i^k = \phi_i(y_i^k)$. This convention ensures that query points and results are well defined and unambiguous. If an oracle must be queried multiple times during each iteration, we can simply treat each query as a separate oracle (see \emph{Repeated oracles} in \cref{discussion}). 
\item During iteration $k$, the oracles are queried in a prescribed order $\phi_{i_1},\dots,\phi_{i_p}$.
Each query point $y_{i_j}^k$ is a \emph{linear function} of the state $x^k$ and possibly of the query results $u_{i_1}^k,\dots,u_{i_{j-1}}^k$ obtained thus far. 
\item Once all oracles have been queried, the internal state $x^k$ is updated to $x^{k+1}$ using a \emph{linear function} of $x^k \in \X^n$ and of $u^k \in \X^p$.
\item All aforementioned linear functions are the same at every iteration (independent of $k$). In other words, the algorithm is \emph{time-invariant}.
\een
We will see that this class of algorithms includes commonly used algorithms,
such as accelerated methods, proximal methods, operator splitting methods, and more \cite{hu2020analysis, doi:10.1137/15M1009597}.

Our framework excludes algorithms whose parameters explicitly depend on the iterate index $k$, such as gradient-based methods with diminishing stepsizes.
We view time-varying algorithms as schemes for switching between different time-invariant algorithms.
Thus, in our framework, the notion of algorithm equivalence pertains to the
time-invariant algorithmic components, while the time variation is captured separately
by the switching scheme. Since our aim is to reason about algorithm equivalence, we
therefore restrict attention to time-invariant algorithms.

Here is a pseudo-code implementation of a generic iterative algorithm that satisfies the assumptions above.
\vspace{-1em}
\begin{algorithm}[H]
	\centering
	\caption{Implementation of a generic iterative algorithm}
	\label{algo_generic_iterative}
	\begin{algorithmic}
		\Initialize{$x^0 \in \X^n$}
		\For{$k=0, 1, 2,\ldots$}
			\For{$i=1,\ldots,p$}
				\State{ $y_i^k = \sum_{j=1}^n c_{ij} x_j^k + \sum_{j=1}^{i-1} d_{ij} u_j^k$ }\Comment{Evaluate query point for $i\textsuperscript{th}$ oracle.}
				\State{ $u_i^k = \phi_i(y_i^k)$ }\Comment{Query $i\textsuperscript{th}$ oracle.}
			\EndFor
			\State{$x^{k+1} = A x^k + B u^k$}\Comment{Update internal state.}
		\EndFor
	\end{algorithmic}
\end{algorithm}
\vspace{-1em}

\titleparagraph{State-space form}
We can write the updates in \cref{algo_generic_iterative} in the more compact form
\begin{subequations}\label{eq:ss_generic}
\begin{align}
x^{k+1} &= A x^k + B u^k, \label{ss1}\\
y^k &= C x^k + D u^k, \label{ss2}\\
u^k &= \Phi(y^k),
\end{align}
\end{subequations}
where $A\in\R^{n\times n}$, $B\in\R^{n\times p}$, $C = [c_{ij}] \in\R^{p\times n}$, and $D = [d_{ij}]\in\R^{p\times p}$. The equations \eqref{eq:ss_generic} can also be represented visually using a block diagram, as in \cref{fig_blkdiag_demo2}.

\medskip
\begin{remark}\label{rem:lower-triangular}
	In \cref{algo_generic_iterative}, we assumed the oracles were queried in the order $\phi_1,\dots,\phi_p$, so the $D = [d_{ij}]$ matrix is strictly lower triangular. If the oracles were queried in a different order, the rows and columns of $D$ would be permuted accordingly.
\end{remark}
\medskip

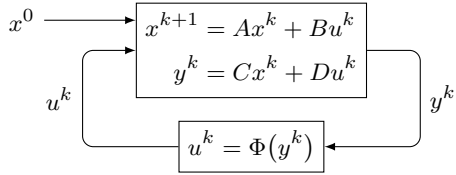
\begin{figure}[ht]
	\centering
	\begin{tikzpicture}[font=\small]
		\tikzstyle{arr}=[>=latex,auto,->,rounded corners]
		\node[draw] at (0, 0) (algo) {$\begin{aligned}
			x^{k+1} &= A x^k + B u^k\\
			y^{k} &= C x^k + D u^k
		\end{aligned}$};
		\node[draw,below= 0.3 of algo] (oracle) {$\begin{aligned}
			u^k = \Phi\bigl(y^k\bigr)\\
			\end{aligned}$};
		\coordinate (t1) at ($(algo.east) + (0.7,0)$);
		\coordinate (t2) at ($(algo.west) + (-0.7,0)$);
		\coordinate (t3) at ($(algo.west) + (-1.5,0.4)$);
		\draw[arr]  (algo) -- (t1) |- node[pos=0.25,align=left]{$y^k$} (oracle);
		\draw[arr]  (oracle) -| node[pos=0.75,align=right]{$u^k$} (t2) -- (algo);
		\node (x0) at (t3) {$x^0$};
		\draw[arr]  (x0) -- (x0 -| algo.west);
	\end{tikzpicture}
	\caption{Block diagram representation of a generic iterative algorithm.}
	\label{fig_blkdiag_demo2}
\end{figure}

The representation of \cref{fig_blkdiag_demo2} separates the \emph{oracles}, which map $y^k \mapsto u^k$, from the \emph{algorithm}, which maps $(x^0,u^0,u^1,\dots,u^k)\mapsto y^k$. This decomposition was first developed in \cite{doi:10.1137/15M1009597}. 
The algorithm is characterized by the matrices $(A,B,C,D)$, which are called a \emph{state-space realization}, and are a widely used representation for linear time-invariant dynamical systems \cite{williams2007linear}. 

We now present a few examples that illustrate how to find a state-space realization.

\titleparagraph{Example: Reflected Gradient Method}

Consider \cref{algo_i4}, which uses oracle $F$ and has update equation
\begin{align}\label{eq:alg_demo_b}
	x_1^{k+1} = x_1^k - \eta F(2x_1^k - x_1^{k-1}).
\end{align}
Since the update for $x_1^{k+1}$ depends on both $x_1^k$ and $x_1^{k-1}$, we augment the internal state to include this past iterate. To this effect, we define $x_2^k \defeq x_1^{k-1}$ and obtain update equations with state $(x_1^k,x_2^k)$ that only depend on the previous timestep:
\begin{align*}
x_1^{k+1} &= x_1^k - \eta F(2x_1^k - x_2^k) \\
x_2^{k+1} &= x_1^k
\end{align*}
Define the oracle query point $y^k$ and query result $u^k$. We can now express \cref{eq:alg_demo_b} in the form of \cref{algo_generic_iterative} and \cref{eq:ss_generic}:
\begin{align*}
&\left.\begin{aligned}
y^k &= 2x_1^k - x_2^k \\
u^k &= F(y^k) 
\end{aligned}\;\;\right\} & y^k &= \bmat{2 & -1}\bmat{x_1^k \\ x_2^k} + \bmat{0}u^k\\
&\left.\begin{aligned}
x_1^{k+1} &= x_1^k - \eta F(2x_1^k-x_2^k) = x_1^k - \eta u^k \\
x_2^{k+1} &= x_1^k
\end{aligned}\;\;\right\} & \bmat{x_1^{k+1} \\ x_2^{k+1}} &= \bmat{1 & 0 \\ 1 & 0}\bmat{x_1^k \\ x_2^k} + \bmat{-\eta \\ 0}u^k
\end{align*}
Therefore, a state-space realization for \cref{algo_i4} is given by
\begin{equation}\label{eq:ss_admm_b}
	(A,B,C,D) = \left( \bmat{1 & 0 \\ 1 & 0}, \bmat{-\eta \\ 0}, \bmat{2 & -1}, \bmat{0} \right).	
\end{equation}

\titleparagraph{Example: simplified ADMM}

Consider \cref{algo6} (simplified ADMM), 
which uses state variables $(\xi_1^k,\xi_2^k,\xi_3^k)$, 
oracles $(\prox_{f},\prox_{g})$, and update equations
\begin{align}\label{eq:alg_demo}
	\xi^{k+1}_1 &= \prox_{g}(\xi_2^k-\xi_3^k), &
	\xi^{k+1}_2 &= \prox_{f}(\xi^{k+1}_1+\xi^k_3), &
	\xi^{k+1}_3 &= \xi^k_3+\xi^{k+1}_1-\xi^{k+1}_2.
\end{align}
Define the oracle query points $(y_1^k,y_2^k)$ and query results $(u_1^k,u_2^k)$. We can now express \cref{eq:alg_demo} in the form of \cref{algo_generic_iterative} and \cref{eq:ss_generic}:
\begin{align*}
&\left.\begin{aligned}
y^k_2 &= \xi_2^k - \xi_3^k \\
u_2^k &= \prox_g(y_2^k) \\
y^k_1 &= \xi_1^{k+1} + \xi_3^k = \xi_3^k + u_2^k \\
u_1^k &= \prox_f(y_1^k)
\end{aligned}\;\;\right\} &
\bmat{y^k_1 \\ y^k_2}
&\!=\! \bmat{0 & 0 & 1 \\ 0 & 1 & -1}\!\!\bmat{\xi^k_1 \\ \xi^k_2 \\ \xi^k_3}
\!+\! \bmat{0 & 0 \\ 0 & 1}\!\!\bmat{u^k_1 \\ u^k_2} \\
&\left.\begin{aligned}
\xi_1^{k+1} &= u_2^k \\
\xi_2^{k+1} &= u_1^k \\
\xi_3^{k+1} &= \xi_3^k + \xi_1^{k+1} - \xi_2^{k+1} = \xi_3^k + u_2^k - u_1^k
\end{aligned}\;\;\right\} &
\bmat{\xi^{k+1}_1 \\ \xi^{k+1}_2 \\ \xi^{k+1}_3}
&\!=\! \bmat{0 & 0 & 0 \\ 0 & 0 & 0 \\ 0 & 0 & 1}\!\!\bmat{\xi^k_1 \\ \xi^k_2 \\ \xi^k_3}
\!+\! \bmat{0 & 1 \\ 1 & 0 \\ -1 & 1}\!\!\bmat{u^k_1 \\ u^k_2}
\end{align*}
Therefore, a state-space realization for \cref{algo6} is given by
\begin{equation}\label{eq:ss_admm}
	(A,B,C,D) = \left( \bmat{0 & 0 & 0\\0 & 0 & 0\\0 & 0 & 1}, \bmat{0 & 1 \\ 1 & 0\\-1 & 1}, \bmat{0 & 0 & 1\\ 0 & 1 & -1}, \bmat{0 & 1 \\ 0 & 0} \right).	
\end{equation}

\titleparagraph{Explicit and implicit implementations}

Given a state-space realization $(A,B,C,D)$ where $A\in\R^{n\times n}$, $B\in\R^{n\times p}$, $C\in\R^{p\times n}$, and $D\in\R^{p\times p}$, when is it possible to construct a corresponding step-by-step implementation in the form of \cref{algo_generic_iterative}?
By \cref{rem:lower-triangular}, it is possible provided there exists a permutation matrix $P$ such that $P^\top D P$ is strictly lower-triangular. The permutation $P$ describes the order in which the oracles $\phi_1,\dots,\phi_p$ will be evaluated in \cref{algo_generic_iterative}. Put another way, if we view $D$ as the adjacency matrix for a directed graph, the corresponding graph should be a \emph{directed acyclic graph} (DAG).

If the $D$ matrix does \emph{not} correspond to a DAG, the graph will exhibit a cycle, which will manifest itself as an implicit equation involving oracles.\footnote{also known as an ``algebraic loop'' or a ``circular dependency''.} For example, consider the realization
$(A,B,C,D) = (1,-t,1,-t)$. This algorithm has the update equation
\begin{align}\label{eq:implicit}
	\left.\begin{aligned}
		x^{k+1} &= x^k - t u^k \\
		y^k &= x^k - t u^k \\
		u^k &= \phi(y^k)
	\end{aligned}\quad\right\}
	\quad\iff\quad
	x^{k+1} = x^k - t \phi(x^{k+1}).
\end{align}
The $D$ matrix is not strictly lower-triangular and $x^{k+1}$ is defined \emph{implicitly}.

\medskip
\begin{definition}\label{def:explicit_implicit}
	If the state-space realization $(A,B,C,D)$ for an algorithm has the property that there exists a permutation matrix $P$ such that $P^\top DP$ is strictly lower-triangular, we say that the algorithm has an \emph{explicit} implementation. Otherwise, we say it has an \emph{implicit} implementation. 
\medskip
\end{definition}

The same algorithm may have an explicit implementation with one oracle and an implicit implementation with another oracle. For example, consider \cref{eq:implicit} and let $\phi = \nabla f$, where $f$ is convex. Then, by the first-order optimality conditions, we have
\begin{align*}
x^{k+1} &= \prox_{tf}(x^k) &\iff&& x^{k+1} &= \argmin_x \Bigl( f(x) + \tfrac{1}{2t}\norm{x-x^k}^2 \Bigr) \\
& &\iff&& x^{k+1} &= x^k - t \nabla f(x^{k+1}).
\end{align*}
In other words, using the oracle $\prox_{tf}$ yields an explicit implementation, but using the oracle $\nabla f$ yields an implicit implementation. We will see later (\cref{sec:prox_subdiff}) how to automatically detect such equivalences.

State-space realizations conveniently parametrize a large class of explicit and implicit
algorithms in terms of matrices $(A,B,C,D)$, but the representation is not unique.
For example, \crefrange{algo_i1}{algo_i4} have different realizations $(A,B,C,D)$
despite having identical state sequences.
In the next section, we show how tools from control theory can clarify the 
relations between these sorts of representations.

%%%%%%%%%%%%%%%%%%%%%%%%%%%%%%%%%%%%%%%%%%%%%%%%%%%%%%%%%%%%%%%%%%%%%%%%%%%%%%%%%%%%%%%%%%%
\section{Algorithm representation}\label{control}

In this section, we explain how to represent algorithms using \emph{transfer functions}, a standard tool in linear systems and control theory \cite[\S1--3]{antsaklis2006linear}\cite[\S1,2,5]{williams2007linear}. We will give an overview of relevant terminology and show how to convert an algorithm to and from the transfer function representation.

In \cref{preliminary}, we discussed algorithms that have a state-space realization
\begin{subequations}\label{eq:ss}
\begin{align}
	x^{k+1} &= Ax^k + Bu^k, \\
	y^k &= Cx^k + Du^k.
\end{align}
\end{subequations}
We represent semi-infinite sequences such as $(x^0,x^1,\dots)$ using their \emph{$z$-transforms}. That is, we define the formal power series\footnote{The use of $z^{-1}$ as the variable in the $z$-transform is a common convention in control theory.}
\[
\mathcal{Z}[x^k] = \hat x(z) \defeq \sum_{k=0}^\infty x^k z^{-k}
\]
and similarly for $\hat u(z)$ and $\hat y(z)$. When taking the $z$-transform of the forward-shifted sequence $(x^1,x^2,\cdots)$, we have:
\[
\mathcal{Z}\bigl[x^{k+1}\bigr] = \sum_{k=0}^\infty x^{k+1} z^{-k} = z (\hat x(z) - x^0) 
\]
Evaluating the $z$-transform of \eqref{eq:ss}, we obtain:
\begin{subequations}\label{eq:ssz}
	\begin{align}
		z(\hat x(z) - x^0) &= A \hat x(z) + B\hat u(z), \\
		\hat y(z) &= C \hat x(z) + D \hat u(z).
	\end{align}
\end{subequations}
Eliminating $\hat x(z)$ from \eqref{eq:ssz}, we obtain
\begin{equation}\label{eq:zdomain_yu_map}
\hat y(z) = \underbrace{ z C(zI-A)^{-1} }_{\hat O(z)} x^0
+ \underbrace{ \left( D + C(zI-A)^{-1}B \right)}_{\hat H(z)} \hat u(z)
\end{equation}
The (matrix-valued) functions $\hat O(z)$ and $\hat H(z)$ are convenient to work with because they relate $\hat y(z)$ and $\hat u(z)$ via conventional matrix multiplication.
The function $\hat H(z)$ is called the \emph{transfer function}, and this is how we will represent the state-space system $(A,B,C,D)$. 
We use the special notation
\begin{equation}\label{eqH}
	\hat H(z) = \left[\begin{array}{c|c}A&B\\ \hline C&D\end{array}\right]
	\defeq D + C(zI-A)^{-1} B.
\end{equation}
Since these transfer functions arise from linear state-space systems, they are rational matrix-valued functions of z, so the identities required for our equivalence tests reduce to exact algebraic identities between rational functions.

For ease of notation, we will often omit the ``$(z)$'' after each transfer function, so when we write $\hat H_1 = \hat H_2$, we mean that $\hat H_1(z) = \hat H_2(z)$ for all $z$.
We will also overload oracles so that they may apply to the $z$-transforms directly by threading across coefficients. Namely, if $\Phi(y^k) = u^k$ for $k=0,1,\dots$, we will write:
\[
\Phi(\hat y) \defeq \sum_{k=0}^\infty \Phi(y^k) z^{-1} = \sum_{k=0}^\infty u^k z^{-1} = \hat u
\]
Therefore, the block diagram of \cref{fig_blkdiag_demo2} can be written in terms of transfer functions and $z$-transforms as in \cref{fig_blkdiag_demo3}.

\begin{figure}[ht]
	\centering
	\begin{tikzpicture}[font=\small]
		\tikzstyle{arr}=[>=latex,auto,->,rounded corners]
		\node[draw,minimum height=1cm] at (0, 0) (algo) {$\begin{aligned}
			\hat y = \hat O x^0 + \hat H \hat u
		\end{aligned}$};
		\node[draw,below= 0.3 of algo] (oracle) {$\begin{aligned}
			\hat u = \Phi\bigl(\hat y\bigr)\\
			\end{aligned}$};
		\coordinate (t1) at ($(algo.east) + (0.7,0)$);
		\coordinate (t2) at ($(algo.west) + (-0.7,-0.2)$);
		\coordinate (t3) at ($(algo.west) + (-1.5,0.2)$);
		\draw[arr]  (algo) -- (t1) |- node[pos=0.25,align=left]{$\hat y$} (oracle);
		\draw[arr]  (oracle) -| node[pos=0.75,align=right]{$\hat u$} (t2) -- (t2 -| algo.west);
		\node (x0) at (t3) {$x^0$};
		\draw[arr]  (x0) -- (x0 -| algo.west);
	\end{tikzpicture}
	\caption{Block diagram representation of a generic optimization algorithm expressed in terms of its transfer function $\hat H$ and the $z$-transforms of its inputs and outputs.}
	\label{fig_blkdiag_demo3}
\end{figure}
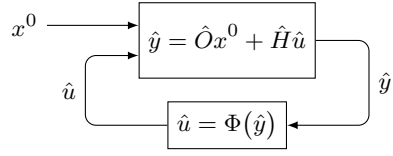

Applying the formula \eqref{eqH} to the state-space matrices of simplified ADMM (\cref{algo6}) from \eqref{eq:ss_admm}, we obtain the transfer function
\begin{equation}\label{eq:admmtf}
\hat H_{\ref{algo6}} = 
\bmat{\frac{-1}{z-1} & \frac{z}{z-1} \\
\frac{2 z-1}{z(z-1)} & \frac{-1}{z-1}}.
\end{equation}
The transfer function can be readily computed directly from the update equations by replacing each state by its $z$-transform, neglecting initial conditions, and eliminating the state variables. The following Python code computes the transfer function for \cref{algo6} (\cref{eq:admmtf}) starting from the update equations.

\smallskip
\begin{lstlisting}[style=mypython]
from sympy import Eq, var, solve, simplify, linear_eq_to_matrix

# Define the symbols
var('z xi1 xi2 xi3 y1 y2 u1 u2')

# Unknowns to solve for
unknowns = (xi1, xi2, xi3, y1, y2)

# State-update and output equations
eqns = [
  Eq(z*xi1, u2),                  # xi1[k+1] = u2[k]
  Eq(z*xi2, u1),                  # xi2[k+1] = u1[k]
  Eq(z*xi3, xi3 + z*xi1 - z*xi2), # xi3[k+1] = xi3[k] + xi1[k+1] - xi2[k+1]
  Eq(y1, z*xi1 + xi3),            #    y1[k] = xi1[k+1] + xi3[k]
  Eq(y2, xi2 - xi3),              #    y2[k] = xi2[k] - xi3[k]
]

# Solve for states and outputs in terms of inputs
sol = solve(eqns, unknowns)

# Algorithm inputs and outputs
inputs = (u1, u2)
outputs = (sol[y1], sol[y2])

# Extract transfer matrix H from y = H u
H, _ = linear_eq_to_matrix(outputs, inputs)
H = simplify(H)
\end{lstlisting}

\smallskip

\noindent\cref{eq:zdomain_yu_map} shows that state-space systems have two components:
\begin{enumerate}
	\item The map from initial state to output, $\hat O$.
	\item The transfer function, $\hat H$.
\end{enumerate}
We argue that the transfer function $\hat H$ alone is a sufficiently rich representation for the purpose of evaluating the equivalence of state-space systems. Roughly, when two state space systems have the same transfer function, we can find initial conditions that cause the systems to have \emph{identical input-output maps}. In other words, from the perspective of the oracle, the algorithms are indistinguishable. We state this result as \cref{prop:tf-ss} below.

\medskip
\begin{proposition}\label{prop:tf-ss}
	Suppose system $i\in\{1,2\}$ has state-space realization $(A_i,B_i,C_i,D_i)$, initial state $x_i^0$, and associated transfer function $\hat H_i$. The following are equivalent.
	\begin{enumerate}
		\item $\hat H_1 = \hat H_2$.
		\item There exist $x_1^0$ and $x_2^0$ such that
		both systems have the same input-output map. \label{it:prop1}
	\end{enumerate}
\end{proposition}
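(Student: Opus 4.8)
The plan is to prove the two implications separately, using the $z$-transform formula \eqref{eq:zdomain_yu_map} as the common bridge. Recall from \eqref{eq:zdomain_yu_map} that for system $i$ with initial state $x_i^0$, the input-output map sends $\hat u$ to $\hat y = \hat O_i x_i^0 + \hat H_i \hat u$, where $\hat O_i(z) = zC_i(zI-A_i)^{-1}$ and $\hat H_i$ is the transfer function. So the input-output map of system $i$ is an affine map in $\hat u$ whose linear part is multiplication by $\hat H_i$ and whose constant (input-independent) term is $\hat O_i x_i^0$.

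For the direction \ref{it:prop1} $\Rightarrow$ (1): if there exist $x_1^0, x_2^0$ making the two affine maps $\hat u \mapsto \hat O_i x_i^0 + \hat H_i \hat u$ coincide as functions of $\hat u$, then equating them at two different inputs (or simply subtracting) forces the linear parts to agree, i.e. $\hat H_1 \hat u = \hat H_2 \hat u$ for all $\hat u$; taking $\hat u$ to range over a spanning set of sequences (e.g. $\hat u = e_j$, the $j$th standard basis vector viewed as a constant $z$-transform) gives $\hat H_1 = \hat H_2$ columnwise. A small subtlety: these are formal power series / rational matrix identities, so "for all $\hat u$" should be read as an identity of rational functions; picking finitely many constant inputs suffices to pin down each entry.

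For the direction (1) $\Rightarrow$ \ref{it:prop1}: suppose $\hat H_1 = \hat H_2 =: \hat H$. Then the two input-output maps differ only in their constant terms $\hat O_1 x_1^0$ and $\hat O_2 x_2^0$. It would suffice to choose $x_1^0, x_2^0$ so that $\hat O_1 x_1^0 = \hat O_2 x_2^0$ — the easiest choice being $x_1^0 = 0$ and $x_2^0 = 0$, which makes both constant terms vanish and both maps equal to $\hat u \mapsto \hat H \hat u$. This handles the existence claim immediately. (One could also remark that any pair with $\hat O_1 x_1^0 = \hat O_2 x_2^0$ works, but the zero initialization is the clean witness.)

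The main thing to be careful about — and what I'd flag as the only real obstacle — is the precise meaning of "same input-output map": the proposition as stated quantifies over \emph{all} admissible input sequences $(u^0, u^1, \dots)$, not over the feedback-constrained $\hat u = \Phi(\hat y)$. So I would state explicitly that the input-output map of system $i$ is the map $\hat u \mapsto \hat y$ given by \eqref{eq:zdomain_yu_map}, treating $\hat u$ as a free exogenous signal, and then the argument above is just linear algebra over the field of rational functions in $z^{-1}$ (or over formal power series). Provided that reading is fixed, both implications are short; no invariance-of-realization or minimality machinery is needed, since we are comparing the maps directly rather than the realizations.
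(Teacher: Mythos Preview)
Your proposal is correct and follows essentially the same approach as the paper: write each input-output map as the affine map $\hat u \mapsto \hat O_i x_i^0 + \hat H_i \hat u$ from \eqref{eq:zdomain_yu_map}, observe that equality of these affine maps forces $\hat H_1 = \hat H_2$ (and $\hat O_1 x_1^0 = \hat O_2 x_2^0$), and for the converse take $x_1^0 = x_2^0 = 0$. The paper's proof is slightly terser but identical in structure and content.
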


Under additional mild assumptions about the state-space realization, we can strengthen the forward implication of \cref{prop:tf-ss} to include any initial condition.

\medskip
\begin{proposition}\label{prop:tf-ss2}
Consider the setting of \cref{prop:tf-ss} and further assume that both systems have \emph{minimal} state-space realizations. The following are equivalent.
\begin{enumerate}
\item $\hat H_1 = \hat H_2$.
\item For every initialization of one system, there exists a unique initialization of the other system such that both systems have the same input-output map. 
\end{enumerate}
\end{proposition}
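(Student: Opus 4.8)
Everything rests on the input--output description \eqref{eq:zdomain_yu_map}: system $i$ initialized at $x_i^0$ realizes the map $\hat u \mapsto \hat y = \hat{O}_i\, x_i^0 + \hat{H}_i\, \hat u$, where $\hat{O}_i(z) = z C_i (zI - A_i)^{-1}$. Expanding in powers of $z^{-1}$ and matching coefficients, two systems have the same input--output map precisely when $\hat{H}_1 = \hat{H}_2$ \emph{and} $\hat{O}_1 x_1^0 = \hat{O}_2 x_2^0$. The implication $2 \Rightarrow 1$ is then immediate --- it is essentially already contained in \cref{prop:tf-ss} --- since matching the $\hat u$-dependent parts of the two maps forces $\hat{H}_1 = \hat{H}_2$. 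So the real work is in $1 \Rightarrow 2$, and the plan is to reduce it to the classical uniqueness theorem for minimal realizations.

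Assuming $\hat{H}_1 = \hat{H}_2$ with both realizations minimal (hence controllable and observable), I would invoke the classical fact that two minimal realizations of the same transfer function are related by a \emph{unique} invertible matrix $T$ with $A_2 = T A_1 T^{-1}$, $B_2 = T B_1$, $C_2 = C_1 T^{-1}$ (and $D_1 = D_2$, which also follows directly from $\hat{H}_1 = \hat{H}_2$). Substituting this into the formula for $\hat{O}_2$ and using $(zI - T A_1 T^{-1})^{-1} = T(zI - A_1)^{-1} T^{-1}$ yields the clean identity $\hat{O}_2 = \hat{O}_1 T^{-1}$. Consequently, for any initialization $x_1^0$ of system $1$, the choice $x_2^0 \defeq T x_1^0$ gives $\hat{O}_2 x_2^0 = \hat{O}_1 x_1^0$, so by the first paragraph the two systems have identical input--output maps; applying the same argument with the roles of the two systems interchanged (replacing $T$ by $T^{-1}$) covers ``every initialization of one system,'' as the statement demands.

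For uniqueness of $x_2^0$, I would argue that $v \mapsto \hat{O}_2 v$ is injective: reading off the coefficient of $z^{-k}$ in $\hat{O}_2(z) v = \sum_{k \ge 0} C_2 A_2^k v\, z^{-k}$ shows that $\hat{O}_2 v = 0$ implies $C_2 A_2^k v = 0$ for all $k \ge 0$, i.e.\ the observability matrix of $(C_2, A_2)$ annihilates $v$; minimality makes that matrix left-invertible over $\R$, whence $v = 0$. This remains valid even though $x_2^0 \in \X^n$ for an abstract real vector space $\X$, because a real left inverse acts coordinatewise on $\X^n$. Hence $x_2^0 = T x_1^0$ is the only initialization of system $2$ that matches system $1$ initialized at $x_1^0$, completing $1 \Rightarrow 2$.

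The main obstacle is not a computation but a citation-and-hygiene matter: cleanly invoking the state-space isomorphism theorem (minimal realizations of a given transfer function are related by a unique similarity transformation), and double-checking that the $z$-transform and observability manipulations carry over verbatim when the state vectors live in $\X^n$ rather than $\R^n$. An alternative that avoids the isomorphism theorem would be to show directly that $\range \hat{O}_1 = \range \hat{O}_2$ as subspaces of the space of $z$-transforms, using minimality of both realizations; but this essentially re-proves the isomorphism theorem, so citing it is cleaner.
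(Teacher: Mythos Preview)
Your proposal is correct and follows essentially the same route as the paper: invoke the state-space isomorphism theorem for minimal realizations to get an invertible $T$ relating the two systems, compute $\hat O_2 = \hat O_1 T^{-1}$ to produce the matching initialization, and then use observability (via the power-series expansion of $\hat O_i$) to show the matching initialization is unique. The only cosmetic differences are that the paper starts from $x_2^0$ and solves for $x_1^0 = T^{-1} x_2^0$ rather than the other way around, and it does not pause over the $\X^n$ versus $\R^n$ issue you flag.
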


\noindent
\emph{Minimality} (see \cref{app:minimality}) means that the realization contains no redundant internal state. Equivalently, among all state-space realizations that induce the same transfer function, a minimal
realization has the smallest possible state dimension. In the present context, this means that the realization stores exactly the internal memory needed to reproduce the same oracle-input/output behavior, and no more.
Minimality is a mild assumption because from any non-minimal realization, one can construct a minimal realization with the same transfer function.

For proofs of \cref{prop:tf-ss,prop:tf-ss2} and further details on how to construct minimal realizations, see \cref{proof:prop12,app:construction}, respectively. 

\titleparagraph{From transfer functions to algorithms}

In this paper, we study the equivalence of algorithms by analyzing their transfer functions. We always start with update equations, which lead to state-space realizations, which lead to transfer functions. For completeness, we also provide a complete characterization of when the process can be reversed,
including a method to construct the update equations from the transfer function when possible, in \cref{app:construction}.

%%%%%%%%%%%%%%%%%%%%%%%%%%%%%%%%%%%%%%%%%%%%%%%%%%%%%%%%%%%%%%%%%%%%%%%%%%%%%%%%%%%%%%%%%%%
\subsection{Algorithm equivalence}\label{equivalence}

In the following three sections, we propose three notions of algorithm equivalence, each increasingly more general.
\begin{enumerate}
	\item \emph{Oracle equivalence.} For use when comparing algorithms that each use the same single oracle.
	\item \emph{Shift equivalence.} For use when comparing algorithms that each use the same set of oracles. Oracle equivalence is a special case of shift equivalence.
	\item \emph{LFT equivalence.} For use when comparing algorithms that use oracles that are related via a linear fractional transforms (LFTs), which we define in \cref{lft-equ}. Oracle equivalence and shift equivalence are both special cases of LFT equivalence.
\end{enumerate}
Although the above notions of equivalence cover all examples of algorithm equivalence we have observed in practice, there are limitations to our definitions. We discuss limitations and possible extensions in \cref{discussion}.

%%%%%%%%%%%%%%%%%%%%%%%%%%%%%%%%%%%%%%%%%%%%%%%%%%%%%%%%%%%%%%%%%%%%%%%%%%%%%%%%%%%%%%%%%%%
\section{Oracle equivalence}\label{oracle-equ}

The idea behind oracle equivalence is to ask the question: ``are the algorithms indistinguishable from the point of view of the oracle?''
In other words, if the algorithms are suitably initialized, would using the same algorithm inputs (oracle outputs) $(u^0,u^1,\dots)$ for both algorithms always produce the same algorithm outputs (oracle inputs) $(y^0,y^1,\dots)$?
If the answer is ``yes'', then the algorithms are \emph{oracle equivalent}.

Motivated by \cref{prop:tf-ss,prop:tf-ss2}, we will formally define oracle equivalence using the notion of transfer functions.

\medskip
\begin{definition}[oracle equivalence]\label{def1}
	Two algorithms that use the same oracles are \emph{oracle equivalent} if they have have the same transfer function.	
\end{definition}
\medskip

\noindent Oracle equivalence is a useful notion of algorithm equivalence:

\begin{enumerate}
\item There are many ways to re-parameterize an algorithm that change the state-space matrices $(A,B,C,D)$ or the internal state $x^k$. A sensible notion of equivalence should be independent of such transformations. Oracle equivalence achieves this independence by bypassing the state entirely and treating the algorithm as the map $(u^0,u^1,\dots)\mapsto(y^0,y^1,\dots)$.

\item Since oracle-equivalent algorithms generate identical oracle-input and oracle-output sequences, many analytical properties of interest are preserved, especially those commonly studied for optimization algorithms. For example, if the oracle is the gradient of a differentiable function, $u^k = \nabla f(y^k)$, then any quantity determined solely by the gradient queries and responses evolves identically for the two algorithms, including the sequence of gradient norms $\|\nabla f(y^k)\|$, the sequence of queried objective values $f(y^k)$, and any certificate or bound expressed in terms of these quantities.
\end{enumerate}

\titleparagraph{Invariance under linear state transformations}

Oracle equivalence (\cref{def1}) is invariant under linear transformations of state. Specifically, define $\tilde{x}^k = Tx^k$ for each $k$, where $T$ is an invertible matrix.
The state-space equations \eqref{eq:ss} expressed in terms of the new state variable $\tilde x^k$ become
\begin{subequations}\label{eqp11}
	\begin{align}
		\tilde x^{k+1}  & = TAT^{-1} \tilde x^k + TB u^k, \\
		y^k & = CT^{-1} \tilde x^k + Du^k.
	\end{align}
\end{subequations}
Substituting into \cref{eqH}, we can verify that both systems have the same transfer function.\footnote{Both systems will also have the same \emph{Markov parameters} and \emph{Hankel matrices} (see \cref{app:minimality}).} If we initialize the transformed system with $\tilde x^0 = T x^0$ and apply the same input $(u_0,u_1,\dots)$ to both systems, we will obtain the same output $(y_0,y_1,\dots)$, although the respective states $x^k$ and $\tilde x^k$ will generally be different.
This invariance is the key to understanding when two optimization algorithms are the same, even if they look different as written.
For example, this idea alone suffices to show that \cref{algo1,algo2} are equivalent. 

When using linear state transformations, the number of states (size of the $A$ matrix) is preserved. However, realizations with a different number of states can also be oracle equivalent. Although a realization with a larger $A$ matrix will generally lead to a transfer function with higher degree (via \cref{eqH}), there may be common factors that cancel from the numerator and denominator, leading to lower-degree transfer functions that could have been obtained from a realization with a smaller $A$ matrix. This idea is related to the notion of minimality (see \cref{app:minimality}) and explains why \cref{algo3,algo4} are equivalent.

\subsection{Examples of oracle equivalence}\label{charac-oracle}
Now, we will revisit the first and second motivating examples
and apply \cref{def1} to show oracle equivalence. Specifically, we will compute transfer functions using \cref{eqH} and verify that they are the same.

\titleparagraph{\texorpdfstring{\cref{algo1,algo2}}{Algorithms \ref{algo1} and \ref{algo2}}}
The state-space realization and transfer function of \cref{algo1,algo2} are
\begin{align*}
\hat H_{\ref{algo1}} &= \left[\begin{array}{c c|c}
	2 & -1 &-\frac{1}{10}\\
	1 & 0 & 0  \\
	\hline
	2 & -1 & 0
\end{array}\right] = \bmat{2 & -1 } \left(zI - \bmat{2 & -1 \\1 & 0}\right)^{-1} \bmat{-\frac{1}{10}\\  0} = \frac{-2z + 1}{10(z-1)^2},\\
\hat H_{\ref{algo2}} &= \left[\begin{array}{c c|c}
		1 & -1 &-\frac{1}{5}\\
		0 & 1 & \frac{1}{10}  \\
		\hline
		1 & 0 & 0
	\end{array}\right] =
	 \bmat{1 & 0} \left(zI - \bmat{1 & -1 \\ 0 & 1 }\right)^{-1} \bmat{-\frac{1}{5}\\  \frac{1}{10}} = \frac{-2z + 1}{10(z-1)^2}.
\end{align*}
Since $\hat H_{\ref{algo1}} = \hat H_{\ref{algo2}}$, \cref{algo1,algo2} are
oracle-equivalent by \cref{def1}. \cref{algo1} can also be transformed to \cref{algo2} as in \cref{eqp11} via $T = \sbmat{ 2 & -1 \\ -1 & 1}$.

\titleparagraph{\texorpdfstring{\cref{algo3,algo4}}{Algorithms \ref{algo3} and \ref{algo4}}}
The state-space realization and transfer function of \cref{algo3,algo4} are
\begin{align*}
\hat H_{\ref{algo3}} &= \left[\begin{array}{c c|c}
	3 & -2 & \frac{1}{5}\\
	1 & 0 & 0  \\
	\hline
	-1 & 2 & 0
\end{array}\right] =
\bmat{-1 & 2 } {\left(zI - \bmat{3 & -2 \\1 & 0}\right)}^{-1} \bmat{\frac{1}{5}\\  0}
		% = \frac{-(z-2)}{5(z-1)(z-2)}
		= \frac{-z+2}{5(z^2-3z+2)}
		=\frac{-1}{5(z-1)},\\
\hat H_{\ref{algo4}} &= \left[\begin{array}{c|c}
		1 & -\frac{1}{5}\\
		\hline
		1 &  0
	\end{array}\right] = \bmat{1} \left(zI - \bmat{1}\right)^{-1}
	\bmat{-\frac{1}{5}} = \frac{-1}{5(z-1)}.
\end{align*}
Since $\hat H_{\ref{algo3}} = \hat H_{\ref{algo4}}$, \cref{algo3,algo4} are
oracle-equivalent by \cref{def1}. The transfer functions are the same due to the cancellation of the common factor $(z-2)$ in the numerator and denominator of $\hat H_{\ref{algo3}}$.

\titleparagraph{\texorpdfstring{\Crefrange{algo_i1}{algo_i4}}{\ref{algo_i1}--\ref{algo_i1}}}

Using the same approach as above, we can derive the transfer functions for \crefrange{algo_i1}{algo_i4} and show that they are all equal to $\hat H(z) = -\frac{\eta (2z-1)}{z(z-1)}$. Therefore, \crefrange{algo_i1}{algo_i4} are oracle-equivalent by \cref{def1}.

\bigskip

\titleparagraph{Accelerated gradient methods}

Accelerated gradient methods are a class of optimization algorithms designed to improve the convergence speed of gradient-based methods, especially for convex optimization problems. Accelerated methods incorporate momentum-like terms and interpolated iterates that help the optimization process converge faster. Two well-know methods include Polyak's Heavy Ball (HB) \cite{polyak1964some} and Nesterov's Accelerated Gradient Method (NAG) \cite{nesterov1983method}, shown below as \cref{algo_heavyball,algo_nesterov}. These techniques and their stochastic variants are widely used in machine learning, signal processing, and numerical optimization.

\vspace{-1em}
\noindent\hfil\begin{minipage}[t]{0.47\textwidth}
	\begin{algorithm}[H]
		\centering
		\captionsetup{font=scriptsize}
		\caption{(Polyak's Heavy Ball)}
		\label{algo_heavyball}
		\scriptsize
		\begin{algorithmic}
			\For{$k=0,1, 2,\dots$}
			\State{$x^{k+1} = x^k - \alpha \nabla f(x^k) + \beta(x^k-x^{k-1})$}
			\EndFor
		\end{algorithmic}
	\end{algorithm}
\end{minipage}
\hfil
\begin{minipage}[t]{0.47\textwidth}
	\begin{algorithm}[H]
		\centering
		\captionsetup{font=scriptsize}
		\caption{(Nesterov's Method)}
		\label{algo_nesterov}
		\scriptsize
		\begin{algorithmic}
			\For{$k=0,1, 2,\dots$}
			\State{$y^k = x^k +\beta(x^k-x^{k-1})$}
			\State{$x^{k+1} = y^k - \alpha \nabla f(y^k)$}
			\EndFor
		\end{algorithmic}
	\end{algorithm}
\end{minipage}
\hfil

\vspace{1em}
Several works have proposed \emph{unified} momentum algorithms and associated analyses that generalize HB and NAG and allow the algorithm designer to interpolate between both algorithms. Examples include: 
Triple Momentum Method \cite{van2017fastest},
Quasi-Hyperbolic Momentum \cite{ma2019qh},
Stochastic Unified Method \cite{yan2018unified}, and
Unified Stochastic Momentum \cite{shen2023unified},
listed below as \crefrange{algo_TMM1}{algo_TMM4}, respectively.

\vspace{-1em}
\noindent\hfil\begin{minipage}[t]{0.47\textwidth}
	\begin{algorithm}[H]
		\centering
		\captionsetup{font=scriptsize}
		\caption{(Triple Momentum Method)}
		\label{algo_TMM1}
		\begin{algorithmic}
			\For{$k=0,1,2,\dots$}
			\State{$y^k = x^k + \gamma(x^k - x^{k-1})$}
			\State{$x^{k+1} = x^k - \alpha \nabla f(y^k) + \beta(x^k-x^{k-1})$}
			\EndFor
		\end{algorithmic}
	\end{algorithm}
\end{minipage}
\hfil
\begin{minipage}[t]{0.47\textwidth}
	\begin{algorithm}[H]
		\centering
		\captionsetup{font=scriptsize}
		\caption{(Quasi-Hyperbolic Momentum)}
		\label{algo_TMM2}
		\begin{algorithmic}
			\For{$k=0,1,2,\dots$}
			\State{$g^{k+1} = \beta g^k + (1-\beta)\nabla f(\theta^k)$}
			\State{$\theta^{k+1} = \theta^k -\alpha((1-\nu)\nabla f(\theta^k)+\nu g^{k+1})$}
			\EndFor
		\end{algorithmic}
	\end{algorithm}
\end{minipage}
\hfil

\vspace{-1em}
\noindent\hfil\begin{minipage}[t]{0.47\textwidth}
	\begin{algorithm}[H]
		\centering
		\captionsetup{font=scriptsize}
		\caption{(Stochastic Unified Momentum)}
		\label{algo_TMM3}
		\begin{algorithmic}
			\For{$k=0,1,2,\dots$}
			\State{$y^{k+1} = x^k - \alpha \nabla f(x^k)$}
			\State{$q^{k+1} = x^k - s\alpha\nabla f(x^k)$}
			\State{$x^{k+1}=y^{k+1}+\beta(q^{k+1}-q^k)$}
			\EndFor
		\end{algorithmic}
	\end{algorithm}
\end{minipage}
\hfil
\begin{minipage}[t]{0.47\textwidth}
	\begin{algorithm}[H]
		\centering
		\captionsetup{font=scriptsize}
		\caption{(Unified Stochastic Momentum)}
		\label{algo_TMM4}
		\begin{algorithmic}
			\For{$k=0,1,2,\dots$}
			\State{$m^k = \mu m^{k-1}-\eta \nabla f(x^k)$}
			\State{$x^{k+1}=x^{k}+m^k+\lambda\mu(m^k-m^{k-1})$}
			\EndFor
		\end{algorithmic}
	\end{algorithm}
\end{minipage}
\hfil

\vspace{1em}
\noindent The transfer functions for HB and NAG are clearly different:
\begin{align*}
\hat H_{\ref{algo_heavyball}} &= \frac{-\alpha z}{(z-1)(z-\beta)},&
\hat H_{\ref{algo_nesterov}} &= \frac{-\alpha(1+\beta)(z-\frac{\beta}{1+\beta})}{(z-1)(z-\beta)}.
\end{align*}
However, the transfer functions for \crefrange{algo_TMM1}{algo_TMM4} are:
\begin{align*}
\hat H_{\ref{algo_TMM1}}(z) &= -\frac{\alpha(1+\gamma)  (z-\frac{\gamma}{1+\gamma})}{(z-1) (z-\beta )}, &
\hat H_{\ref{algo_TMM2}}(z) &= -\frac{\alpha(1-\beta\nu)  (z-\frac{\beta(1-\nu)}{1-\beta\nu})}{(z-1) (z-\beta )}, \\
\hat H_{\ref{algo_TMM3}}(z) &= -\frac{\alpha(1+\beta s)  (z-\frac{\beta s}{1+\beta s})}{(z-1) (z-\beta )}, &
\hat H_{\ref{algo_TMM4}}(z) &= -\frac{\eta(1+\lambda \mu) (z-\frac{\lambda \mu}{1+\lambda \mu})}{(z-1) (z-\mu )}.
\end{align*}
Each of the above transfer functions are of the form $-\frac{a(z-c)}{(z-1)(z-b)}$ for some $a,b,c$ and can be made equal to one another (oracle equivalent) via suitable choices of the algorithm parameters. In other words, these algorithms parameterize the same space of possible algorithms (which also includes HB and NAG); they are equally general.

\titleparagraph{Distributed optimization}

For our final example of this section, we consider \emph{synchronous distributed optimization}, where a network of computing nodes work collaboratively to solve the problem
	\begin{align}\label{opt:distrop}
	\underset{x\in\R^d}{\text{minimize}} \quad & \sum_{i=1}^n f_i(x).
	\end{align}
Each node $i$ maintains a local state $x_i^k \in \R^d$ and can access the oracle $\nabla f_i$. At every timestep, each node can \emph{gossip} (obtain the local states of its neighboring nodes $x_j^k$), evaluate its local oracle, and perform computations to update its local state. There are two goals: (1) \emph{consensus}: the nodes' local states should converge to a common value, and (2) \emph{optimality}: the common value should be $x^\star$, a solution of \cref{opt:distrop}. For convenience, we use the shorthand notation
\[
x^k \defeq \bmat{x_1^k \\\vdots \\ x_n^k}
\quad\text{and}\quad
\nabla f(x^k) \defeq \bmat{ \nabla f_1(x_1^k) \\ \vdots \\ \nabla f_n(x_n^k)}.	
\]
Gossip is modeled as matrix multiplication $W x^k$, where $W = \tilde W \otimes I_d$ and $\tilde W \in \R^{n\times n}$ is a (typically sparse) row-stochastic matrix; it satisfies $0 \leq \tilde W \leq 1$ and $W\mathbf{1} = \mathbf{1}$.

Under suitable assumptions on $W$, the algorithm $x^{k+1} = W x^k$ achieves consensus at a linear rate, but the consensus value will be the mean of the $x_i^0$ rather than $x^\star$ (no optimality). Likewise, if the $f_i$ are smooth and strongly convex, gradient descent $x^{k+1} = x^k - \alpha \nabla f(x^k)$ achieves local but not global optimality: 
each node converges to the minimizer of its local $f_i$ rather than 
the minimizer of $\sum_{i=1}^n f_i$.
A simple algorithm that achieves both consensus and optimality is distributed gradient descent \cite{DGD}, which combines features of both gossip and gradient descent: $x^{k+1} = W x^k - \alpha_k \nabla f(x^k)$. 
However, this algorithm only converges sublinearly, 
even with strongly convex $f_i$, and requires a diminishing stepsize $\alpha_k \to 0$ to converge at all.

The first algorithm to guarantee linear convergence for strongly convex $f_i$ was 
EXTRA \cite{EXTRA}, and since then many papers have developed new algorithms or refined existing ones to solve \cref{opt:distrop} with a linear convergence rate. 
Two such well-known algorithms are NIDS \cite{NIDS} and Exact Diffusion \cite{ExDIFF}, shown below.

\vspace{-1em}
\noindent\hfil
\begin{minipage}[t]{0.6\linewidth}
\begin{algorithm}[H]
	\captionsetup{font=scriptsize}
	\caption{NIDS}
	\label{algoNIDS}
	\scriptsize
	\begin{algorithmic}
		\For{$k=0, 1, 2,\ldots$}
		\State{$x^{k+2} = W\bigl(2x^{k+1} -x^k-\alpha \nabla f(x^{k+1})+\alpha \nabla f(x^k)\bigr)$}
		\EndFor
	\end{algorithmic}
\end{algorithm}
\end{minipage}
\hfil
\begin{minipage}[t]{0.38\linewidth}
\begin{algorithm}[H]
	\captionsetup{font=scriptsize}
	\caption{Exact Diffusion}
	\label{algoExDIFF}
	\scriptsize
	\begin{algorithmic}
		\For{$k=0, 1, 2,\ldots$}
		\State{$x^{k+1} = w^k -\alpha \nabla f(w^k)$}
		\State{$y^k = x^{k+1} + w^k - x^k$}
		\State{$w^{k+1} = W y^k$} 
		\EndFor
	\end{algorithmic}
\end{algorithm}
\end{minipage}
\vspace{1em}

These algorithms were developed using different approaches. NIDS used a \emph{gradient-differencing} intuition similar to EXTRA to achieve linear convergence (storing the past gradient and updating based on the difference, as in \cref{algoNIDS}). In contrast, Exact Diffusion used an \emph{adapt-correct-combine} concept (corresponding to the three update equations in \cref{algoExDIFF}, respectively).

However, NIDS and Exact Diffusion are (oracle) equivalent \cite{canform_acc}! 
We can detect this equivalence automatically by computing the transfer function for each algorithm. In this case, we obtain
\(
\hat H_{\ref{algoNIDS}}(z) = \hat H_{\ref{algoExDIFF}}(z) = -\alpha (z-1) W (z^2 I - 2zW + W)^{-1}
\).

%%%%%%%%%%%%%%%%%%%%%%%%%%%%%%%%%%%%%%%%%%%%%%%%%%%%%%%%%%%%%%%%%%%%%%%%%%%%%%%%%%%%%%%%%%%
\section{Shift equivalence}\label{shift-equ}

Now consider \cref{algo5,algo6} from the third motivating example. We can calculate that the algorithms have different transfer functions:
\begin{equation}\label{ex:shift}
\hat H_{\ref{algo5}} = \bmat{
		\frac{-1}{z-1} & \frac{1}{z-1} \\
		\frac{2 z-1}{z-1} & \frac{-1}{z-1}},\qquad
\hat H_{\ref{algo6}} = \bmat{
		\frac{-1}{z-1} & \frac{z}{z-1} \\
		\frac{2 z-1}{z(z-1)} & \frac{-1}{z-1}},
\end{equation}
so they are not oracle-equivalent. We can represent the equations for \cref{algo5,algo6} using block diagrams that are \emph{unrolled in time}; see \cref{fig:algo_unroll_compare}. Based on the diagram, it is clear that the algorithms are just shifted versions of one another. If we initialize \cref{algo6} using $\xi_2^0 = x_1^1$ and $\xi_3^0 = x_3^0-x_1^1$, then it will make the same oracle calls as \cref{algo5}, but with a time shift.

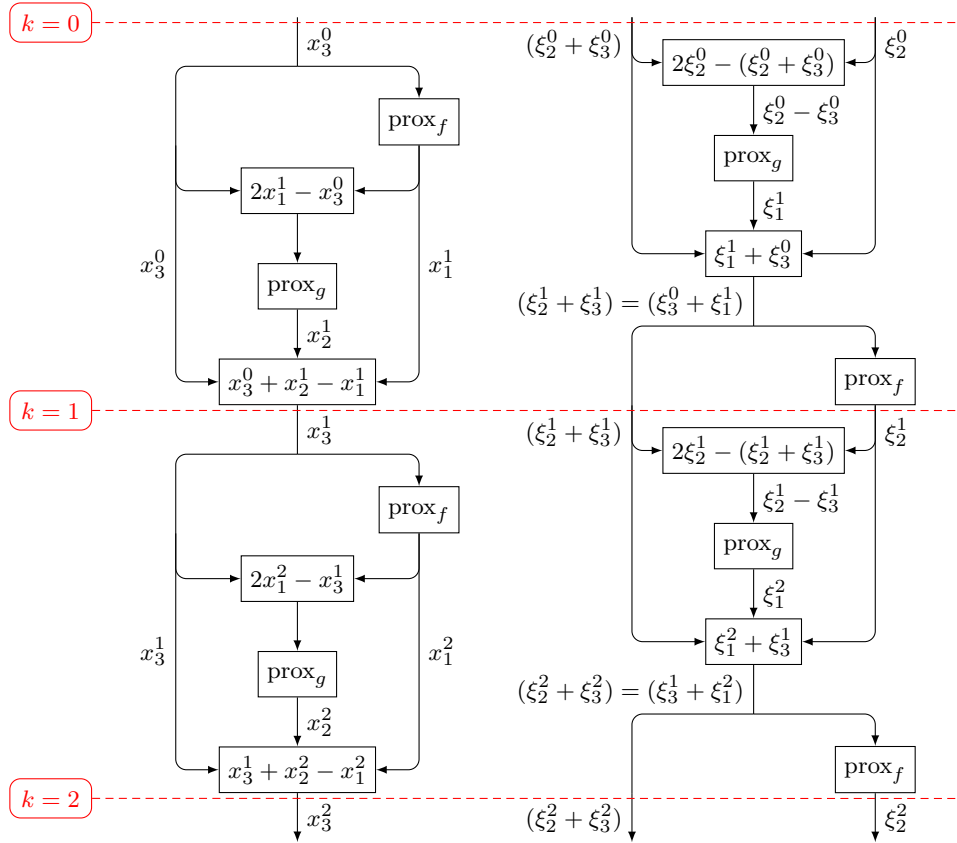
\begin{figure}[ht]
	\centering
	\begin{tikzpicture}[font=\small]
		\def\dy{1.6} % width of loops
		\def\db{0.65} % straight segment lengths
		\def\df{1.7} % dist between bottom of prox_f and above node
		\def\dn{0.6} % dist between bottom of prox_f and node below
		\tikzstyle{arr}=[>=latex,auto,->,rounded corners]
		\tikzstyle{timetag}=[draw,left,red,rounded corners,inner sep=1.5mm]

		% Algorithm 3.5
		\begin{scope}
			\coordinate (O) at (0,0);
			\coordinate (A) at (0,-\db);
			\coordinate (B) at (-\dy,-\df);
			\node[draw,anchor=south] (Pf) at (\dy,-\df) {$\text{prox}_f^{\vphantom{a}}$};
			\node[draw] (N1) at (0,-\df-\dn) {$\vphantom{\xi_1^1}2x_1^1-x_3^0$};
			\node[draw, below=\db of N1] (Pg) {$\text{prox}_g^{\vphantom{a}}$};
			\node[draw, below=\db of Pg] (N2) {$\vphantom{\xi_1^1}x_3^0+x_2^1-x_1^1$};
			\draw[auto] (0,0) -- node{$x_3^0$} (A);
			\draw[arr] (A) -| (B) |- node[swap,pos=0.25]{$x_3^0$} (N2);
			\draw[arr] (A) -| (Pf);
			\draw[arr] (B) |- (N1);
			\draw[arr] (Pf) |- (N1);
			\draw[arr] (N1) -- (Pg);
			\draw[arr] (Pg) -- node{$x_2^1$} (N2);
			\draw[arr] (Pf) |- node[pos=0.25]{$x_1^1$} (N2);
			\coordinate (R11) at (0,0);
		\end{scope}
		\begin{scope}[shift={(N2.south)}]
			\coordinate (A) at (0,-\db);
			\coordinate (B) at (-\dy,-\df);
			\node[draw,anchor=south] (Pf) at (\dy,-\df) {$\text{prox}_f^{\vphantom{a}}$};
			\node[draw] (N1) at (0,-\df-\dn) {$\vphantom{\xi_1^1}2x_1^2-x_3^1$};
			\node[draw, below=\db of N1] (Pg) {$\text{prox}_g^{\vphantom{a}}$};
			\node[draw, below=\db of Pg] (N2) {$\vphantom{\xi_1^1}x_3^1+x_2^2-x_1^2$};
			\draw[auto] (0,0) -- node{$x_3^1$} (A);
			\draw[arr] (A) -| (B) |- node[swap,pos=0.25]{$x_3^1$} (N2);
			\draw[arr] (A) -| (Pf);
			\draw[arr] (B) |- (N1);
			\draw[arr] (Pf) |- (N1);
			\draw[arr] (N1) -- (Pg);
			\draw[arr] (Pg) -- node{$x_2^2$} (N2);
			\draw[arr] (Pf) |- node[pos=0.25]{$x_1^2$} (N2);
			\coordinate (R21) at (0,0);
		\end{scope}
		\begin{scope}[shift={(N2.south)}]
			\coordinate (A) at (0,-\db);
			\draw[arr] (0,0) -- node{$x_3^2$} (A);
			\coordinate (R31) at (0,0);
		\end{scope}

		% Algorithm 3.6
		\begin{scope}[shift={($(O) + (6,0)$)}]
			\coordinate (B) at (-\dy,0);
			\coordinate (Pf) at (\dy,0);
			\node[draw] (N1) at (0,-\dn) {$2\xi_2^0 - (\xi_2^0+\xi_3^0)$};
			\node[draw, below=\db of N1] (Pg) {$\text{prox}_g^{\vphantom{a}}$};
			\node[draw, below=\db of Pg] (N2) {$\xi_1^1+\xi_3^0$};
			\draw[arr] (B) |- (N2);
			\draw[arr] (B) |- node[swap,pos=0.3]{$(\xi_2^0+\xi_3^0)$} (N1);
			\draw[arr] (Pf) |- node[pos=0.3]{$\xi_2^0$} (N1);
			\draw[arr] (N1) -- node{$\xi_2^0-\xi_3^0$} (Pg);
			\draw[arr] (Pg) -- node{$\xi_1^1$} (N2);
			\draw[arr] (Pf) |- (N2);
			\coordinate[below=\db of N2.south] (A);
			\draw[auto] (N2) -- node[swap]{$(\xi_2^1+\xi_3^1)=(\xi_3^0+\xi_1^1)$} (A);
			\node[draw,anchor=south] (Pf) at ($(N2.south) + (\dy,-\df)$) {$\text{prox}_f^{\vphantom{a}}$};
			\coordinate (q) at (-\dy,0);
			\draw[auto,rounded corners] (A) -| (q |- Pf.south) coordinate (g);
			\draw[arr] (A) -| (Pf);
			\coordinate (g) at (N2 |- Pf.south);
			\coordinate (R12) at (0,0);
		\end{scope}
		\begin{scope}[shift={(g)}]
			\coordinate (B) at (-\dy,0);
			\coordinate (Pf) at (\dy,0);
			\node[draw] (N1) at (0,-\dn) {$2\xi_2^1 - (\xi_2^1+\xi_3^1)$};
			\node[draw, below=\db of N1] (Pg) {$\text{prox}_g^{\vphantom{a}}$};
			\node[draw, below=\db of Pg] (N2) {$\xi_1^2+\xi_3^1$};
			\draw[arr] (B) |- (N2);
			\draw[arr] (B) |- node[swap,pos=0.3]{$(\xi_2^1+\xi_3^1)$} (N1);
			\draw[arr] (Pf) |- node[pos=0.3]{$\xi_2^1$} (N1);
			\draw[arr] (N1) -- node{$\xi_2^1-\xi_3^1$} (Pg);
			\draw[arr] (Pg) -- node{$\xi_1^2$} (N2);
			\draw[arr] (Pf) |- (N2);
			\coordinate[below=\db of N2.south] (A);
			\draw[auto] (N2) -- node[swap]{$(\xi_2^2+\xi_3^2)=(\xi_3^1+\xi_1^2)$} (A);
			\node[draw,anchor=south] (Pf) at ($(N2.south) + (\dy,-\df)$) {$\text{prox}_f^{\vphantom{a}}$};
			\coordinate (q) at (-\dy,0);
			\draw[auto,rounded corners] (A) -| (q |- Pf.south) coordinate (g);
			\draw[arr] (A) -| (Pf);
			\coordinate (g) at (N2 |- Pf.south);
			\coordinate (R22) at (0,0);
		\end{scope}
		\begin{scope}[shift={(g)}]
			\coordinate (B) at (-\dy,0);
			\coordinate (Pf) at (\dy,0);
			\draw[arr] (B) -- node[swap]{$(\xi_2^2+\xi_3^2)$} +(0,-\db);
			\draw[arr] (Pf) -- node{$\xi_2^2$} +(0,-\db);
			\coordinate (R32) at (0,0);
		\end{scope}

		\def\ds{0.07} % shift down of guidelines for legibility
		\def\dw{2.7} % width of guides past centerline

		\draw[red,densely dashed] ($(R11)+(-\dw,-\ds)$) coordinate (t0) -- ($(R12)+(\dw,-\ds)$);
		\draw[red,densely dashed] ($(R21)+(-\dw,-\ds)$) coordinate (t1) -- ($(R22)+(\dw,-\ds)$);
		\draw[red,densely dashed] ($(R31)+(-\dw,-\ds)$) coordinate (t2) -- ($(R32)+(\dw,-\ds)$);
		\node[timetag] at (t0) {$k=0$};
		\node[timetag] at (t1) {$k=1$};
		\node[timetag] at (t2) {$k=2$};
	\end{tikzpicture}
	\caption{Block diagrams representing \cref{algo5} (left) and \cref{algo6} (right). These algorithms are \emph{shift equivalent} because when suitably initialized, they make the same calls to the oracles, albeit with a time shift. The updates are exactly the same for both algorithms, but using transformed variables.}
	\label{fig:algo_unroll_compare}
\end{figure}

This example motivates us to define \emph{shift equivalence}. As with oracle equivalence, we ask whether the algorithms are indistinguishable from the point of view of the oracle for suitably chosen input channel delays and state initializations. Before we define shift equivalence, we will formalize the notion of shifting.

Shifting (delaying) a semi-infinite sequence $(y^0,y^1,\dots)$ by $m$ time steps corresponds to multiplication of its $z$-transform by $z^{-m}$:
\begin{align*}
	\mathcal{Z}\Bigl[(y^0,y^1,y^2,\dots)\Bigr]
&=
\sum_{k=0}^\infty y^k z^{-k}
= \hat y(z), \quad\text{and} \\
\mathcal{Z}\Bigl[(\underbrace{0,0,\dots,0}_{\textup{$m$ times}},y^0,y^1,y^2,\dots)\Bigr]
&=
\sum_{k=0}^\infty y^{k}z^{-m-k}
= z^{-m} \hat y(z).
\end{align*}
Without loss of generality, we can assume $\phi(0)=0$,\footnote{If $\phi(0)\neq 0$, we can redefine the shift operation to pad the first $m$ entries with $\phi(0)$ instead of $0$, which will ensure that $\phi$ commutes with the shift operation. Alternatively, we can re-center the algorithm states to be measured with respect to a fixed point of the dynamics, as is standard in control theory \cite[\S4.2]{michalowsky2021robust}.} and since the oracle $\phi$ applies element-wise to each $y^k$, the oracle $\phi$ commutes with the shift operation.
We can represent this relationship by a commutative diagram; see \cref{fig:commutative_diagram}.

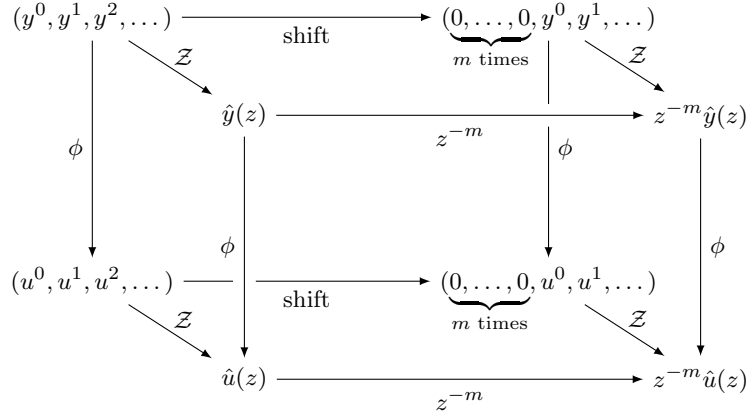
\begin{figure}[ht!]
	\centering
	\begin{tikzpicture}[font=\small,auto]
		\tikzstyle{bx} = []
		\tikzstyle{arr} = [>=latex,->]
		\def\dx{6}
		\def\dy{3.5}
		
		\begin{scope}
		\node[bx] (a) {$(y^0,y^1,y^2,\dots)$};
		\node[bx] (b) at (\dx,0) {\phantom{$(0,\dots,0,y^0,y^1,\dots)$}};
		\node[bx,anchor=north west] (bb) at (b.north west) {$(\underbrace{0,\dots,0}_{\textup{$m$ times}},y^0,y^1,\dots)$};
		\node[bx] (c) at (0,-\dy) {$(u^0,u^1,u^2,\dots)$};
		\node[bx] (d) at (b |- c) {\phantom{$(0,\dots,0,u^0,u^1,\dots)$}};
		\node[bx,anchor=north west] (dd) at (d.north west) {$(\underbrace{0,\dots,0}_{\textup{$m$ times}},u^0,u^1,\dots)$};
		\draw[arr] (a) -- node[below]{shift} (b);
		\draw[arr] (c) -- node[below]{shift} (d);
		\draw[arr] (a) -- node[left]{$\phi$} (c);
		\draw[arr] (b) -- node[right]{$\phi$} (d);
		\end{scope}
		
		\begin{scope}[shift={(2,-1.3)}]
			\node[bx] (a2) {$\hat y(z)$};
			\node[bx] at (\dx,0) (b2) {$z^{-m} \hat y(z)$};
			\node[bx] at (0,-\dy) (c2) {$\hat u(z)$};
			\node[bx] (d2) at (b2 |- c2) {$z^{-m}\hat u(z)$};
			\draw[white,line width=3mm] (a2) -- (b2);
			\draw[arr] (a2) -- node[below]{$z^{-m}$} (b2);
			\draw[arr] (c2) -- node[below]{$z^{-m}$} (d2);
			\draw[white,line width=3mm] (a2) -- (c2);
			\draw[arr] (a2) -- node[left]{$\phi$} (c2);
			\draw[arr] (b2) -- node[right]{$\phi$} (d2);
		\end{scope}

		\draw[arr] (a) -- node[inner sep=1pt]{$\mathcal{Z}$} (a2);
		\draw[arr] (b) -- node[inner sep=1pt]{$\mathcal{Z}$} (b2);
		\draw[arr] (c) -- node[inner sep=1pt]{$\mathcal{Z}$} (c2);
		\draw[arr] (d) -- node[inner sep=1pt]{$\mathcal{Z}$} (d2);
	\end{tikzpicture}
	\caption{Commutative diagram visualizing that the shift (delay) operation commutes with the application of the oracle $\phi$. The foreground shows the $z$-transformed versions of the signals, where the shift becomes multiplication by a power of $z^{-1}$.}
	\label{fig:commutative_diagram}
\end{figure}

For a vector-valued signal, we can delay each component by a different amount. This motivates the definition of the \emph{multi-shift}.
\medskip
\begin{definition}[multi-shift]\label{delay}
	We define the \emph{multi-shift} $\hat \Delta_m(z)$ for nonnegative integers $m \defeq (m_1,\dots,m_p)$ as the transfer function
	\[
		\hat \Delta_m(z)
		\defeq \bmat{z^{-m_1} & & 0 \\ & \ddots & \\
		0 & & z^{-m_p}}
	\]
\end{definition}
\medskip

For example, consider the semi-infinite sequence $(y^0,y^1,y^2,\dots)$, where each $y^k$ is partitioned into blocks $y^k_1,y^k_2,y^k_3$, where the $y^k_i$ are the same size for all $k$. Then,
	\[
		\hat\Delta_{(1,0,2)}(z) \hat y(z) = \mathcal{Z}\left[\left(
			\bmat{0    \\y_2^0\\0    },
			\bmat{y_1^0\\y_2^1\\0    },
			\bmat{y_1^1\\y_2^2\\y_3^0},
			\bmat{y_1^2\\y_2^3\\y_3^1},
			\dots,
			\bmat{y_1^{k-1}\\y_2^{k\phantom{+0}}\\y_3^{k-2}},
			\dots
			\right)\right].
	\] 
The multi-shift also commutes with any time-invariant oracle $\Phi = (\phi_1,\dots,\phi_p)$. In terms of $z$-transforms, $\Phi(\hat y) = \hat \Delta_m^{-1} \Phi(\hat \Delta_m \hat y)$. By rearranging the block diagram, we can move the multi-shifts from the oracle to the algorithm; see \cref{fig_shift_transformation}.

\begin{figure}[ht]
	\centering
	\begin{tikzpicture}[font=\small]
		\tikzstyle{blk}=[minimum width=8mm,minimum height=7mm,draw]
		\tikzstyle{arr}=[>=latex,auto,->,rounded corners]
		
		\begin{scope}
		\node[blk] at (0, 0) (algo) {$\hat H$};
		\node[blk,below= 0.3 of algo] (oracle) {$\Phi$};
		\coordinate (t1) at ($(algo.east) + (0.5,0)$);
		\coordinate (t2) at ($(algo.west) + (-0.5,0)$);
		\draw[arr]  (algo) -- (t1) |- (oracle);
		\draw[arr]  (oracle) -| (t2) -- (algo);
		\end{scope}

		\begin{scope}[shift={(3,0)}]
			\node[blk] at (0, 0) (algo) {$\hat H$};
			\node[blk,below= 0.3 of algo] (oracle) {$\hat \Delta_m^{-1} \Phi \hat\Delta_m$};
			\coordinate (t1) at ($(oracle.east) + (0.5,0)$);
			\coordinate (t2) at ($(oracle.west) + (-0.5,0)$);
			\draw[arr]  (algo) -| (t1) -- (oracle);
			\draw[arr]  (oracle) -- (t2) |- (algo);
		\end{scope}

		\begin{scope}[shift={(6.5,0)}]
			\node[blk] at (0, 0) (algo) {$\hat H$};
			\node[blk] at (0,-1.5) (oracle) {$\Phi$};
			\node[blk] at (1.1,-0.75) (D1) {$\hat\Delta_m$};
			\node[blk] at (-1.1,-0.75) (D2) {$\hat\Delta_m^{-1}$};
			\draw[arr]  (algo) -| (D1);
			\draw[arr]  (D1) |- (oracle);
			\draw[arr]  (oracle) -| (D2);
			\draw[arr]  (D2) |- (algo);
		\end{scope}

		\begin{scope}[shift={(10,0)}]
			\node[blk] at (0, 0) (algo) {$\hat\Delta_m\hat H\hat\Delta_m^{-1}$};
			\node[blk,below= 0.3 of algo] (oracle) {$\Phi$};
			\coordinate (t1) at ($(algo.east) + (0.5,0)$);
			\coordinate (t2) at ($(algo.west) + (-0.5,0)$);
			\draw[arr]  (algo) -- (t1) |- (oracle);
			\draw[arr]  (oracle) -| (t2) -- (algo);
		\end{scope}

	\end{tikzpicture}
	\caption{Equivalent block diagram representing shift equivalence. We use the fact that the oracle $\Phi$ commutes with any multi-shift $\hat\Delta_m$. However, $\hat\Delta_m$ need not commute with $\hat H$, which means equivalent algorithms can have different transfer functions.}
	\label{fig_shift_transformation}
\end{figure}
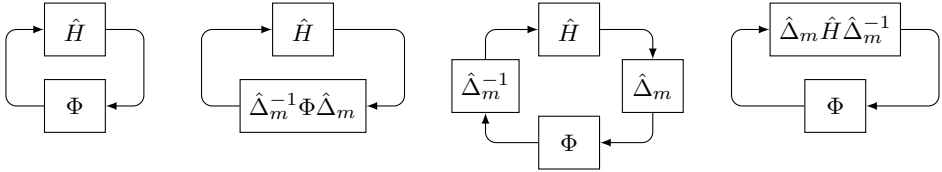

The transformation in \cref{fig_shift_transformation} shows that from the point of view of the oracle $\Phi$, the algorithms $\hat H$ and $\hat \Delta_m \hat H \hat \Delta_m^{-1}$ are indistinguishable. This motivates our definition of \emph{shift equivalence}.

\medskip
\begin{definition}[shift equivalence]\label{def2}
	Suppose we are given two LTI algorithms that each use the same $p$ oracles and have transfer functions $\hat H_1$ and $\hat H_2$, respectively. We say they are \emph{shift-equivalent} and write $\hat H_1 \sim \hat H_2$ if there exists a multi-shift $\hat \Delta_m$ such that
		\[
			\hat H_1 = \hat\Delta_m \hat H_2 \hat\Delta_m^{-1}.
		\]
\end{definition}
\medskip

Our choice of the word \emph{equivalence} is justified by the fact that shift equivalence is an \emph{equivalence relation}.

\medskip
\begin{lemma}\label{lem:shift_equiv}
	Shift equivalence, as defined in \cref{def2}, is an \emph{equivalence relation}. That is, it satisfies the properties of reflexivity, symmetry, and transitivity.
\end{lemma}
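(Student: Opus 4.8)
The plan is to verify the three defining properties of an equivalence relation directly from \cref{def2}, relying only on two elementary facts about multi-shifts: they are diagonal, and they compose additively, i.e.\ $\hat\Delta_m \hat\Delta_{m'} = \hat\Delta_{m+m'}$, with $\hat\Delta_0 = I$. \emph{Reflexivity} is immediate on taking $m=(0,\dots,0)$, since then $\hat\Delta_m = I$ and $\hat H = \hat\Delta_m \hat H \hat\Delta_m^{-1}$. \emph{Transitivity} follows by substitution: if $\hat H_1 = \hat\Delta_m \hat H_2 \hat\Delta_m^{-1}$ and $\hat H_2 = \hat\Delta_{m'} \hat H_3 \hat\Delta_{m'}^{-1}$ with $m,m'$ nonnegative integer vectors, then $\hat H_1 = \hat\Delta_m\hat\Delta_{m'}\hat H_3\hat\Delta_{m'}^{-1}\hat\Delta_m^{-1} = \hat\Delta_{m+m'}\hat H_3\hat\Delta_{m+m'}^{-1}$, and $m+m'$ is again a nonnegative integer vector, so $\hat H_1 \sim \hat H_3$.

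The only step that needs genuine care is \emph{symmetry}, and the reason is that the inverse $\hat\Delta_m^{-1}$ of a multi-shift is $\hat\Delta_{-m}$, which is not itself a multi-shift whenever $m\neq 0$ — its diagonal entries $z^{m_i}$ are monomials in $z$, not powers of $z^{-1}$. So from $\hat H_1 = \hat\Delta_m \hat H_2 \hat\Delta_m^{-1}$ one cannot immediately conclude $\hat H_2 \sim \hat H_1$ by reading off $\hat H_2 = \hat\Delta_{-m}\hat H_1\hat\Delta_{-m}^{-1}$. The fix is to re-center the delay vector: set $\bar m \defeq \max_i m_i$ and $m' \defeq (\bar m - m_1,\dots,\bar m - m_p)$, which \emph{is} a nonnegative integer vector. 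Then $\hat\Delta_{m'} = z^{-\bar m}\hat\Delta_m^{-1}$, so the scalar factor $z^{-\bar m}$ pulls out of the conjugation and cancels against its inverse:
\[
\hat\Delta_{m'}\hat H_1\hat\Delta_{m'}^{-1} = \hat\Delta_m^{-1}\hat H_1\hat\Delta_m = \hat\Delta_m^{-1}\bigl(\hat\Delta_m\hat H_2\hat\Delta_m^{-1}\bigr)\hat\Delta_m = \hat H_2,
\]
which is exactly the relation $\hat H_2 \sim \hat H_1$ required by \cref{def2} (and $\hat H_2$ is a transfer function of an algorithm using the same $p$ oracles by hypothesis, so the relation is well-posed).

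Combining the three verifications proves the lemma. I expect the symmetry argument to be the only nonobvious point — specifically, the observation that one must add a common constant to all the delays to land back inside the class of multi-shifts; once that re-centering is in place, everything reduces to bookkeeping with diagonal matrices and the additivity $\hat\Delta_m\hat\Delta_{m'} = \hat\Delta_{m+m'}$.
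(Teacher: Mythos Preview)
Your proof is correct and follows essentially the same approach as the paper: reflexivity via the identity multi-shift, transitivity via composition $\hat\Delta_m\hat\Delta_{m'}=\hat\Delta_{m+m'}$, and symmetry via the re-centering trick of multiplying by $z^{-\bar m}$ with $\bar m=\max_i m_i$ so that $\hat\Delta_m^{-1}z^{-\bar m}=\hat\Delta_{m'}$ becomes a valid multi-shift. Your emphasis on why symmetry requires this extra step (the nonnegativity constraint on the delay vector) is well placed and matches the paper's argument exactly.
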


\begin{proof} See \cref{app:equivalence1}.
\end{proof}

\medskip
\begin{remark}
	Oracle equivalence is a special case of shift equivalence (with $\hat \Delta_m = I$). Moreover,
	shift equivalence reduces to oracle equivalence when $p=1$ (a single oracle). In this case, the transfer functions and multi-shifts are scalars rather than matrices, so they trivially commute:
	$z^{-m}\hat H_1 = \hat H_2 z^{-m} \iff \hat H_1 = \hat H_2$.
\end{remark}

\titleparagraph{Efficient enumeration of shift-equivalent algorithms}

Given an algorithm $\hat H$ using oracles $\Phi=(\phi_1,\dots,\phi_p)$, how can we generate all possible shift-equivalent algorithms? In other words, what are the possible transfer functions $\hat H'$ and multi-shifts $\hat \Delta_m$ such that
\[
\hat H'(z) =
\bmat{z^{-m_1} & & 0 \\ & \ddots & \\
		0 & & z^{-m_p}} \hat H \bmat{z^{m_1} & & 0 \\ & \ddots & \\
		0 & & z^{m_p}}.
\]
Since $\hat H'$ must be a proper transfer function (or strictly proper, depending on
whether we require explicit implementations; see \cref{app:construction}), each $\hat H_{ij}(z) z^{m_j-m_i}$ must also be proper. Here, \emph{proper} means that each entry is a rational function whose
numerator degree is no larger than its denominator degree, while \emph{strictly proper}
means the numerator degree is strictly smaller. To determine the set of possible
$\hat H'$, let $r_{ij}$ be the relative degree of $\hat H_{ij}$.
That is, write $\hat H_{ij}(z) = N_{ij}(z) / D_{ij}(z)$ (ratio of polynomials), and
\[
r_{ij} \defeq \begin{cases}
+\infty & \text{if }\hat H_{ij}(z) = 0 \\
\deg(D_{ij}) -\deg(N_{ij}) & \text{otherwise}
\end{cases}
\]
Then, properness of $\hat H'$ amounts to finding $m_i$ such that
\begin{equation}\label{eq:bounds}
-r_{ij} \leq m_i - m_j \leq r_{ji}\qquad\text{for all }i \neq j.
\end{equation}
Since the set of feasible $m_i$ is translation-invariant, we can normalize each solution so that $\min_i m_i = 0$, and each distinct solution $\{m_i\}$ will correspond to a distinct $\hat H'$ that is shift-equivalent to $\hat H$. For an example of how we can enumerate solutions, see the primal-dual three-operator splitting example in \cref{app-shift}.

\titleparagraph{Efficient determination of shift equivalence}

Given two algorithms $\hat H$ and $\hat H'$ that use the same oracles $(\phi_1,\dots,\phi_p)$, we can efficiently check whether these algorithms are shift-equivalent by carrying out the following steps.
\begin{enumerate}
	\item Check to make sure $\hat H_{ii} = H'_{ii}$ for all $i$ (the diagonal entries must always match). Otherwise, they are not equivalent.
	\item Check to make sure that for all $i \neq j$, either $\hat H_{ij} = \hat H'_{ij} = 0$, or $H_{ij} \neq 0$ and $\hat H'_{ij} \neq 0$. In other words, $\hat H$ and $\hat H'$ must have matching sparsity patterns. Otherwise, they are not equivalent.
	\item For all $i \neq j$ such that $\hat H_{ij} \neq 0$, check to make sure that $H'_{ij}(z) / \hat H_{ij}(z) = z^{b_{ij}}$ for some integers $b_{ij}$. In other words, corresponding entries of the algorithm's transfer functions must by related by multiplication by a power of $z$.
	\item Consider the set of linear equations $b_{ij} = m_j - m_i$ for all $i\neq j$ such that $\hat H_{ij} \neq 0$. Write the corresponding linear equations compactly as $M^\top m = b$. If this system of equations has a solution, then $\hat H \sim \hat H'$. Otherwise, they are not equivalent. Note that if a solution exists, we can always find a solution with integer $m$, since $M$ is an incidence matrix and therefore is totally unimodular.
\end{enumerate}

\subsection{Examples of shift equivalence}\label{app-shift}

\titleparagraph{Douglas--Rachford splitting and ADMM}

As computed in \cref{ex:shift}, the transfer functions for \cref{algo5,algo6} are given by
$\hat H_{\ref{algo5}} = \bmat{
		\frac{-1}{z-1} & \frac{1}{z-1} \\
		\frac{2 z-1}{z-1} & \frac{-1}{z-1}}$
and
$\hat H_{\ref{algo6}} = \bmat{
		\frac{-1}{z-1} & \frac{z}{z-1} \\
		\frac{2 z-1}{z(z-1)} & \frac{-1}{z-1}}$.
We see that they are related via
\(
\hat H_{\ref{algo5}} = \bmat{z^{-1} & 0 \\ 0 & 1}\hat H_{\ref{algo6}}\bmat{z & 0 \\ 0 & 1}
\).
Therefore, $\hat H_{\ref{algo5}}\sim \hat H_{\ref{algo6}}$.

\titleparagraph{Primal-dual three-operator splitting}

For our next example, we consider algorithms for solving the optimization problem
\begin{equation*}
\ba{ll}
\mbox{minimize} & f(x) + g(Ax) + h(x)
\ea
\end{equation*}
using the oracles $\text{prox}_{\tau f}$, $\text{prox}_{\sigma g^*}$, and $\nabla h$. Only recently have methods been proposed to solve this problem. Examples include the Condat--V\~{u} algorithm independently proposed by Condat and V\~{u} \cite{Condat,Vu}, the primal-dual three-operator (PD3O) algorithm  \cite{PD3O}, and the primal-dual Davis--Yin (PDDY) algorithm \cite{PDDY}. See \cite{jiang2023bregman} and references therein for a recent survey on this problem.
To illustrate our approach, we will focus on the primal-dual three-operator algorithm (PD3O)~\cite{PD3O}, shown below.

\vspace{-1em}
\begin{algorithm}[H]
	\centering
	\captionsetup{font=scriptsize}
	\caption{Primal-dual three-operator splitting (PD3O)}
	\label{algoPD3O}
	\scriptsize
	\begin{algorithmic}
		\For{$k=0, 1, 2,\ldots$}
		\State{$x^{k} = \prox_{\tau f}(z^k)$}
		\State{$s^{k+1} = \prox_{\sigma g^*}\Bigl((I-\tau\sigma AA^\top )s^k + \sigma A\bigl(2x^k-z^k-\tau \nabla h(x^k)\bigr)\Bigr)$}
		\State{$z^{k+1} = x^k - \tau \nabla h(x^k)-\tau A^\top  s^{k+1}$}
		\EndFor
	\end{algorithmic}
\end{algorithm}
\vspace{-1em}
\noindent The state-space realization and transfer function for PD3O is\footnote{We have removed identity matrices from the transfer function to simplify exposition. For example, entries in $\hat H_{\ref{algoPD3O}}(z)$ that read $\frac{1}{z}$ should be replaced by $\frac{1}{z} I$.}
\[
\hat H_{\ref{algoPD3O}}(z) = \left[\begin{array}{cc|ccc}
	0 & 0 & 0 & I & 0 \\
	0 & 0 & I & -\tau A^\top  & -\tau I \\ \hline
	0 & I & 0 & 0 & 0 \\
	I-\tau \sigma AA^\top  & -\sigma A & 2\sigma A & 0 & -\tau\sigma A \\
	0 & 0 & I & 0 & 0
\end{array}\right]
= \bmat{\frac{1}{z} & \frac{-\tau A^\top  }{z} & \frac{-\tau }{z} \\
	 \frac{\sigma  (2 z-1)A}{z} & \frac{1}{z} & \frac{-\sigma  \tau  (z-1) A}{z} \\
	 1 & 0 & 0 }
\]
In \cite{PD3O}, the authors show a reformulation of PD3O and state that it was obtained by changing the order of the variables and substituting $\bar x^k = 2x^k -z^k - \tau \nabla h(x^k)-\tau A^\top  s^k$. After these changes, the reformulation is given by \cref{algoPD3Ob} below.

\vspace{-1em}
\noindent
\hfil
\begin{minipage}{0.56\linewidth}
\begin{algorithm}[H]
	\centering
	\captionsetup{font=scriptsize}
	\caption{Reformulation of PD3O}
	\label{algoPD3Ob}
	\scriptsize
	\begin{algorithmic}
		\For{$k=0, 1, 2,\ldots$}
		\State{$s^{k+1} = \prox_{\sigma g^*}(s^k + \sigma A\bar x^k)$}
		\State{$x^{k+1} = \prox_{\tau f}(x^k - \tau\nabla h(x^k)-\tau A^\top  s^{k+1})$}
		\State{$\bar x^{k+1} = 2x^{k+1} -x^k + \tau \nabla h(x^k)-\tau \nabla h(x^{k+1})$}
		\EndFor
	\end{algorithmic}
\end{algorithm}
\end{minipage}
\hfil
\begin{minipage}{0.42\linewidth}
	\[
		\hat H_{\ref{algoPD3Ob}} =
		\addtolength{\arraycolsep}{-1mm}
		\bmat{\frac{1}{z} & -\tau A^\top  & \frac{-\tau }{z} \\
		\frac{\sigma  (2 z-1)A}{z^2} & \frac{1}{z} & \frac{-\sigma  \tau  (z-1) A}{z^2} \\
		1 & 0 & 0 }
	\]
\end{minipage}
\hfil
\vspace{1em}

We can obtain the equivalence between \cref{algoPD3O,algoPD3Ob} immediately. Indeed,  \cref{algoPD3O,algoPD3Ob} are shift equivalent because
\[
	\hat H_{\ref{algoPD3Ob}}
	=
	\bmat{1 & 0 & 0\\0 & z^{-1} & 0\\0 & 0 & 1}
	\hat H_{\ref{algoPD3O}}
\bmat{1 & 0 & 0\\0 & z & 0\\0 & 0 & 1}.
\]
This is not the only possible shift-equivalence transformation. Applying the method outlined in \cref{eq:bounds}, the relative degree matrix of $\hat H_{\ref{algoPD3O}}$ is given by
\[
[r_{ij}] = \bmat{1 & 1 & 1 \\ 0 & 1 & 0 \\ 0 & \infty & \infty}.
\]
We seek nonnegative integers $(m_1,m_2,m_3)$ normalized so that $\min_i m_i=0$ satisfying
\begin{align*}
0 &\leq m_2 - m_1 \leq 1,  &
0 &\leq m_3 - m_1 \leq 1,  &
0 &\leq m_2 - m_3 \leq \infty. 
\end{align*}
\cref{algoPD3O} corresponds to the trivial solution $(0,0,0)$, and \cref{algoPD3Ob} corresponds to $(0,1,0)$. By inspection, we see there are three solutions. The third solution is $(0,1,1)$ and it corresponds to the new algorithm
\[
	\hat H_{\ref{algoPD3Oc}}
	=	
	\bmat{1 & 0 & 0\\0 & z^{-1} & 0\\0 & 0 & z^{-1}}
	\hat H_{\ref{algoPD3O}}
	\bmat{1 & 0 & 0\\0 & z & 0\\0 & 0 & z}.
\]
One possible realization of \cref{algoPD3Oc} is given below.

\vspace{-1em}
\noindent
\hfil
\begin{minipage}{0.53\linewidth}
\begin{algorithm}[H]
	\centering
	\captionsetup{font=scriptsize}
	\caption{Another reformulation of PD3O}
	\label{algoPD3Oc}
	\scriptsize
	\begin{algorithmic}
		\For{$k=0, 1, 2,\ldots$}
		\State{$y^k = \prox_{\sigma g^*}\bigl(\sigma A(2w^k-\tau\nabla h(w^k))-x^k\bigr)$}
		\State{$x^{k+1} = \sigma A\bigl(w^k - \tau\nabla h(w^k)\bigr) - y^k $}
		\State{$w^{k+1} = \prox_{\tau f}(x^{k+1})$}
		\EndFor
	\end{algorithmic}
\end{algorithm}
\end{minipage}
\hfil
\begin{minipage}{0.45\linewidth}
	\[
		\hat H_{\ref{algoPD3Oc}} =
		\addtolength{\arraycolsep}{-1mm}
		\bmat{ \frac{1}{z} & -\tau A^\top   & -\tau  \\
		 \frac{\sigma  (2 z-1)A}{z^2} & \frac{1}{z} & \frac{-\sigma  \tau  (z-1)A}{z} \\
		 \frac{1}{z} & 0 & 0 }
	\]
\end{minipage}
\hfil
\vspace{1em}

\noindent We stress that these equivalences are tedious to work out by hand, and since there are now three oracles, the equivalences are far from obvious. For example, here is another way to realize \cref{algoPD3Ob}. This time, the transfer functions are the same, so \cref{algoPD3Ob,algoPD3Od} are oracle-equivalent.

\vspace{-1em}
\noindent
\hfil
\begin{minipage}{0.53\linewidth}
\begin{algorithm}[H]
	\centering
	\captionsetup{font=scriptsize}
	\caption{Yet another reformulation of PD3O}
	\label{algoPD3Od}
	\scriptsize
	\begin{algorithmic}
		\For{$k=0, 1, 2,\ldots$}
		\State{$y^k = \prox_{\sigma g^*}(2\sigma A w^k - x^k)$}
		\State{$z^k = \prox_{\tau f}(w^k - \tau A^\top  y^k)$}
		\State{$x^{k+1} = \sigma A\bigl( w^k - \tau \nabla h(z^k) \bigr) - y^k$}
		\State{$w^{k+1} = z^k - \tau\nabla h(z^k)$}
		\EndFor
	\end{algorithmic}
\end{algorithm}
\end{minipage}
\hfil
\begin{minipage}{0.45\linewidth}
	\[
		\hat H_{\ref{algoPD3Od}} =
		\addtolength{\arraycolsep}{-1mm}
		\bmat{\frac{1}{z} & -\tau A^\top  & \frac{-\tau }{z} \\
		\frac{\sigma  (2 z-1)A}{z^2} & \frac{1}{z} & \frac{-\sigma  \tau  (z-1) A}{z^2} \\
		1 & 0 & 0 }
	\]
\end{minipage}
\hfil
\vspace{1em}

%%%%%%%%%%%%%%%%%%%%%%%%%%%%%%%%%%%%%%%%%%%%%%%%%%%%%%%%%%%%%%%%%%%%%%%%%%%%%%%%%%%%%%%%%%%

\section{LFT equivalence}\label{lft-equ}

In \cref{oracle-equ,shift-equ}, we considered equivalence between algorithms that use the same oracles. In this section, we consider equivalence between algorithms that use \emph{different but related} oracles.

In convex optimization,
algorithm conjugation naturally relates some oracles to others \cite{ryu2016primer}\cite[\S2]{ryuyinconvex}: % should be \cites here
for example, if
$(\partial f)(x) \defeq \left\{ g \;\mid\; f(y) \geq f(x) +g^\top (y-x)\text{ for all }y\right\}$ is the subdifferential of $f$,
$\prox_f(v) \defeq \argmin_x\bigl( f(x) + \frac{1}{2}\norm{x-v}^2\bigr)$ is the proximal operator of $f$, and 
$f^*(y) \defeq \sup_x \{x^\top y - f(x)\}$ is the Fenchel conjugate of $f$ \cite[\S3]{fenchel1953convex}, we have the following identities relating the different operators.
\begin{itemize}
	\item $y \in \partial f(x) \quad\iff\quad x \in \partial f^*(y)$, %(\partial f)^{-1} = \partial f^*$, and 
	\item $y = \prox_{tf}(x) \quad\iff\quad x \in y + t\partial f(y)$
	\item $x = \prox_{tf}(x) + t \prox_{\frac{1}{t}f^*}(\tfrac{1}{t}x)$\quad (Moreau's identity)
\end{itemize}
We can rewrite any algorithm in terms of different, also easily computable,
oracles using these identities.
Consider a simple example: we will obfuscate
the proximal gradient method (\cref{algo11} \cite[\S10]{doi:10.1137/1.9781611974997}\cite{doi:10.1137/080716542})
by rewriting it in terms of the conjugate of the original oracle $\prox_g$,
using Moreau's identity, as \cref{algo12}~\cite{moreau:hal-01867187}. These are the same as our motivating examples of \cref{algo11x,algo12x}.

\vspace{-1em}
\noindent
\hfil
\begin{minipage}{0.47\linewidth}
\begin{algorithm}[H]
	\centering
	\captionsetup{font=scriptsize}
	\caption{Proximal gradient}
	\label{algo11}
	\scriptsize
	\begin{algorithmic}
		\For{$k=0, 1, 2,\ldots$}
		\State{$y^k = x^k-t \nabla f(x^k)$}
		\State{$x^{k+1} = \prox_{tg}(y^k)\vphantom{\prox_{\frac{1}{t}g^*}(\frac{1}{t}y^k)}$}
		\EndFor
	\end{algorithmic}
\end{algorithm}
\end{minipage}
\hfil
\begin{minipage}{0.47\linewidth}
	\begin{algorithm}[H]
	\centering
	\captionsetup{font=scriptsize}
	\caption{Conjugate proximal gradient}
	\label{algo12}
	\scriptsize
	\begin{algorithmic}
		\For{$k=0, 1, 2,\ldots$}
		\State{$y^k = x^k - t\nabla f(x^k)$}
		\State{$x^{k+1} =  y^k -
			t\prox_{\frac{1}{t}g^*}(\frac{1}{t}y^k)$}
		\EndFor
	\end{algorithmic}
\end{algorithm}
\end{minipage}
\hfil
\vspace{1em}

\noindent We can also use the relationship between the proximal and subdifferential operators to obtain versions of \cref{algo11,algo12} that use subdifferentials instead.

\vspace{-1em}
\noindent
\hfil
\begin{minipage}{0.47\linewidth}
\begin{algorithm}[H]
	\centering
	\captionsetup{font=scriptsize}
	\caption{Subdifferential}
	\label{algo11b}
	\scriptsize
	\begin{algorithmic}
		\For{$k=0, 1, 2,\ldots$}
		\State{$y^k = x^k-t \nabla f(x^k)$}
		\State{$x^{k+1} \in y^k - t \partial g(\vphantom{\frac{1}{t}}x^{k+1})$}
		\EndFor
	\end{algorithmic}
\end{algorithm}
\end{minipage}
\hfil
\begin{minipage}{0.47\linewidth}
	\begin{algorithm}[H]
	\centering
	\captionsetup{font=scriptsize}
	\caption{Conjugate Subdifferential}
	\label{algo12b}
	\scriptsize
	\begin{algorithmic}
		\For{$k=0, 1, 2,\ldots$}
		\State{$y^k = x^k - t\nabla f(x^k)$}
		\State{$x^{k+1} \in  \partial g^*\bigl( \frac{1}{t}(y^k-x^{k+1})\bigr)$}
		\EndFor
	\end{algorithmic}
\end{algorithm}
\end{minipage}
\hfil
\vspace{1em}

Note that the update equations for \cref{algo11b,algo12b} involving subdifferentials are implicit.
The transfer functions for \crefrange{algo11}{algo12b} are shown below, along with their associated oracles. We use the symbol $\algcomp$ to show that an algorithm is used with a given set of oracles.
% the tfs that were here before were for the subdifferentials, not for the proxes.
\begin{equation}\label{eq:conj_tfs}
\begin{aligned}
	\text{Algo.~\ref{algo11}} &:
	\bmat{ 0 & \frac{1}{z} \\[1mm]
		-t & \frac{1}{z}}
		\algcomp (\nabla f, \prox_{tg}), &
	\text{Algo.~\ref{algo12}} &:
	\bmat { \frac{-t}{z-1} & \frac{-t}{z-1} \\[1mm]
		\frac{-z}{z-1} & \frac{-1}{z-1} }
		\algcomp (\nabla f, \prox_{\frac{1}{t}g^*}), \\
	\text{Algo.~\ref{algo11b}} &:
	\bmat{\frac{-t}{z-1} & \frac{-t}{z-1}  \\[1mm]
		\frac{-tz}{z-1} & \frac{-tz}{z-1} }
		\algcomp (\nabla f, \partial g), &
	\text{Algo.~\ref{algo12b}} &:
	\bmat{0 & \frac{1}{z}  \\[1mm]
		-1 & \frac{1-z}{tz} }
		\algcomp (\nabla f, \partial g^*).
\end{aligned}
\end{equation}
Although the transfer functions of the algorithms change when we rewrite the algorithm to call a different oracle, the sequence of states is preserved ($x^k$ and $y^k$ have the same values for all algorithms provided they are initialized the same way). This motivates us to define a general notion of equivalence that applies when two algorithms use different oracles that are related in a particular way.

\medskip
\begin{definition}[Operator graph] Given an oracle $\Phi:\mathcal{V}^p \to \mathcal{V}^p$, we define its \emph{graph} as the set of possible input-output pairs (in the $z$-domain). We adopt the linear algebraic notation $\mathcal{R}$ (range) overloaded as follows:
\[
\mathcal{R}\bmat{I \\ \Phi} \defeq
\left\{ \bmat{ \hat y \\ \Phi(\hat y)} \;\middle|\; \hat y = \mathcal{Z}[y^k],\text{ where } y^k \in \mathcal{V}^p \text{ for }k=0,1,\dots\right\}.
\]
Likewise, given an algorithm $\hat H$, we define its \emph{dual graph} as:
\[
\mathcal{R}\bmat{\hat H \\ I} \defeq
\left\{ \bmat{ \hat H \hat u \\ \hat u} \;\middle|\; \hat u = \mathcal{Z}[u^k],\text{ where } u^k \in \mathcal{V}^p \text{ for }k=0,1,\dots\right\}.
\]
\end{definition}

\medskip

\begin{definition}[Linearly equivalent oracles]\label{def:lin_related_oracles}
	Let $\Phi_1$ and $\Phi_2$ be oracles. We say that $\Phi_1$ is \emph{linearly equivalent} to $\Phi_2$ and we write $\Phi_1 \Msim \Phi_2$, if their graphs are related by an invertible linear transformation $\hat M$.
	In other words, $\Phi_1 \Msim \Phi_2$ if
	\[
	\mathcal{R}\bmat{ I \\ \Phi_1} =
	\hat M\,
	\mathcal{R}\bmat{I \\ \Phi_2}.
	\]
	This is equivalent to saying that:
	\begin{itemize}
		\item If $\hat u_1 = \Phi_1(\hat y_1)$, there exists $\hat y_2,\hat u_2$ such that $\hat u_2 = \Phi_2(\hat y_2)$ and $\bmat{\hat y_1 \\ \hat u_1} = \hat M \bmat{\hat y_2 \\ \hat u_2}$, and \item If $\hat u_2 = \Phi_2(\hat y_2)$, there exists $\hat y_1,\hat u_1$ such that $\hat u_1 = \Phi_1(\hat y_1)$ and $\bmat{\hat y_1 \\ \hat u_1} = \hat M \bmat{\hat y_2 \\ \hat u_2}$.
	\end{itemize}
	We will omit $\hat M$ and simply write $\Phi_1 \sim \Phi_2$ to mean that there exists some invertible $\hat M$ such that $\Phi_1 \Msim \Phi_2$. 
\end{definition}

\medskip

\noindent For example, $\prox_f  \sim \partial f$ because:
\begin{equation}\label{eq:example_prox_eqn}
\begin{cases}
x = y + t\partial f(y)\\
y = \prox_{tf}(x)
\end{cases}
\quad\iff\quad
\bmat{x \\ \prox_{tf}(x)} = \bmat{1 & t \\ 1 & 0}\bmat{y \\ \partial f(y)}.
\end{equation}
Since each $x$ corresponds to some $y$ and vice versa, it also holds for the $z$-transforms of arbitrary sequences $(x^0,x^1,\dots)$ and corresponding $(y^0,y^1,\dots)$ using the same $2\times 2$ matrix.

\medskip
\begin{proposition}(Special cases of linear relations)\label{prop:LFT_properties}
	\begin{enumerate}
		\itemsep=2mm
		\item \emph{Identity:} If $\phi_1 = \phi_2$, then $\phi_1 \Msim \phi_2$ with $\hat M = \sbmat{I & 0 \\ 0 & I}$.
				
		\item \emph{Commutation:} If $\phi(\hat C \hat y) = \hat C \phi(\hat y)$ for all $\hat y$, then $\phi\Msim \phi$ with $\hat M = \sbmat{\hat C & 0 \\ 0 & \hat C}$.
		
		\item \emph{Equivariance:} If $\phi_1(\hat A \hat y) = \hat B \phi_2(\hat y)$ for all $\hat y$, then $\phi_1 \Msim \phi_2$ with $\hat M = \sbmat{\hat A & 0 \\ 0 & \hat B}$.

		\item \emph{Concatenation:} If $\psi_i \overset{\hat M_i}{\sim} \phi_i$ with $\hat M_i = \sbmat{\hat P_i & \hat Q_i \\ \hat R_i & \hat S_i}$ for $i=1,\dots,p$, then\\
		$(\psi_1,\dots,\psi_p) \Mpsim (\phi_1,\dots,\phi_p)$ with
		$\hat M' = \sbmat{\diag(\hat P_i) & \diag(\hat Q_i) \\ \diag(\hat R_i) & \diag(\hat S_i)}$.
	\end{enumerate}
\end{proposition}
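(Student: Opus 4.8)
The plan is to observe that parts 1--3 are all instances of the single \emph{equivariance} statement in part 3, to prove that statement by a direct double-inclusion against the characterization in \cref{def:lin_related_oracles}, and then to reduce part 4 to a block-diagonal version of the same statement via a coordinate permutation. None of the steps is deep: the only real content is setting up the overloaded range notation carefully and checking invertibility of the resulting $\hat M$ matrices.

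For parts 1--3, I would first note that part 1 is equivariance with $\hat A = \hat B = I$ and $\phi_1 = \phi_2$, and part 2 is equivariance with $\hat A = \hat B = \hat C$ and $\phi_1 = \phi_2 = \phi$ (in part 2 one needs $\hat C$ invertible so that $\hat M = \sbmat{\hat C & 0 \\ 0 & \hat C}$ is invertible, as the definition of $\Msim$ demands; likewise $\hat A,\hat B$ invertible in part 3). So it suffices to prove part 3. Taking $\hat M = \sbmat{\hat A & 0 \\ 0 & \hat B}$, I would verify the two bullets of \cref{def:lin_related_oracles}: given $\hat u_1 = \phi_1(\hat y_1)$, set $\hat y_2 \defeq \hat A^{-1}\hat y_1$ and $\hat u_2 \defeq \phi_2(\hat y_2)$, so that $\hat A\hat y_2 = \hat y_1$ and $\hat B\hat u_2 = \hat B\phi_2(\hat A^{-1}\hat y_1) = \phi_1(\hat y_1) = \hat u_1$ by hypothesis, hence $\sbmat{\hat y_1 \\ \hat u_1} = \hat M\sbmat{\hat y_2 \\ \hat u_2}$; conversely, given $\hat u_2 = \phi_2(\hat y_2)$, set $\hat y_1 \defeq \hat A\hat y_2$ and use $\phi_1(\hat A\hat y_2) = \hat B\phi_2(\hat y_2)$ to get the same relation. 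This gives $\phi_1 \Msim \phi_2$, and parts 1 and 2 follow as the stated specializations.

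For part 4, write $\Psi \defeq (\psi_1,\dots,\psi_p)$ and $\Phi \defeq (\phi_1,\dots,\phi_p)$, which act diagonally. I would prove $\mathcal{R}\sbmat{I \\ \Psi} = \hat M'\,\mathcal{R}\sbmat{I \\ \Phi}$ by componentwise bookkeeping. Starting from an element $\sbmat{\hat y \\ \Psi(\hat y)}$, each relation $\psi_i \overset{\hat M_i}{\sim}\phi_i$ produces $\hat y_i'$ and $\hat u_i' = \phi_i(\hat y_i')$ with
\[
\hat y_i = \hat P_i\hat y_i' + \hat Q_i\hat u_i', \qquad \psi_i(\hat y_i) = \hat R_i\hat y_i' + \hat S_i\hat u_i';
\]
stacking over $i$, with $\hat y' \defeq (\hat y_1',\dots,\hat y_p')$ and $\hat u' \defeq \Phi(\hat y')$, yields exactly $\sbmat{\hat y \\ \Psi(\hat y)} = \hat M'\sbmat{\hat y' \\ \Phi(\hat y')}$, and the reverse inclusion comes from running the same computation with the second bullet of \cref{def:lin_related_oracles} for each $i$. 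For invertibility of $\hat M'$, I would conjugate by the permutation that reorders the coordinates from $(\hat y_1,\dots,\hat y_p,\hat u_1,\dots,\hat u_p)$ to $(\hat y_1,\hat u_1,\dots,\hat y_p,\hat u_p)$; since $\hat P,\hat Q,\hat R,\hat S$ are block-diagonal, this conjugation turns $\hat M'$ into $\diag(\hat M_1,\dots,\hat M_p)$, which is invertible because each $\hat M_i$ is. Hence $\Psi \Mpsim \Phi$.

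The main obstacle, such as it is, lies entirely in part 4: keeping the block structure of $\hat M'$ aligned with the diagonal action of the concatenated oracles, and confirming $\hat M'$ is invertible. The interleaving-permutation observation makes both points transparent, so I do not expect genuine difficulty — the rest is routine manipulation of the $z$-domain graphs.
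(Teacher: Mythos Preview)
Your proposal is correct. The paper does not actually supply a proof of this proposition; it is stated as a list of immediate consequences of \cref{def:lin_related_oracles} and left to the reader. Your write-up fills in exactly the verifications one would want: the reduction of parts~1--2 to part~3, the double-inclusion check for equivariance (with the necessary invertibility assumptions on $\hat A,\hat B,\hat C$ made explicit), and the componentwise argument for concatenation. The permutation-conjugation trick you use to see that $\hat M'$ is invertible, by turning it into $\diag(\hat M_1,\dots,\hat M_p)$, is a clean way to handle the only nontrivial bookkeeping point and is more explicit than anything the paper provides.
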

\medskip

When $\Phi_1 \Msim \Phi_2$ and these oracles are used with algorithms $\hat H_1$ and $\hat H_2$, respectively, we must have $\hat y_1 = \hat H_1 \hat u_1$ and $\hat y_2 = \hat H_2 \hat u_2$. Incorporating this with \cref{def:lin_related_oracles}, we can define a natural generalization of equivalence that holds in this setting.

\medskip
\begin{definition}[LFT equivalence]\label{def:LFT_equivalence}
	Consider $\hat H_1 \algcomp \Phi_1$ and $\hat H_2 \algcomp \Phi_2$, where $\Phi_1 \Msim \Phi_2$.
	We say the algorithms are \emph{LFT-equivalent} and write 
	$\hat H_1 \algcomp \Phi_1\Msim\hat H_2 \algcomp \Phi_2$, if
	\begin{equation}\label{eq:def_lft}
	\mathcal{R}\bmat{\hat H_1 \\ I} = \hat M \, 
	\mathcal{R}\bmat{\hat H_2 \\ I}.
	\end{equation}
	We justify the name ``LFT'' in \cref{rem:name_choice} and ``equivalence'' in \cref{rem:equivalence}.

	We omit $\hat M$ and simply write $\hat H_1 \algcomp \Phi_1 \sim \hat H_2 \algcomp \Phi_2$ when $\hat M$ is the same as that for which $\Phi_1 \Msim \Phi_2$, and therefore clear from context.
\end{definition}
\medskip

\begin{remark}\label{rem:equivalence}
	Linear equivalence $\Phi_1 \sim \Phi_2$ and LFT equivalence $\hat H_1 \algcomp \Phi_1 \sim \hat H_2 \algcomp \Phi_2$ satisfy \emph{reflexivity} and \emph{symmetry}, and \emph{transitivity}, so they are \emph{equivalence relations}.
\end{remark}
\medskip

Our main result of this section is an algebraic characterization of LFT equivalence between algorithms defined in \cref{def:LFT_equivalence}.

\medskip
\begin{theorem}[algebraic characterization of LFT equivalence]\label{thm:main_LFT_result}
	Suppose $\Phi_1 \Msim \Phi_2$. 
	Then $\hat H_1 \algcomp \Phi_1 \Msim \hat H_2 \algcomp \Phi_2$ if and only if
	\[
	\bmat{I & -\hat H_1} \hat M \bmat{\hat H_2 \\ I} = 0,
	\quad
	\text{or equivalently,}
	\quad
	\bmat{I & -\hat H_2} \hat M^{-1} \bmat{\hat H_1 \\ I} = 0.
	\]
\end{theorem}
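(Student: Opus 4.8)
The plan is to recast the set-equality \eqref{eq:def_lft} as an identity between column spaces of transfer-function matrices, and then extract the stated bilinear identity by a dimension count. Recall that $\mathcal{R}\sbmat{\hat H_2 \\ I}$ is exactly the set of vectors $\sbmat{\hat H_2 \hat u \\ \hat u}$ as $\hat u$ ranges over all $z$-transforms, i.e.\ the column space of the matrix $\sbmat{\hat H_2 \\ I}$; likewise $\mathcal{R}\sbmat{\hat H_1 \\ I}$ is the column space of $\sbmat{\hat H_1 \\ I}$. The structural facts I would record first are: (i) each matrix $\sbmat{\hat H \\ I}$ has full column rank $p$ and each $\sbmat{I & -\hat H}$ has full row rank $p$, thanks to the embedded identity block; (ii) hence $\operatorname{colsp}\sbmat{\hat H \\ I} = \ker\sbmat{I & -\hat H}$, since the former is contained in the latter (as $\sbmat{I & -\hat H}\sbmat{\hat H \\ I} = 0$) and both have dimension $p$; and (iii) $\hat M$ is invertible, so $\hat M\operatorname{colsp}\sbmat{\hat H_2 \\ I}$ is again a $p$-dimensional subspace.

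Given these, the forward direction is immediate: if $\mathcal{R}\sbmat{\hat H_1 \\ I} = \hat M\,\mathcal{R}\sbmat{\hat H_2 \\ I}$, then $\operatorname{colsp}\!\left(\hat M\sbmat{\hat H_2 \\ I}\right) = \operatorname{colsp}\sbmat{\hat H_1 \\ I} \subseteq \ker\sbmat{I & -\hat H_1}$ by (ii); since a matrix annihilates a subspace iff it annihilates a spanning set of columns, this says $\sbmat{I & -\hat H_1}\hat M\sbmat{\hat H_2 \\ I} = 0$. For the converse, that same identity says $\operatorname{colsp}\!\left(\hat M\sbmat{\hat H_2 \\ I}\right) \subseteq \ker\sbmat{I & -\hat H_1} = \operatorname{colsp}\sbmat{\hat H_1 \\ I}$; by (i) and (iii) both subspaces have dimension $p$, so the inclusion is an equality, which is precisely \eqref{eq:def_lft}. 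The ``or equivalently'' clause then follows from symmetry: applying $\hat M^{-1}$ to both sides of \eqref{eq:def_lft} turns it into $\mathcal{R}\sbmat{\hat H_2 \\ I} = \hat M^{-1}\mathcal{R}\sbmat{\hat H_1 \\ I}$, the LFT-equivalence relation for $\hat H_2\circ\Phi_2$ and $\hat H_1\circ\Phi_1$ with transformation $\hat M^{-1}$ (legitimate because $\Phi_2 \overset{\hat M^{-1}}{\sim}\Phi_1$, by \cref{rem:equivalence}); re-running the argument above with the roles of $1$ and $2$ interchanged and $\hat M$ replaced by $\hat M^{-1}$ characterizes it as $\sbmat{I & -\hat H_2}\hat M^{-1}\sbmat{\hat H_1 \\ I} = 0$.

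The delicate point is the identification ``$\mathcal{R}\sbmat{\hat H \\ I} = $ column space of $\sbmat{\hat H \\ I}$'' together with the use of dimension counts: over genuinely causal $z$-transforms, multiplication by factors such as $z$ (which occur in $\hat M^{-1}$ and in multi-shifts) is not surjective, so to make the dimension argument rigorous one should view all transfer-function entries and the entries of $\hat M$ as elements of the field $\mathbb{R}(z)$ of rational functions — equivalently, allow formal Laurent series — under which the ranges $\mathcal{R}[\cdot]$ behave as honest $p$-dimensional $\mathbb{R}(z)$-subspaces. I expect this bookkeeping (and confirming that it does not enlarge the notion of LFT equivalence relative to the causal model) to be the only real obstacle; the block-matrix algebra and the symmetry under $\hat M \mapsto \hat M^{-1}$ are routine.
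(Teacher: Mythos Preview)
Your proposal is correct, and the forward direction is essentially identical to the paper's. The converse, however, is handled differently. You invoke a dimension count over the field $\mathbb{R}(z)$: since $\hat M\sbmat{\hat H_2\\I}$ has full column rank $p$ and its column space sits inside the $p$-dimensional kernel of $\sbmat{I & -\hat H_1}$, the inclusion must be an equality. The paper instead avoids any appeal to dimensions by a block-matrix trick: it embeds the hypothesis as the $(1,2)$ block of
\[
\bmat{I & -\hat H_1 \\ 0 & I}\hat M \bmat{I & \hat H_2 \\ 0 & I} = \bmat{\star & 0 \\ \star & \star},
\]
inverts both sides (the right-hand side stays block lower-triangular), and reads off the companion identity $\sbmat{I & -\hat H_2}\hat M^{-1}\sbmat{\hat H_1\\I}=0$ directly. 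With both identities in hand, the paper then establishes the two set inclusions elementwise, never leaving the space of genuine $z$-transforms.

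The trade-off is exactly the one you flag at the end: your argument is shorter and more conceptual, but it requires reinterpreting $\mathcal{R}[\cdot]$ as an $\mathbb{R}(z)$-subspace rather than a set of formal power series, and you would need a line justifying that this change of ambient space does not alter the equivalence being characterized. The paper's route is slightly longer but sidesteps that bookkeeping entirely, and as a bonus it derives the ``or equivalently'' clause as an intermediate step rather than by a separate symmetry argument.
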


\begin{proof}
See \cref{app:equivalence2}.
\end{proof}

We can apply \cref{thm:main_LFT_result} to solve for $\hat H_1$ in terms of $\hat H_2$ or vice versa.

\medskip
\begin{corollary}\label{cor:main_LFT_result}
	Consider the setting of \cref{thm:main_LFT_result} with $\hat M = \sbmat{\hat P & \hat Q \\ \hat R & \hat S}$. Then we have $\hat H_1(\hat R \hat H_2 + \hat S) = (\hat P \hat H_2 + \hat Q)$. In particular,
	\begin{equation}\label{eq:LFT_thm}
		\hat H_1 = (\hat P \hat H_2 + \hat Q) (\hat R \hat H_2 + \hat S)^{-1}
		\quad\text{and}\quad
		\hat H_2 = (- \hat H_1 \hat R + \hat P )^{-1} (\hat H_1 S - \hat Q).
	\end{equation}
\end{corollary}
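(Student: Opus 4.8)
The plan is to derive \cref{cor:main_LFT_result} directly from the algebraic characterization in \cref{thm:main_LFT_result} by expanding the block matrix products and rearranging. First I would substitute $\hat M = \sbmat{\hat P & \hat Q \\ \hat R & \hat S}$ into the identity $\bmat{I & -\hat H_1}\hat M \bmat{\hat H_2 \\ I} = 0$ and carry out the multiplication block by block:
\[
\bmat{I & -\hat H_1}\bmat{\hat P & \hat Q \\ \hat R & \hat S}\bmat{\hat H_2 \\ I}
= \bmat{\hat P - \hat H_1\hat R & \hat Q - \hat H_1\hat S}\bmat{\hat H_2 \\ I}
= (\hat P - \hat H_1\hat R)\hat H_2 + (\hat Q - \hat H_1\hat S).
\]
Setting this to zero and moving the terms containing $\hat H_1$ to one side gives $\hat H_1\hat R\hat H_2 + \hat H_1\hat S = \hat P\hat H_2 + \hat Q$, i.e.\ $\hat H_1(\hat R\hat H_2 + \hat S) = \hat P\hat H_2 + \hat Q$, which is the first claim of the corollary.

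Next I would solve for $\hat H_1$: assuming $\hat R\hat H_2 + \hat S$ is invertible, right-multiply by its inverse to obtain $\hat H_1 = (\hat P\hat H_2 + \hat Q)(\hat R\hat H_2 + \hat S)^{-1}$, the first formula in \eqref{eq:LFT_thm}. For the formula for $\hat H_2$, the cleanest route is to collect the $\hat H_2$ terms in the same relation: from $(\hat P - \hat H_1\hat R)\hat H_2 = \hat H_1\hat S - \hat Q$ I would left-multiply by $(-\hat H_1\hat R + \hat P)^{-1}$, when this inverse exists, to obtain $\hat H_2 = (-\hat H_1\hat R + \hat P)^{-1}(\hat H_1\hat S - \hat Q)$. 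Alternatively one can start from the equivalent identity $\bmat{I & -\hat H_2}\hat M^{-1}\bmat{\hat H_1 \\ I} = 0$ supplied by \cref{thm:main_LFT_result} and repeat the same expansion with $\hat M^{-1}$ in place of $\hat M$; both routes yield the same answer, and the equivalence of the two identities is already part of \cref{thm:main_LFT_result}, so no extra work is needed there.

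The only substantive point beyond this two-line block computation is the invertibility of $\hat R\hat H_2 + \hat S$ (and of $-\hat H_1\hat R + \hat P$). I would note that these are precisely the well-posedness conditions of the corresponding feedback interconnections — analogous to needing $zI-A$ or an $(I-D\Phi)$-type factor to be nonsingular — and that the polynomial identity $\hat H_1(\hat R\hat H_2 + \hat S) = \hat P\hat H_2 + \hat Q$ holds unconditionally, with the rational expressions in \eqref{eq:LFT_thm} valid wherever the indicated inverses exist. Phrasing this caveat cleanly is the step I expect to require the most care; the algebra itself is routine.
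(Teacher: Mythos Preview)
Your proposal is correct and matches the paper's approach: the corollary is presented in the paper without proof, as an immediate consequence of \cref{thm:main_LFT_result} obtained by expanding the block identity $\bmat{I & -\hat H_1}\hat M\bmat{\hat H_2 \\ I}=0$ and solving for one transfer function in terms of the other. Your discussion of the invertibility of $\hat R\hat H_2+\hat S$ and $-\hat H_1\hat R+\hat P$ is actually more careful than the paper, which simply asserts \eqref{eq:LFT_thm} without comment on when the inverses exist.
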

\medskip

\begin{remark}\label{rem:name_choice}
	The relationships between $\hat H_1$ and $\hat H_2$ in \cref{eq:LFT_thm} are commonly called \emph{linear fractional transformations} (LFTs), which is why we chose the name \emph{LFT equivalence}.
\end{remark}
\medskip

The results above can also be derived by direct manipulation of the block diagram
as we demonstrated with shift equivalence in \cref{fig_shift_transformation}. 
In this case, the manipulation is a bit more involved; see \cref{fig_LFT_transformation}.

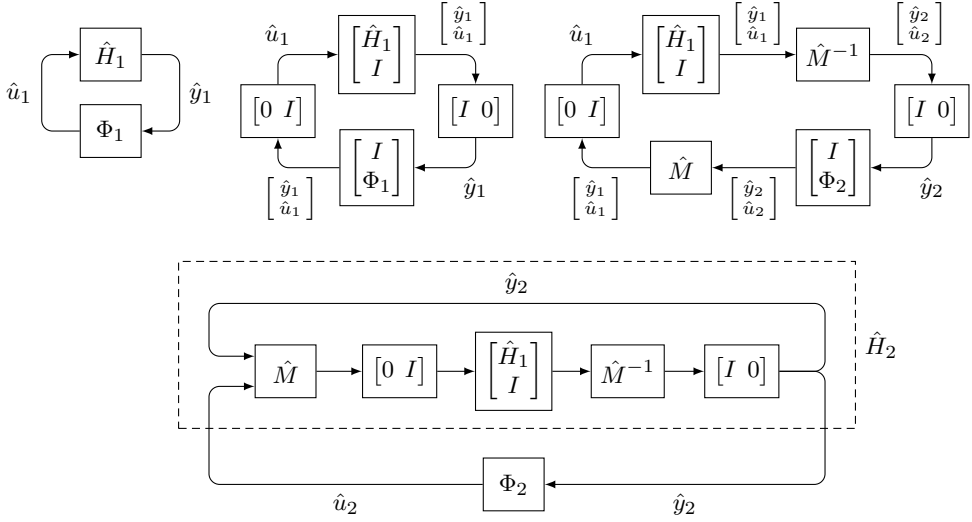
\begin{figure}[ht]
	\centering
	\begin{tikzpicture}[font=\small]
		\tikzstyle{blk}=[minimum width=8mm,minimum height=7mm,draw]
		\tikzstyle{arr}=[>=latex,auto,->,rounded corners]
		
		\begin{scope}
		\node[blk] at (0, 0) (algo) {$\hat H_1$};
		\node[blk,below= 0.3 of algo] (oracle) {$\Phi_1$};
		\coordinate (t1) at ($(algo.east) + (0.5,0)$);
		\coordinate (t2) at ($(algo.west) + (-0.5,0)$);
		\draw[arr]  (algo) -- (t1) |- node[pos=0.25]{$\hat y_1$} (oracle);
		\draw[arr]  (oracle) -| node[pos=0.75]{$\hat u_1$} (t2) -- (algo);
		\end{scope}

		\begin{scope}[shift={(3.5,0)}]
			\node[blk] at (0, 0) (algo) {$\bmat{\hat H_1\\I}$};
			\node[blk] at (0,-1.5) (oracle) {$\bmat{I \\ \Phi_1}$};
			\node[blk] at (1.3,-0.75) (D1) {$\bmat{I & 0}$};
			\node[blk] at (-1.3,-0.75) (D2)  {$\bmat{0 & I}$};
			\draw[arr]  (algo) -| node[pos=0.4]{$\sbmat{\hat y_1 \\ \hat u_1}$} (D1);
			\draw[arr]  (D1) |- node{$\hat y_1$} (oracle);
			\draw[arr]  (oracle) -| node[pos=0.4]{$\sbmat{\hat y_1 \\ \hat u_1}$} (D2);
			\draw[arr]  (D2) |- node{$\hat u_1$} (algo);
		\end{scope}

		\begin{scope}[shift={(7.5,0)}]
			\node[blk] at (0, 0) (algo) {$\bmat{\hat H_1\\I}$};
			\node[blk] at (2,0) (Mi) {$\hat M^{-1}$};
			\node[blk] at (2,-1.5) (oracle) {$\bmat{I \\ \Phi_2}$};
			\node[blk] at (0,-1.5) (M) {$\hat M$};
			\node[blk] at (3.3,-0.75) (D1) {$\bmat{I & 0}$};
			\node[blk] at (-1.3,-0.75) (D2)  {$\bmat{0 & I}$};
			\draw[arr]  (algo) -- node{$\sbmat{\hat y_1 \\ \hat u_1}$} (Mi);
			\draw[arr]  (Mi) -| node[pos=0.4]{$\sbmat{\hat y_2 \\ \hat u_2}$} (D1);
			\draw[arr]  (D1) |- node{$\hat y_2$} (oracle);
			\draw[arr]  (oracle) -- node{$\sbmat{\hat y_2 \\ \hat u_2}$} (M);
			\draw[arr]  (M) -| node[pos=0.4]{$\sbmat{\hat y_1 \\ \hat u_1}$} (D2);
			\draw[arr]  (D2) |- node{$\hat u_1$} (algo);
		\end{scope}

		\begin{scope}[shift={(5.3,-4.2)}]
			\node[blk] at (0,0) (algo) {$\bmat{\hat H_1\\I}$};
			\node[blk] at (1.5,0) (Mi) {$\hat M^{-1}$};
			\node[blk] at (-1.5,0) (D1) {$\bmat{0 & I}$};
			\node[blk] at (3,0) (D2)  {$\bmat{I & 0}$};
			\node[blk] at (-3,0) (M) {$\hat M$};
			\coordinate (M1) at ($(M.west)+(0,0.2)$);
			\coordinate (M2) at ($(M.west)+(0,-0.2)$);
			\node[blk] (oracle) at (0,-1.5) {$\Phi_2$};
			\draw[arr] (M) -- (D1);
			\draw[arr] (D1) -- (algo);
			\draw[arr] (algo) -- (Mi);
			\draw[arr] (Mi) -- (D2);
			\coordinate (D2x) at ($(D2.east) + (0.6,0)$);
			\draw[arr] (D2) -- (D2x) |- node[pos=0.75]{$\hat y_2$}(oracle);
			\draw[arr] (oracle) -| node[pos=0.25]{$\hat u_2$} ($(M2)+(-0.6,0)$) -- (M2);
			\draw[arr] (D2) -- (D2x) -- +(0,0.9) -| node[pos=0.25,swap] {$\hat y_2$} ($(M1)+(-0.6,0)$) -- (M1);
			\coordinate (c1) at ($(M.south west) + (-1,-0.4)$);
			\coordinate (c2) at ($(D2.north east) + (1,1.1)$);
			\coordinate (c3) at (c2 |- c1);
			\coordinate (cm) at ($(c3)!0.5!(c2)$);
			\draw[densely dashed] (c1) rectangle (c2);
			\node[right] at (cm) {$\hat H_2$};
		\end{scope}

	\end{tikzpicture}
	\caption{Equivalent block diagrams representing LFT equivalence. Starting from the top left, we augment the algorithm and oracle, we transform the oracle using the linear equivalence $\Phi_1 \Msim \Phi_2$, and finally we isolate the equivalent $\hat H_2$ in feedback with $\Phi_2$.}
	\label{fig_LFT_transformation}
\end{figure}

The dashed box in \cref{fig_LFT_transformation} represents the equivalent $\hat H_2$. Based on the block diagram, we obtain the following algebraic relationships:
\[
\hat y_2 = \bmat{I & 0} \hat M^{-1} \bmat{\hat H_1 \\ I} \bmat{0 & I} \hat M \bmat{\hat y_2 \\ \hat u_2}
\qquad\text{and}\qquad
\hat y_2 = \hat H_2 \hat u_2.
\]
These equations can be resolved in various ways. Most relevant for our purpose, we eliminate $\hat y_2$ and seek an identity that holds for all $\hat u_2$, which leads to:
\[
\hat H_2 = \bmat{I & 0} \hat M^{-1} \bmat{\hat H_1 \\ I} \bmat{0 & I} \hat M \bmat{\hat H_2 \\ I}.
\]
This expression can be further simplified to obtain the relationships in \cref{thm:main_LFT_result,cor:main_LFT_result}.

The special cases of \cref{prop:LFT_properties} lead to simple expressions for LFT equivalence between algorithms.

\medskip
\begin{corollary}[commutation]\label{cor:commutation}
	Suppose $\phi(\hat C \hat y) = \hat C\phi(\hat y)$ for all $\hat y$.
	Then ${\hat H_1 \algcomp \phi \Msim \hat H_2 \algcomp \phi}$ with $\hat M = \sbmat{\hat C & 0 \\ 0 & \hat C}$. Consequently, $\hat H_1 = \hat C \hat H_2 \hat C^{-1}$. If we let $\hat C = \hat \Delta_m$ (multi-shift), then we see that shift equivalence is a special case of LFT equivalence.
\end{corollary}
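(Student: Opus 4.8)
The plan is to derive the corollary entirely from machinery already established: the commutation case of \cref{prop:LFT_properties} together with the algebraic characterization \cref{thm:main_LFT_result}. First I would observe that the hypothesis $\phi(\hat C\hat y) = \hat C\phi(\hat y)$ for all $\hat y$ is verbatim the commutation condition of \cref{prop:LFT_properties}(2), so $\phi \Msim \phi$ holds with the block-diagonal transformation $\hat M = \sbmat{\hat C & 0 \\ 0 & \hat C}$. For this to be a legitimate linear equivalence in the sense of \cref{def:lin_related_oracles}, $\hat M$ must be invertible, i.e.\ $\hat C$ must be invertible; this is automatic in the two cases of interest, namely $\hat C$ a constant invertible matrix and $\hat C = \hat\Delta_m$, whose inverse is the improper transfer function $\diag(z^{m_i})$.

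Next I would apply \cref{thm:main_LFT_result} with $\Phi_1 = \Phi_2 = \phi$ and this $\hat M$. The theorem states that $\hat H_1\circ\phi \Msim \hat H_2\circ\phi$ if and only if $\bmat{I & -\hat H_1}\,\hat M\,\bmat{\hat H_2 \\ I} = 0$. A one-line computation gives $\hat M\,\bmat{\hat H_2 \\ I} = \bmat{\hat C\hat H_2 \\ \hat C}$ and hence $\bmat{I & -\hat H_1}\,\bmat{\hat C\hat H_2 \\ \hat C} = \hat C\hat H_2 - \hat H_1\hat C$. So the LFT-equivalence condition is precisely $\hat H_1\hat C = \hat C\hat H_2$, and multiplying on the right by $\hat C^{-1}$ yields $\hat H_1 = \hat C\hat H_2\hat C^{-1}$. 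This is exactly the asserted relationship.

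Finally, for the claim that shift equivalence is a special case, I would recall the fact established in the discussion preceding \cref{def2}: any normalized time-invariant oracle $\Phi$ commutes with every multi-shift, $\hat\Delta_m\,\Phi(\hat y) = \Phi(\hat\Delta_m\hat y)$ for all $\hat y$ (this is the vector-oracle form of the scalar commutation identity, obtained componentwise, or via the concatenation case of \cref{prop:LFT_properties}). Taking $\hat C = \hat\Delta_m$, the commutation hypothesis is satisfied, so the corollary applies and gives $\hat H_1\circ\Phi \Msim \hat H_2\circ\Phi$ exactly when $\hat H_1 = \hat\Delta_m\hat H_2\hat\Delta_m^{-1}$ — which is the defining relation of shift equivalence in \cref{def2}. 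Thus $\hat H_1 \sim \hat H_2$ implies $\hat H_1\circ\Phi \sim \hat H_2\circ\Phi$, embedding shift equivalence into LFT equivalence.

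I do not expect a genuine obstacle: the argument is a direct specialization. The only points requiring care are the bookkeeping around invertibility of $\hat C$ (equivalently of $\hat M$), so that \cref{def:lin_related_oracles} and the inversion step $\hat C^{-1}$ are both justified, and ensuring the commutation identity invoked in the multi-shift step is the vector-oracle version $\hat\Delta_m\Phi = \Phi\hat\Delta_m$ rather than merely the scalar one.
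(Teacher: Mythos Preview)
Your proposal is correct and follows essentially the same route as the paper: the corollary is stated there as an immediate consequence of the commutation case of \cref{prop:LFT_properties} together with \cref{thm:main_LFT_result}/\cref{cor:main_LFT_result}, and your argument simply fills in those two lines (specializing $\hat M=\sbmat{\hat C & 0\\0 & \hat C}$ in the equivalence criterion and reading off $\hat H_1\hat C=\hat C\hat H_2$). The only cosmetic difference is that the paper would more naturally invoke \cref{cor:main_LFT_result} with $\hat P=\hat S=\hat C$, $\hat Q=\hat R=0$ to get $\hat H_1=\hat C\hat H_2\hat C^{-1}$ in one step, whereas you go through the bilinear condition of \cref{thm:main_LFT_result}; these are equivalent.
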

\medskip

\begin{corollary}[equivariance]\label{cor:equivariance}
	Suppose $\phi_1(\hat A \hat y) = \hat B\phi_2(\hat y)$ for all $\hat y$.
	Then \\
	$\hat H_1 \algcomp \phi_1 \Msim \hat H_2 \algcomp \phi_2$ with $\hat M = \sbmat{\hat A & 0 \\ 0 & \hat B}$. Consequently, $\hat H_1 = \hat A \hat H_2 \hat B^{-1}$.
\end{corollary}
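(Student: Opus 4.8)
The plan is to obtain this as a direct specialization of the LFT machinery already developed. First I would invoke part~3 (\emph{equivariance}) of \cref{prop:LFT_properties}: the hypothesis $\phi_1(\hat A\hat y) = \hat B\phi_2(\hat y)$ for all $\hat y$ is exactly its premise, so it yields $\phi_1 \Msim \phi_2$ with $\hat M = \sbmat{\hat A & 0 \\ 0 & \hat B}$. I would note in passing that this step tacitly uses invertibility of $\hat A$ and $\hat B$ as transfer functions, so that $\hat M$ is invertible and so that $\hat y \mapsto \hat A\hat y$ is a bijection on sequences; this is part of what \cref{prop:LFT_properties} presupposes under its standing assumptions.

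With $\Phi_1 \Msim \Phi_2$ established for this specific block-diagonal $\hat M$, I would apply \cref{thm:main_LFT_result}: $\hat H_1 \circ \phi_1$ and $\hat H_2 \circ \phi_2$ are LFT-equivalent with respect to $\hat M$ if and only if $\bmat{I & -\hat H_1}\hat M\bmat{\hat H_2 \\ I} = 0$. Substituting the block-diagonal $\hat M$ gives $\hat M\bmat{\hat H_2 \\ I} = \bmat{\hat A\hat H_2 \\ \hat B}$, so the characterization collapses to $\hat A\hat H_2 - \hat H_1\hat B = 0$. Since $\hat B$ is invertible, this rearranges to $\hat H_1 = \hat A\hat H_2\hat B^{-1}$, the claimed relation. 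Equivalently, one may read it straight off \cref{cor:main_LFT_result} by taking $\hat P = \hat A$, $\hat Q = 0$, $\hat R = 0$, $\hat S = \hat B$, since then $(\hat P\hat H_2 + \hat Q)(\hat R\hat H_2 + \hat S)^{-1} = \hat A\hat H_2\hat B^{-1}$ and $(-\hat H_1\hat R + \hat P)^{-1}(\hat H_1\hat S - \hat Q) = \hat A^{-1}\hat H_1\hat B$, consistent with the first identity.

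There is no genuinely hard step here; once \cref{prop:LFT_properties,thm:main_LFT_result,cor:main_LFT_result} are in hand, the result is bookkeeping. The only point that needs a moment's care is the invertibility caveat above — in particular that $\hat R\hat H_2 + \hat S = \hat B$ is invertible, so the linear fractional transformation in \cref{cor:main_LFT_result} is well-defined — and this is inherited from the assumptions under which \cref{prop:LFT_properties}(3) holds. (The earlier \cref{cor:commutation} is just the further specialization $\phi_1 = \phi_2$, $\hat A = \hat B = \hat C$.)
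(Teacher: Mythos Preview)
Your proposal is correct and matches the paper's intended derivation: the corollary is presented immediately after the sentence ``The special cases of \cref{prop:LFT_properties} lead to simple expressions for LFT equivalence between algorithms,'' so the paper's implicit proof is exactly to combine item~3 of \cref{prop:LFT_properties} with \cref{thm:main_LFT_result} (or \cref{cor:main_LFT_result}), which is precisely what you do.
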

\medskip

We are also interested in the special case of algorithms $\hat H_1 \algcomp(\Psi,\phi_1)$ and $\hat H_2\algcomp(\Psi,\phi_2)$, where $\Psi$ is some set of oracles common to both algorithms, and $\phi_1\Msim\phi_2$ with $\hat M = \sbmat{\hat P & \hat Q \\ \hat R & \hat S}$ In this case, by concatenation (\cref{prop:LFT_properties}), we have $(\Psi,\phi_1)\Mpsim(\Psi,\phi_2)$ with $\hat M' = \sbmat{I & 0 & 0 & 0 \\ 0 & \hat P & 0 &\hat Q \\ 0 & 0 & I & 0 \\ 0 & \hat R & 0 & \hat S}$. Therefore, we immetiately obtain the following corollary.

\medskip
\begin{corollary}[LFT equivalence with common oracles]\label{cor:scalar_LFT2}
Suppose $\phi_1 \Msim \phi_2$ with $\hat M = \sbmat{\hat P & \hat Q \\ \hat R & \hat S}$.
Let $\Psi$ be another oracle. Then $\hat H_1 \algcomp (\Psi,\phi_1) \Mpsim \hat H_2 \algcomp (\Psi,\phi_2)$ with $\hat M'$ defined above, if (see \cref{cor:main_LFT_result}):
\(
\hat H_1\left( \bmat{0 & 0 \\ 0 & \hat R}\hat H_2 + \bmat{I & 0 \\ 0 & \hat S}\right) = \bmat{I & 0 \\ 0 & \hat P}\hat H_2 + \bmat{0 & 0 \\ 0 & \hat Q}.
\)
\end{corollary}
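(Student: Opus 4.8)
The plan is to obtain \cref{cor:scalar_LFT2} as a two-step specialization of machinery already established: first build the linear equivalence of the concatenated oracles, then apply the algebraic characterization of LFT equivalence. \textbf{Step one.} I would invoke the Concatenation case of \cref{prop:LFT_properties} with index set $\{1,2\}$: the first oracle is $\Psi$ on both sides, related to itself by $\hat M_1 = \sbmat{I & 0 \\ 0 & I}$ (the Identity case of the same proposition), and the second oracle is $\phi_1$ on one side and $\phi_2$ on the other, related by the hypothesized $\hat M_2 = \hat M = \sbmat{\hat P & \hat Q \\ \hat R & \hat S}$. Concatenation yields $(\Psi,\phi_1) \Mpsim (\Psi,\phi_2)$ with $\hat M' = \sbmat{\diag(\hat P_i) & \diag(\hat Q_i) \\ \diag(\hat R_i) & \diag(\hat S_i)}$; substituting $\hat P_1 = \hat S_1 = I$, $\hat Q_1 = \hat R_1 = 0$, $\hat P_2 = \hat P$, $\hat Q_2 = \hat Q$, $\hat R_2 = \hat R$, $\hat S_2 = \hat S$ reproduces exactly the block matrix $\hat M' = \sbmat{I & 0 & 0 & 0 \\ 0 & \hat P & 0 & \hat Q \\ 0 & 0 & I & 0 \\ 0 & \hat R & 0 & \hat S}$ recorded in the statement. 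It is invertible, since after a permutation of channels grouping $(\hat y,\hat u)$ per oracle it is block-diagonal with diagonal blocks $\hat M_1,\hat M_2$, both invertible.

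\textbf{Step two.} I would partition $\hat M'$ into $2\times 2$ blocks of blocks, $\hat M' = \sbmat{\hat P' & \hat Q' \\ \hat R' & \hat S'}$ with $\hat P' = \sbmat{I & 0 \\ 0 & \hat P}$, $\hat Q' = \sbmat{0 & 0 \\ 0 & \hat Q}$, $\hat R' = \sbmat{0 & 0 \\ 0 & \hat R}$, $\hat S' = \sbmat{I & 0 \\ 0 & \hat S}$, and feed $(\hat H_1,\hat H_2,\hat M')$ into \cref{cor:main_LFT_result} (equivalently, the backward direction of \cref{thm:main_LFT_result} applied to $\hat M'$). That gives $\hat H_1 \circ (\Psi,\phi_1) \Mpsim \hat H_2 \circ (\Psi,\phi_2)$ as soon as $\hat H_1(\hat R' \hat H_2 + \hat S') = \hat P' \hat H_2 + \hat Q'$, and inserting the block values above turns this into precisely $\hat H_1\!\left(\sbmat{0 & 0 \\ 0 & \hat R}\hat H_2 + \sbmat{I & 0 \\ 0 & \hat S}\right) = \sbmat{I & 0 \\ 0 & \hat P}\hat H_2 + \sbmat{0 & 0 \\ 0 & \hat Q}$, which is the hypothesis of the corollary. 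Since only the ``if'' direction is claimed, nothing beyond this substitution is required.

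The only real obstacle is bookkeeping about channel ordering: I must confirm that Concatenation arranges the signals as $(\hat y_\Psi,\hat y_\phi,\hat u_\Psi,\hat u_\phi)$ in the same order that \cref{thm:main_LFT_result,cor:main_LFT_result} expect when forming $\sbmat{\hat H_2 \\ I}$ and $\sbmat{I & -\hat H_1}$, so that the block identities line up without a hidden permutation. Once the conventions are pinned down, the remaining verification is a single line of block-matrix arithmetic; optionally I could cross-check it against the direct block-diagram manipulation of \cref{fig_LFT_transformation}, which yields the same relation.
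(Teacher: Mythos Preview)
Your proposal is correct and follows essentially the same route as the paper: the paragraph preceding the corollary already constructs $\hat M'$ via the Identity and Concatenation cases of \cref{prop:LFT_properties}, and the corollary is then obtained by substituting this $\hat M'$ into \cref{cor:main_LFT_result}. Your added remark about channel-ordering conventions is a reasonable sanity check but does not introduce a new idea.
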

\medskip

\titleparagraph{Efficient determination of LFT equivalence}

To determine whether $\hat H_1 \algcomp \Phi_1 \sim \hat H_2 \algcomp \Phi_2$, we must first establish how $\Phi_1$ and $\Phi_2$ are related. If there exists some $\hat M$ such that $\Phi_1 \Msim \Phi_2$, then we can apply \cref{thm:main_LFT_result}, and we have $\hat H_1 \algcomp \Phi_1 \sim \hat H_2 \algcomp \Phi_2$ if $\bmat{I & -\hat H_1} \hat M \bmat{\hat H_2 \\ I} = 0$. For an example of how this result can be used in practice, see the end of \cref{ex:LFT_equiv}.

If there are many different $\hat M$ matrices that work, say $\Phi_1 \Msim\Phi_2$ for all $\hat M \in \mathcal{M}$, then it follows from \cref{def:lin_related_oracles} that $\mathcal{M}$ is a multiplicative group. Determining equivalence amounts to checking feasiblity of the problem $\bmat{I & -\hat H_1} \hat M \bmat{\hat H_2 \\ I} = 0$ with $\hat M \in \mathcal{M}$. We saw at the end of \cref{shift-equ} how to solve this problem for the special case of shift-equivalence ($\mathcal{M}$ is the set of multi-shifts). However, we suspect that solving this problem for different $\mathcal{M}$ would require a case-by-case analysis.

\subsection{Proxes, subdifferentials, and their conjugates}\label{sec:prox_subdiff}

We are now ready to return to the motivating examples of \crefrange{algo11}{algo12b}. The oracles $\{\partial f, \partial f^*, \prox_{tf}, \prox_{\frac{1}{t}f^*}\}$ are linearly equivalent to one another. Using the identities at the beginning of \cref{lft-equ}, these relationships can be derived as in \cref{eq:example_prox_eqn}. The associated matrices $\hat M$ corresponding to \cref{def:lin_related_oracles} are given in \cref{table:LFTproxes}.

% \vspace{-5mm}
\begin{figure}[htb!]
	\centering
\begin{equation*}
\def\dy{4mm}
\begin{array}{r|cccc}
	\def\dw{1.2cm}
	\def\dh{0.7cm}
	\begin{tikzpicture}[baseline={(b)}]
		\node[minimum height=\dh, minimum width=\dw] (box) {};
  		\path (box.center) -- (box.south) coordinate[pos=0.25] (b);
		\node[anchor=south west, inner sep=0] at (box.north west) {\phantom{A}};
		\draw ($(box.north west) + (0.3,0)$) -- ($(box.south east) + (-0.3,0)$);
		\node[anchor=north east] at (box.north east) {$\phi_2$};
		\node[anchor=south west] at (box.south west) {$\phi_1$};
	\end{tikzpicture} & \partial f & \partial f^* & \prox_{tf} & \prox_{\frac{1}{t}f^*} \\ \midrule
\partial f\;\;
	& \bmat{1 & 0 \\ 0 & 1}
	& \bmat{0 & 1 \\ 1 & 0}
	& \bmat{0 & 1 \\ \frac{1}{t} & -\frac{1}{t}}
	& \bmat{t & -t \\ 0 & 1} \\[\dy]
\partial f^*\;\;
	& \bmat{0 & 1 \\ 1 & 0}
	& \bmat{1 & 0 \\ 0 & 1}
	& \bmat{\frac{1}{t} & -\frac{1}{t} \\ 0 & 1}
	& \bmat{0 & 1 \\ t & -t} \\[\dy]
\prox_{tf}\;\;
	& \bmat{1 & t \\ 1 & 0}
	& \bmat{t & 1 \\ 0 & 1}
	& \bmat{1 & 0 \\ 0 & 1}
	& \bmat{t & 0 \\ t & -t} \\[\dy]
\prox_{\frac{1}{t}f^*}\;\;
	& \bmat{\frac{1}{t} & 1 \\ 0 & 1}
	& \bmat{1 & \frac{1}{t} \\ 1 & 0}
	& \bmat{\frac{1}{t} & 0 \\ \frac{1}{t} & -\frac{1}{t}} 
	& \bmat{1 & 0 \\ 0 & 1} \\
\end{array}
\end{equation*}
\caption{Matrices $\hat M$ conforming to \cref{def:lin_related_oracles} for all possible linear equivalences between the oracles $\{\partial f, \partial f^*, \prox_{tf}, \prox_{\frac{1}{t}f^*}\}$.}\label{table:LFTproxes}
\end{figure}

Note that the diagonal entries of \cref{table:LFTproxes} are \emph{identity LFT matrices} (see \cref{prop:LFT_properties}). Applying \cref{cor:scalar_LFT2} to the matrices in \cref{table:LFTproxes}, we can obtain a set of algorithms equivalent when we swap one oracle for another.

\medskip
\begin{corollary}[LFT equivalence for prox] \label{thm:conj_equivalence}
	Suppose $\hat H$ is an algorithm that uses oracles partitioned as $(\Psi, \prox_{tg})$. Then, the following transfer functions correspond to LFT-equivalent algorithms.
	\begin{align*}
		\bmat{\hat H_{11} & \hat H_{12} \\ \hat H_{21} & \hat H_{22}} &
		\algcomp (\Psi, \prox_{tg} ),
		\\
		\bmat{\hat H_{11}+\hat H_{12}(I-\hat H_{22})^{-1}\hat H_{21} & -t \hat H_{12}(I-\hat H_{22})^{-1} \\
		\frac{1}{t}(I-\hat H_{22})^{-1}\hat H_{21} & -\hat H_{22}(I-\hat H_{22})^{-1}} &
		\algcomp (\Psi, \prox_{\frac{1}{t}g^*} ),
		\\
		\bmat{\hat H_{11}+\hat H_{12}(I-\hat H_{22})^{-1}\hat H_{21} & -t \hat H_{12}(I-\hat H_{22})^{-1} \\
		(I-\hat H_{22})^{-1}\hat H_{21} & -t(I-\hat H_{22})^{-1}} &
		\algcomp (\Psi, \partial g ),
		\\
		\bmat{\hat H_{11} & \hat H_{12} \\ \frac{1}{t}\hat H_{21} & -\frac{1}{t}(I-\hat H_{22})} &
		\algcomp (\Psi, \partial g^* ).
	\end{align*}
\end{corollary}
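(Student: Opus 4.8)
The plan is to derive all four transfer functions from a single application of \cref{cor:scalar_LFT2} (hence of \cref{cor:main_LFT_result}), carried out once for each target oracle $\phi_2\in\{\prox_{\frac{1}{t}g^*},\partial g,\partial g^*\}$, always pairing with $\phi_1=\prox_{tg}$; the first line (the oracle $(\Psi,\prox_{tg})$ with $\hat H$ itself) is trivially LFT-equivalent to itself via the identity matrix, as in item~1 of \cref{prop:LFT_properties}. First I would read off the relevant scalar LFT matrix $\hat M=\sbmat{\hat P & \hat Q\\ \hat R & \hat S}$ from the $\prox_{tf}$-row of \cref{table:LFTproxes} with $f$ replaced by $g$: namely $\sbmat{t & 0\\ t & -t}$ for $\prox_{\frac{1}{t}g^*}$, $\sbmat{1 & t\\ 1 & 0}$ for $\partial g$, and $\sbmat{t & 1\\ 0 & 1}$ for $\partial g^*$ (each entry a scalar multiple of an identity of the appropriate block size, per the paper's footnote convention), and each invertible since $t\neq 0$. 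By the concatenation rule of \cref{prop:LFT_properties}, this gives $(\Psi,\phi_1)\Mpsim(\Psi,\phi_2)$ with the $2\times2$-block matrix $\hat M'=\sbmat{\hat P' & \hat Q'\\ \hat R' & \hat S'}$ where $\hat P'=\sbmat{I & 0\\ 0 & \hat P}$, $\hat Q'=\sbmat{0 & 0\\ 0 & \hat Q}$, $\hat R'=\sbmat{0 & 0\\ 0 & \hat R}$, $\hat S'=\sbmat{I & 0\\ 0 & \hat S}$.

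Next I would \emph{define} $\hat H_2$ via the LFT formula of \cref{cor:main_LFT_result}, $\hat H_2=(-\hat H\hat R'+\hat P')^{-1}(\hat H\hat S'-\hat Q')$; by that corollary this $\hat H_2\circ(\Psi,\phi_2)$ is LFT-equivalent to $\hat H\circ(\Psi,\prox_{tg})$, which is exactly the claim, so all that remains is to put this formula in closed form. The key simplification is that $\hat R'$ has zero first block-row and block-column, whence $-\hat H\hat R'+\hat P'=\sbmat{I & -\hat H_{12}\hat R\\ 0 & \hat P-\hat H_{22}\hat R}$ is block upper-triangular with identity $(1,1)$-block, so its inverse is $\sbmat{I & \hat H_{12}\hat R(\hat P-\hat H_{22}\hat R)^{-1}\\ 0 & (\hat P-\hat H_{22}\hat R)^{-1}}$. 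Multiplying this by $\hat H\hat S'-\hat Q'=\sbmat{\hat H_{11} & \hat H_{12}\hat S\\ \hat H_{21} & \hat H_{22}\hat S-\hat Q}$ yields a generic expression for the four blocks of $\hat H_2$ in terms of the $\hat H_{ij}$ and the scalars $\hat P,\hat Q,\hat R,\hat S$.

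Finally I would substitute the three choices of $(\hat P,\hat Q,\hat R,\hat S)$ and simplify. For $\partial g$ one has $\hat P-\hat H_{22}\hat R=I-\hat H_{22}$ and the stated block matrix drops out with essentially no further algebra. For $\prox_{\frac{1}{t}g^*}$ one has $\hat P-\hat H_{22}\hat R=t(I-\hat H_{22})$, and the $(1,2)$ and $(2,2)$ blocks are tidied using the identities $I+(I-\hat H_{22})^{-1}\hat H_{22}=(I-\hat H_{22})^{-1}$ and $(I-\hat H_{22})^{-1}\hat H_{22}=\hat H_{22}(I-\hat H_{22})^{-1}$. For $\partial g^*$ one has $\hat P-\hat H_{22}\hat R=tI$, making the whole computation immediate. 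Transitivity of LFT equivalence (\cref{rem:equivalence}) then upgrades ``each equivalent to the first'' to ``all four mutually equivalent.''

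Rather than a genuine obstacle, the point requiring care is the standing invertibility hypothesis implicit in \cref{cor:main_LFT_result}: the LFT is only well-defined when $\hat P-\hat H_{22}\hat R$ (equivalently $I-\hat H_{22}$, up to the nonzero factor $t$) is invertible, so the statement is read with that proviso, which is already signalled by the $(I-\hat H_{22})^{-1}$ appearing in the displayed transfer functions. One should also note that the entries of \cref{table:LFTproxes} do conform to \cref{def:lin_related_oracles}: this is either taken from that table or re-derived from Moreau's identity and the prox--subdifferential relation exactly as in~\eqref{eq:example_prox_eqn}, thread through the $z$-transform as discussed there.
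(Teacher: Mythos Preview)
Your proposal is correct and follows precisely the approach the paper indicates: apply \cref{cor:scalar_LFT2} (hence \cref{cor:main_LFT_result}) with the $\hat M$ matrices read from the $\prox_{tf}$ row of \cref{table:LFTproxes}, then simplify the resulting block formulas. The paper states the corollary without working through the block algebra, so your write-up simply fills in those computational details; the method is identical.
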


\medskip
\begin{corollary}[LFT equivalence for subdifferentials]\label{thm:conj_equivalence2}
	Suppose $\hat H$ is an algorithm that uses oracles partitioned as $(\Psi,\partial g)$. Then, the following transfer functions correspond to LFT-equivalent algorithms.
	\begin{align*}
	\bmat{\hat H_{11} & \hat H_{12} \\ \hat H_{21} & \hat H_{22}} &\algcomp (\Psi, \partial g ) \\
	\bmat{\hat H_{11} - \hat H_{12} \hat H_{22}^{-1} \hat H_{21} &
	\hat H_{12}\hat H_{22}^{-1} \\
	-\hat H_{22}^{-1}\hat H_{21} &
	\hat H_{22}^{-1} } &\algcomp (\Psi,\partial g^*) \\
	\bmat{\hat H_{11} - \hat H_{12} \hat H_{22}^{-1} \hat H_{21} &
	\hat H_{12}\hat H_{22}^{-1} \\
	-t \hat H_{22}^{-1}\hat H_{21} &
	I + t \hat H_{22}^{-1} } &\algcomp (\Psi,\prox_{tg}) \\
	\bmat{\hat H_{11} & \hat H_{12} \\ \frac{1}{t}\hat H_{21} & \frac{1}{t}\hat H_{22}+I} &\algcomp (\Psi,\prox_{\frac{1}{t}g^*})
\end{align*}
\end{corollary}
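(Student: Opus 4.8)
The plan is to read off the claimed transfer functions by a single application, in each of the three cases, of the LFT machinery developed above, using the precomputed conjugation matrices of \cref{table:LFTproxes}. Since $\hat H$ is given with the oracle tuple $(\Psi,\partial g)$, the relevant data are the three matrices $\hat M$ in the row of \cref{table:LFTproxes} labeled $\partial f$, with $f$ replaced throughout by $g$: namely $\sbmat{0 & I \\ I & 0}$ for $\partial g \Msim \partial g^*$, $\sbmat{0 & I \\ \frac{1}{t}I & -\frac{1}{t}I}$ for $\partial g \Msim \prox_{tg}$, and $\sbmat{tI & -tI \\ 0 & I}$ for $\partial g \Msim \prox_{\frac{1}{t}g^*}$. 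These are exactly the instances of \cref{eq:example_prox_eqn} specialized to $g$, and each is invertible, so the hypothesis $\Phi_1\Msim\Phi_2$ of \cref{def:lin_related_oracles} is met; by transitivity of LFT equivalence (\cref{rem:equivalence}) it then suffices to show that each of lines 2--4 is LFT-equivalent to line 1.

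For a fixed target oracle $\phi_2\in\{\partial g^*,\prox_{tg},\prox_{\frac{1}{t}g^*}\}$ with $\hat M=\sbmat{\hat P & \hat Q \\ \hat R & \hat S}$, the concatenation rule in \cref{prop:LFT_properties} gives $(\Psi,\partial g)\Mpsim(\Psi,\phi_2)$ with the block matrix $\hat M' = \sbmat{I & 0 & 0 & 0 \\ 0 & \hat P & 0 & \hat Q \\ 0 & 0 & I & 0 \\ 0 & \hat R & 0 & \hat S}$ of \cref{cor:scalar_LFT2}. Writing $\hat H = \sbmat{\hat H_{11} & \hat H_{12} \\ \hat H_{21} & \hat H_{22}}$ in the matching $2\times2$ block form and applying \cref{cor:main_LFT_result} to $\hat M'$, one finds that $-\hat H\hat R' + \hat P'$ is block upper-triangular (here $\hat R',\hat P'$ denote the left blocks of $\hat M'$), so its inverse is immediate, and
\[
\hat H_2 = \bmat{\hat H_{11}+\hat H_{12}\hat R(\hat P-\hat H_{22}\hat R)^{-1}\hat H_{21} & \hat H_{12}\hat S + \hat H_{12}\hat R(\hat P-\hat H_{22}\hat R)^{-1}(\hat H_{22}\hat S-\hat Q) \\ (\hat P-\hat H_{22}\hat R)^{-1}\hat H_{21} & (\hat P-\hat H_{22}\hat R)^{-1}(\hat H_{22}\hat S-\hat Q)}.
\]
In the three cases $\hat R$ is $0$ or a scalar multiple of $I$, so $\hat P-\hat H_{22}\hat R$ equals $-\hat H_{22}$, $-\frac{1}{t}\hat H_{22}$, or $tI$ respectively; substituting and simplifying yields precisely the three transfer functions in the statement. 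This mirrors step-for-step the argument behind \cref{thm:conj_equivalence} (there the starting oracle is $\prox_{tg}$, which relates to itself with an $I$-term, producing $(I-\hat H_{22})^{-1}$; here it is $\partial g$, producing $\hat H_{22}^{-1}$).

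The only point requiring care is the invertibility of $\hat P-\hat H_{22}\hat R$ (equivalently of $\hat R\hat H_2 + \hat S$), which is what makes the LFT well-defined and $\hat H_2$ a genuine rational transfer function. For $\phi_2=\prox_{\frac{1}{t}g^*}$ this factor is $tI$, so there is nothing to check; for $\phi_2=\partial g^*$ and $\phi_2=\prox_{tg}$ it is a nonzero scalar times $\hat H_{22}$, so one needs $\hat H_{22}$ invertible as a rational matrix — which is exactly the implicit hypothesis under which the stated formulas, already containing $\hat H_{22}^{-1}$, make sense, and the natural analogue of the $(I-\hat H_{22})^{-1}$ condition in \cref{thm:conj_equivalence}. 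As an independent check one may verify each output directly through \cref{thm:main_LFT_result} by confirming $\bmat{I & -\hat H}\hat M'\sbmat{\hat H_2 \\ I} = 0$; this is a routine block computation that uses only $\hat H_{22}\hat H_{22}^{-1}=I$. I expect this verification (or equivalently the cancellation in the $(1,2)$ block of $\hat H_2$) to be the most error-prone bookkeeping step, but it is entirely mechanical.
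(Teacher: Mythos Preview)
Your proposal is correct and follows exactly the route the paper intends: the paper leaves this corollary without explicit proof, indicating only that it is obtained by applying \cref{cor:scalar_LFT2} to the $\partial f$-row of \cref{table:LFTproxes}, and you have carried out precisely that computation (including the block-triangular inversion and the case-by-case substitution) with the correct bookkeeping. Your remark on the invertibility of $\hat H_{22}$ is also on point and matches the paper's own caveat in \cref{rem:invertible_tf}.
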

\medskip

\begin{remark}
	If there is no $\Psi$ in \cref{thm:conj_equivalence,thm:conj_equivalence2} (the prox or subdifferential is the only oracle), then we can extract the $(2,2)$ blocks of all submatrices and we obtain LFT-equivalence among:
	\[
	\hat H \algcomp \prox_{t g},\quad
	-\hat H(I-\hat H)^{-1} \algcomp \prox_{\frac{1}{t}g^*},\quad
	-t(I-\hat H)^{-1} \algcomp 	\partial g,\quad
	-\tfrac{1}{t}(I-\hat H) \algcomp \partial g^*
	\]  
	and among:\quad
	$
	\hat H \algcomp \partial g,\quad
	\hat H^{-1} \algcomp \partial g^*,\quad
	(I+t\hat H^{-1}) \algcomp \prox_{tg},\quad
	(\tfrac{1}{t}\hat H+I) \algcomp \prox_{\frac{1}{t}g^*}
	$.
\end{remark}

\medskip
\begin{remark}\label{rem:invertible_tf}
	In \cref{thm:conj_equivalence2}, $\hat H_{22}$ must be invertible. We may want to also ensure that $\hat H_{22}^{-1}$ is proper, as this is necessary if we want an implementable algorithm. The condition that a transfer function $\hat H$ be invertible and proper can be characterized precisely \cite[Lem.~3.15]{ZDG}; it is equivalent to requiring that $D = \lim_{z\to\infty} \hat H(z)$ is invertible. One possible state-space realization of the inverse transfer function $\hat H^{-1}$ is\looseness=-1
	\[
		\hat H^{-1} =
		\left[\begin{array}{c|c}
			A & B \\ \hline
			C & D
		\end{array}\right]^{-1}
		=
		\left[\begin{array}{c|c}
			A-BD^{-1}C & BD^{-1} \\ \hline
			-D^{-1}C & D^{-1}
		\end{array}\right].
	\]
\end{remark}

\subsection{Examples of LFT equivalence} \label{ex:LFT_equiv}

\titleparagraph{\Crefrange{algo11}{algo12b}}

We can verify equivalence of \Crefrange{algo11}{algo12b} by directly applying \cref{thm:conj_equivalence}. Specifically, substituting $\hat H = \bmat{0 & \frac{1}{z}\\-t & \frac{1}{z}}$ in \cref{thm:conj_equivalence}, we immediately obtain the transfer functions in \cref{eq:conj_tfs}.

\titleparagraph{Oracle swapping and deletion}

Although we did not cover oracle swapping or deletion, both of these notions are trivial to check in our framework. In fact, they are special cases of LFT equivalence.

Most splitting methods are not symmetric with respect to oracle swapping since the different oracles usually have different properties that the algorithm is trying to exploit. Nevertheless, one might be interested in, \eg, Davis--Yin splitting where $f$ and $g$ are swapped (see \cref{algoTOS}). This is called \emph{dual Davis--Yin} in \cite{jiang2023bregman}.
Given $\hat H \algcomp \Psi$, if $\Psi$ is a permutation of the oracles $\Phi$, then we can write $\Psi(P x) = P \Phi(x)$, where $P$ is a permutation matrix. By \cref{cor:equivariance}, we have
$\hat H \algcomp \Psi = P^\top \hat H P \algcomp \Phi$,
\ie, permute the corresponding rows and columns of the transfer matrix. We exploit this fact later in this section when we show that PD3O is LFT-equivalent to Davis--Yin splitting. 

\titleparagraph{DR and Chambolle--Pock}
We can use LFT equivalence to show the relation between Douglas--Rachford (DR), \cref{algo5}, and the primal-dual optimization method proposed
by Chambolle and Pock (\cref{algo14}~\cite{chambolle2011first,o2018equivalence}).

\vspace{-1em}
\begin{algorithm}[H]
	\caption{Chambolle--Pock}
	\captionsetup{font=scriptsize}
	\centering
	\label{algo14}
	\scriptsize
	\begin{algorithmic}
		\For{$k=0, 1, 2,\ldots$}
		\State{$x^{k+1}_1 = \prox_{\tau f}(x^k_1 - \tau M^\top  x^k_2)$}
		\State{$x^{k+1}_2 = \prox_{\sigma g^*}(x^k_2 + \sigma M (2x^{k+1}_1 - x^k_1))$}
		\EndFor
	\end{algorithmic}
\end{algorithm}
\vspace{-1em}

Comparing \cref{algo14} and \cref{algo5}, we should first set $\tau =\sigma=1$ so that the oracles correspond properly. Now, computing transfer functions, we have:
\begin{align*}
\text{\cref{algo5}:}&\; \bmat{
	\frac{-1}{z-1} & \frac{1}{z-1} \\[1mm]
	\frac{2 z-1}{z-1} & \frac{-1}{z-1}}
	\algcomp (\prox_f,\prox_g)
	\quad\text{and}
	\\
\text{\cref{algo14}:}&\;	\bmat{
	\frac{1}{z} & -\frac{1}{z}M^\top  \\[1mm]
 \tfrac{2z-1}{z}M  & \frac{1}{z}}
 \algcomp (\prox_f,\prox_{g^*}).
\end{align*}
Applying \cref{thm:conj_equivalence}, we will have LFT-equivalence between these algorithms if
\[
	\bmat{
	\frac{1}{z} & -\frac{1}{z}M^\top  \\[1mm]
 	\tfrac{2z-1}{z}M  & \frac{1}{z}}
=\bmat{
	\frac{1}{z} & -\frac{1}{z} \\[1mm]
 \tfrac{2z-1}{z}  & \frac{1}{z}}
\]
Therefore, \cref{algo5,algo14} are LFT-equivalent if $M=I$.

\titleparagraph{More three-operator splitting}

An algorithm that has recently attracted considerable attention is the three-operator splitting algorithm of Davis and Yin \cite{DavisYin}. This algorithm solves the problem
\begin{equation*}
	\ba{ll}
	\mbox{minimize} & f(x) + g(x) + h(x)
	\ea
\end{equation*}
using the oracles $\prox_f$, $\prox_g$, and $\nabla h$.  The algorithm and its transfer function are given as follows.

\vspace{-1em}
\begin{minipage}{0.5\linewidth}
	\hfil\noindent
	\begin{algorithm}[H]
		\centering
		\captionsetup{font=scriptsize}
		\caption{Davis--Yin three-operator splitting}
		\label{algoTOS}
		\scriptsize
		\begin{algorithmic}
			\For{$k=0, 1, 2,\ldots$}
			\State{$z^{k} = \prox_{tf}(y^k)$}
			\State{$x^{k} = \prox_{tg}(2z^k-y^k-t \nabla h(z^k))$}
			\State{$y^{k+1} = y^k-z^k+x^k$}
			\EndFor
		\end{algorithmic}
	\end{algorithm}
\end{minipage}
\hfil
\begin{minipage}{0.48\linewidth}
	\[
	\hat H_{\ref{algoTOS}}(z) =
	\bmat{\frac{-1}{z-1} & \frac{1}{z-1} & 0 \\
	 \frac{2 z-1}{z-1} & \frac{-1}{z-1} & -t \\
	 1 & 0 & 0 }
	\]
\end{minipage}
	\vspace{1em}

\noindent Suppose we wanted to design an equivalent algorithm that used the oracles $(\prox_{tf},\prox_{g^*}, \nabla h)$ instead. We proceed in steps:
	\begin{align*}
&		\bmat{\frac{-1}{z-1} & \frac{1}{z-1} & 0 \\
\frac{2 z-1}{z-1} & \frac{-1}{z-1} & -t \\
1 & 0 & 0 }
	\algcomp (\prox_{tf},\prox_{tg}, \nabla h)\\
&
\sim \bmat{\frac{-1}{z-1} & 0 & \frac{1}{z-1} \\
1 & 0 & 0 \\
\frac{2 z-1}{z-1} & -t & \frac{-1}{z-1} }
\algcomp (\prox_{tf}, \nabla h,\prox_{tg}) && \pmat{\text{swap last two rows}\\\text{and columns}}\\
&
\sim \bmat{\frac{1}{z} & -\frac{t}{z} & -\frac{t}{z} \\
 1 & 0 & 0 \\
 \frac{2 z-1}{tz} & -\frac{(z-1)}{z} & \frac{1}{z} }
\algcomp (\prox_{tf}, \nabla h,\prox_{\frac{1}{t}g^*}) &&
\pmat{\text{apply \cref{thm:conj_equivalence}}\\
\text{with $\Phi = (\prox_{tf},\nabla h)$}}\\
&
\sim \bmat{\frac{1}{z}  & -\frac{t}{z} & -\frac{t}{z}\\
\frac{2 z-1}{tz}  & \frac{1}{z} & -\frac{(z-1)}{z}\\
1 & 0 & 0 }
\algcomp (\prox_{tf},\prox_{\frac{1}{t}g^*}, \nabla h) && \pmat{\text{swap last two rows}\\\text{and columns}}
\end{align*}
Now, compare this algorithm to PD3O (\cref{algoPD3O}), which is
\[
	\bmat{\frac{1}{z} & \frac{-\tau A^\top  }{z} & \frac{-\tau }{z} \\
	\frac{\sigma  (2 z-1)A}{z} & \frac{1}{z} & \frac{-\sigma  \tau  (z-1) A}{z} \\
	1 & 0 & 0 } \algcomp (\prox_{tf},\prox_{\sigma g^*}, \nabla h)
\]
We can see that the algorithms are LFT-equivalent upon setting $A=I$, $\tau=t$, $\sigma=\frac{1}{t}$. Although this result is known \cite{PD3O,jiang2023bregman}, the benefit of systematizing algorithm equivalence is that these sorts of equivalences can be determined straightforwardly. We can directly verify the equivalence above by applying \cref{thm:main_LFT_result} with $\hat M = \sbmat{t & 0 \\ t & -t}$ taken from \cref{table:LFTproxes} and applied to the middle oracle to transform $\prox_{t g}$ to $\prox_{\frac{1}{t} g^*}$:
\[
\bmat{I & -\hat H_{\ref{algoTOS}}} \hat M \bmat{\hat H_{\ref{algoPD3O}} \\ I}
= \bmat{1 & 0 & 0 & \frac{1}{z-1} & \frac{-1}{z-1} & 0 \\
        0 & 1 & 0 & \frac{1-2z}{z-1} & \frac{1}{z-1} & t \\
		0 & 0 & 1 & -1 & 0 & 0}
\bmat{ 1 & 0 & 0 & 0 &  0 & 0 \\
	   0 & t & 0 & 0 &  0 & 0 \\
	   0 & 0 & 1 & 0 &  0 & 0 \\
	   0 & 0 & 0 & 1 &  0 & 0 \\
	   0 & t & 0 & 0 & -t & 0 \\
	   0 & 0 & 0 & 0 &  0 & 1}
\bmat{ \frac{1}{z} & \frac{-t}{z} & \frac{-t}{z} \\
       \frac{2z-1}{t z} & \frac{1}{z} & \frac{1-z}{z} \\
	   1 & 0 & 0 \\
	   1 & 0 & 0 \\
	   0 & 1 & 0 \\
	   0 & 0 & 1 } = 0.
\]

%%%%%%%%%%%%%%%%%%%%%%%%%%%%%%%%%%%%%%%%%%%%%%%%%%%%%%%%%%%%%%%%%%%%%%%%%%%%%%%%%%%%%%%%%%%
\section{Discussion}\label{discussion}

\titleparagraph{One algorithm, many interpretations and implementations}
Is it useful to have many different forms of an algorithm, if
all the forms are LFT-equivalent?
Yes: different rewritings of one algorithm
often yield different (``physical'') intuition.
For example,
\cref{algo_i1} uses the current
loss function for extrapolation~\cite{vasilyev2010extragradient};
while \cref{algo_i2} seems to extrapolate
from the previous loss function~\cite{censor2011subgradient}.
The distributed algorithms \cref{algoNIDS,algoExDIFF}, although equivalent,
were developed using very different intuition.
The former used a \emph{gradient differencing scheme} \cite{NIDS}
whereas the latter used an \emph{adapt-correct-combine} approach \cite{ExDIFF}.

Equivalent algorithms can differ in memory usage, computational efficiency,
or numerical stability.
For example, implementations of
\cref{algo_i3,algo_i4} lead to different memory usage~\cite{daskalakis2018training, malitsky2015projected}.
At each time step $k$, \cref{algo_i3} needs to store $x_2^{k+1}$ and $F(x_2^k)$,
but \cref{algo_i4} only needs to store $x_1^{k}$ in memory.
These different rewritings also naturally yield different generalizations,
for example, by projecting different state variables.
Likewise, Douglas--Rachford (\cref{algo5}) only requires storing $x_3^k$ at each time step $k$, whereas simplified ADMM (\cref{algo6}) requires storing $\xi_2^k$ and $\xi_3^k$. This is evident from \cref{fig:algo_unroll_compare}; the dotted lines cross one arrow for Douglas--Rachford and two arrows for ADMM.

\titleparagraph{Stochastic and randomized algorithms}

Our framework applies to stochastic or randomized algorithms with almost no modifications,
simply by allowing random oracles.
For example, we can accept oracles like
random search $\min_{i=1,\dots,k} f(x,\omega_i)$,
stochastic gradient $\nabla f(x) + \omega$,
or noisy gradient $\nabla f(x+\omega)$.
The definition of oracle equivalence requires a slight modification in this setting:
for algorithms that use randomized oracles,
two algorithms are oracle-equivalent if they generate identical sequences of oracle
calls when evaluated along the same sample path.

\titleparagraph{Time-varying algorithms}

The linear time-invariant (LTI) algorithm assumption is critical, as the ability to relate the $z$-transforms of the input and output via multiplication with a transfer function ($\hat y = \hat H \hat u$) critically relies on the map $(u^0,u^1,\dots)\mapsto(y^0,y^1,\dots)$ being LTI.

Nevertheless, many of the other concepts from \cref{control} do extend to systems that are time varying.
For example, an algorithm with parameters that change on a fixed schedule but is otherwise linear, such as gradient descent with a diminishing stepsize, can be regarded as a linear time-varying (LTV) system~\cite{antsaklis2006linear}, 
and the notion of a transfer function has been generalized to LTV systems~\cite{LTV_TF}.
If, instead, the parameters change adaptively based on the other state variables, the system can be regarded as a linear parameter varying (LPV) system~\cite{LPV_book} or a switched system~\cite{sun2006switched}. Examples of such algorithms include nonlinear conjugate gradient methods and quasi-Newton methods.

\titleparagraph{Oracle structure}

We assumed throughout this paper that all oracles were \emph{nonlinear and time-invariant}.
If we weaken this assumption, and let the oracles be \emph{nonlinear and time-varying}, the notion of oracle equivalence is still meaningful: it holds if the two algorithms invoke the same sequence of oracle calls. However, shift equivalence no longer works because time-varying operators do not commute with time shifts.

If we strengthen the assumption instead, and assume the oracles are endowed with additional structure, then further equivalences are possible. Indeed, every commutation relation satisfied by the oracle leads to a new notion of equivalence!
 
For example, an oracle that is \emph{linear and time-invariant} would commute with any other LTI system (not just multi-shifts). 
As an example, consider DR (\cref{algo5}) where $f$ is known to be a quadratic function. In this case, the oracle $L = \prox_f$ is \emph{linear} and therefore commutes with \emph{any LTI system}. 
For example, it commutes with the dynamical system:
\[
\left\{\begin{aligned}
x^{k+1} &= x^k - \alpha u^k \\
y^k &= x^k
\end{aligned}\right\}
\quad=\quad
\left[\begin{array}{c|c}
	1 & -\alpha \\ \hline
	1 & 0
\end{array}\right]
\quad=\quad
\frac{1}{z-\alpha}
\]
Assuming $x^0=0$, this dynamical system maps $(u^0,u^1,\dots)\mapsto(y^0,y^1,\dots)$, with
\begin{equation}\label{filter}
y^k = u^k+\alpha u^{k-1} + \alpha^2 u^{k-2} + \cdots + \alpha^k u^0
\qquad\text{for }k=0,1,\dots
\end{equation}
This transformation clearly commutes with a linear oracle $L$, because left-multiplying each $u^k$ by $L$ and then applying the transformation \eqref{filter} is the same as applying \eqref{filter} first and then left-multiplying by $L$. In other words,
\[
L \cdot \tfrac{1}{z-\alpha} \cdot \hat y = \tfrac{1}{z-\alpha} \cdot L \cdot \hat y.
\]
Since DR uses oracles $(\prox_f,\prox_g)$ and only $\prox_f$ is assumed to be linear, the special commutation relation only holds for $\prox_f$, and we may write
\[
\bmat{ \prox_f & 0 \\ 0 & \prox_g} \bmat{\tfrac{1}{z-\alpha} & 0 \\ 0 & 1} \bmat{\hat y_1 \\ \hat y_2}
= \bmat{\tfrac{1}{z-\alpha} & 0 \\ 0 & 1} \bmat{ \prox_f & 0 \\ 0 & \prox_g}\bmat{\hat y_1 \\ \hat y_2}.
\]
So when $f$ is a quadratic function, we have via \cref{cor:commutation} that $\hat H_1 \algcomp (\prox_f,\prox_g) \sim \hat H_2 \algcomp (\prox_f,\prox_g)$
if $\hat H_2 = \sbmat{\tfrac{1}{z-\alpha} & 0 \\ 0 & 1}^{-1}\hat H_1\sbmat{\tfrac{1}{z-\alpha} & 0 \\ 0 & 1}$. Letting $\hat H_1 = \hat H_{\ref{algo5}}$, we obtain the equivalent algorithm:	
\[
	\bmat{\frac{-1}{z-1} & \frac{1}{z-1} \\ \frac{2 z-1}{z-1} & \frac{-1}{z-1}}\algcomp (\prox_f,\prox_g)
	\sim
	\bmat{\frac{-1}{z-1} & \frac{z-\alpha}{z-1} \\ \frac{2 z-1}{(z-1)(z-\alpha)} & \frac{-1}{z-1}} \algcomp (\prox_f,\prox_g)
\]
One possible realization of the new algorithm is given below.
\vspace{-1em}
\begin{algorithm}[H]
	\centering
	\captionsetup{font=scriptsize}
	\caption{Quadratic-$f$ variant of DR}
	\label{algo_whoa}
	\scriptsize
	\begin{algorithmic}
		\For{$k=0, 1,\dots$}
		\State{$y^k=\prox_g(x_2^k-2x_1^k)$}
		\State{$x_1^{k+1} = \alpha x_1^k - \prox_f\bigl( \alpha x_2^k - x_2^{k+1}\bigr)$}
		\State{$x_2^{k+1} = x_2^k - x_1^k - y^k$}
		\EndFor
	\end{algorithmic}
\end{algorithm}
\vspace{-1em}
Therefore, \cref{algo_whoa,algo5} are equivalent for all $\alpha$ when $f$ is a quadratic function, but they cease to be equivalent when we remove this constraint on $f$. Specific equivalence results that require one of the oracles to be linear can be found, for example, in \cite[Theorem 4]{yan2016self}. However, the approach presented above is far more general, as it allows one to systematically derive entire families of equivalent algorithms.

Moving beyond linearity, different notions of equivalence could conceivably be developed for other classes of oracles, such as dynamic oracles (oracles with memory), or multi-dimensional oracles that have structure, such as sparsity.

\titleparagraph{Nonlinear state updates}

Our main exposition only considers algorithms defined by state-space equations:
a linear map relates $(x^k, u^k)$ to $(x^{k+1},y^k)$.
However, this assumption can be relaxed:
linear state updates is a sufficient condition, but it is not necessary. 
We only require that the map $(u^0,u^1,\dots)\mapsto(y^0,y^1,\dots)$ be LTI. 
For example,  consider \cref{algop3}, which is related to ordinary gradient descent (\cref{algo4}) 
via a nonlinear state transformation.

\vspace{-1em}
\begin{algorithm}[H]
	\centering
	\captionsetup{font=scriptsize}
	\caption{}
	\label{algop3}
	\scriptsize
	\begin{algorithmic}
		\For{$k=0,1, 2,\dots$}
		\State{$x^{k+1} = x^k \exp( - \frac{1}{5} \nabla f(\log x^k))$}
		\EndFor
	\end{algorithmic}
\end{algorithm}
\vspace{-1em}

Although the state update equations for \cref{algop3} are nonlinear, 
if we identify the oracle input $y^k = \log x^k$ and the oracle output $u^k = \nabla f (y^k)$, 
we can eliminate $x^k$ and write the algorithm as $y^{k+1} = y^k - \frac{1}{5}u^k$, 
which is a linear system with transfer function $-\tfrac{1}{5}\tfrac{1}{z-1}$.

\titleparagraph{Repeated oracles}

Some algorithms make multiple calls to the same oracle at each iteration. One such example is the extragradient method \cite{nemirovski2004prox}, given as \cref{extragradient},

\vspace{-1em}
\noindent
\hfil
\begin{minipage}{0.53\linewidth}
\begin{algorithm}[H]
	\centering
	\captionsetup{font=scriptsize}
	\caption{Extragradient method}
	\label{extragradient}
	\scriptsize
	\begin{algorithmic}
		\For{$k=0, 1, 2,\ldots$}
		\State{$w^k = \pi\bigl(x^k - \gamma \phi(x^k)\bigr)$}
		\State{$x^{k+1} = \pi\bigl(x^k - \gamma \phi(w^k)\bigr)$}
		\EndFor
	\end{algorithmic}
\end{algorithm}
\end{minipage}
\hfil	
\begin{minipage}{0.45\linewidth}
	\vspace{1em}
	\[
		\hat H_{\ref{extragradient}} =
		\bmat{  0 & \frac{1}{z} & -\gamma  & 0 \\
				0 & \frac{1}{z} & 0 & -\gamma  \\
				0 & \frac{1}{z} & 0 & 0 \\
				1 & 0 & 0 & 0 }
	\]
\end{minipage}
\hfil
\vspace{1em}

\noindent which calls oracles $\pi$ (a projection) and $\phi$ (a gradient) twice at each iteration. We can model such algorithms in our framework by treating the repeated oracles as separate oracles. Here, $\hat H_{\ref{extragradient}}$ assumes the ordering $(\pi_1,\pi_2,\phi_1,\phi_2)$, where $\pi_1$ refers to the first time the projection oracle $\pi$ is called and similarly for $\pi_2,\phi_1,\phi_2$. Checking for equivalence therefore means checking for shift equivalence \emph{and} oracle permutation among the repeated oracles.

We can also envision an algorithm that trivially iterates a simpler algorithm multiple times, such as ``double gradient descent'', given as \cref{doublegradient} below,

\vspace{-1em}
\noindent
\hfil
\begin{minipage}{0.53\linewidth}
\begin{algorithm}[H]
	\centering
	\captionsetup{font=scriptsize}
	\caption{Double gradient descent}
	\label{doublegradient}
	\scriptsize
	\begin{algorithmic}
		\For{$k=0, 1, 2,\ldots$}
		\State{$w^k = x^k - \gamma \nabla f(x^k)$}
		\State{$x^{k+1} = w^k - \gamma \nabla f(w^k)$}
		\EndFor
	\end{algorithmic}
\end{algorithm}
\end{minipage}
\hfil	
\begin{minipage}{0.45\linewidth}
	\[
		\hat H_{\ref{doublegradient}} =
		\bmat{ \frac{-\gamma }{z-1} & \frac{-\gamma }{z-1} \\
			   \frac{-\gamma  z}{z-1} & \frac{-\gamma }{z-1} }
	\]
\end{minipage}
\hfil
\vspace{1em}

\noindent which assumes an oracle $\Phi = (\nabla f, \nabla f)$. In our framework we \emph{do not} consider \cref{doublegradient} to be equivalent to ordinary gradient descent with stepsize $\gamma$, because the oracles have different sizes and there would be a type mismatch if we tried to equate the sequences of oracle calls between the two algorithms. 

\titleparagraph{Computational complexity}

Checking for equivalence in our framework is straightforward using a computer algebra system to verify relations between transfer functions, as demonstrated in \cref{control}.
Importantly, the transfer functions that arise here are not arbitrary symbolic expressions; they come from linear state-space representations, so each entry is a rational function of $z$ (see \cref{app:construction}). Thus, equivalence can be checked entrywise using exact algebra on rational functions, for example by reducing each entry to numerator/denominator form and verifying the resulting polynomial identities, or equivalently by cross-multiplying and checking that the difference is the zero polynomial.

If an algorithm uses $p$ oracles and has state dimension $n$, then checking oracle or shift equivalence is polynomial in $p$ and $n$ (see the end of \cref{shift-equ}). Checking LFT equivalence also has similar complexity (see the paragraph before \cref{sec:prox_subdiff}) provided the number of $\hat M$ matrices to check is fixed or polynomial in $p$. That being said, as far as we know, all algorithms have $n\leq 3$ and $p\leq 3$, so computational complexity is not a difficulty in practice.

\titleparagraph{Beyond optimization algorithms}

The ideas in this paper are not limited to optimization algorithms, but can be applied to any iterative algorithm involving oracle evaluations and linear updates. For example, algorithms for solving monotone inclusions, variational inequalities, fixed point problems, or equilibrium computation can all be represented as linear dynamical systems in feedback with oracles. Other examples include algorithms for numerical linear algebra (linear systems of equations, least squares, eigenvalue problems) and algorithms for solving differential equations (linear multistep methods, Runge--Kutta methods).\looseness=-1

%%%%%%%%%%%%%%%%%%%%%%%%%%%%%%%%%%%%%%%%%%%%%%%%%%%%%%%%%%%%%%%%%%%%%%%%%%%%%%%%%%%%%%%%%%%
\section{Software implementation}\label{linnaeus}

We implemented our framework as a web-based application called \lin{}\footnote{Named after Carl Linnaeus, creator of the modern system for naming organisms.}, available at \url{https://udellgroup.github.io/Linnaeus_software/}. The user input is a proposed algorithm described using natural syntax, and the output is a list of all algorithms in the library that are equivalent to the input algorithm, along with the parameter settings that make them equivalent and pointers to relevant literature. \lin{} can reproduce all equivalence results mentioned in this paper. The software is open source and we welcome contributions to the library of algorithms and equivalence results.\looseness=-1

%%%%%%%%%%%%%%%%%%%%%%%%%%%%%%%%%%%%%%%%%%%%%%%%%%%%%%%%%%%%%%%%%%%%%%%%%%%%%%%%%%%%%%%%%%%
\section{Conclusion}\label{conclusion}

Our work presents first steps towards systematizing the study of optimization algorithms. When viewed as dynamical systems and characterized in terms of their input-output maps, algorithms are distilled to their essential function: a causal map that produces the next oracle input based on past oracle outputs.

Looking forward, \emph{control theory} is well-positioned to advance the fields of algorithm discovery, analysis, and design. Control theory is concerned with the analysis and synthesis of dynamical systems with the goal of obtaining desirable overall behavior, such as stability or robustness to noise. In particular, tools from \emph{robust control} have been used to analyze and design optimization algorithms with optimized convergence rates or noise-robustness properties, for example \cite{doi:10.1137/15M1009597,michalowsky2021robust}.

\begin{appendices}

%%=============================================%%
%% For submissions to Nature Portfolio Journals %%
%% please use the heading ``Extended Data''.   %%
%%=============================================%%

%%=============================================================%%
%% Sample for another appendix section			       %%
%%=============================================================%%

\newpage
\section{Control theory results}\label{sec:app1}

\subsection{Minimal realizations}\label{app:minimality}

We start with the important definitions of \emph{controllability}, \emph{observability}, and \emph{minimality}.

\medskip
\begin{definition}\label{def:minimality}
	Consider a realization $(A,B,C,D)$ with $A\in\R^{n\times n}$. The realization, or simply the pair $(A,B)$, is \emph{controllable} if the controllability matrix $\mathcal{C}$ has full row rank. The realization, or simply the pair $(C,A)$, is \emph{observable} if the observability matrix $\mathcal{O}$ has full column rank.
	The controllability and observability matrices are defined as:
	\[
		\mathcal{C} \defeq \bmat{B & AB & \cdots & A^{n-1}B}
		\quad\text{and}\quad
		\mathcal{O} \defeq \bmat{C \\ CA \\ \vdots \\ CA^{n-1}}.
	\]
\end{definition}

\medskip
\begin{definition}
	A realization $(A,B,C,D)$ of $\hat H(z)$ is said to be \emph{minimal} if $A$ has the smallest possible dimension. 
\end{definition}
\medskip

The notions of controllability and observability are intimately connected to the notion of a minimal realization \cite[\S3.7]{ZDG}.

\medskip
\begin{proposition}\label{prop:minimality}
	A realization is minimal if and only if it is controllable and observable. Furthermore, all minimal realizations of $\hat H(z)$ are related to one another via a suitably chosen invertible matrix $T$ and the transformation \eqref{eqp11}.
\end{proposition}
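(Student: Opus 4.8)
The plan is to establish the biconditional and the uniqueness claim in three steps, the recurring tool being the block Hankel matrix built from the \emph{Markov parameters} $D, CB, CAB, CA^2B,\dots$ of a realization, which depend only on $\hat H(z)$ since they are the coefficients in the expansion $\hat H(z) = D + \sum_{k\ge 0} CA^kB\,z^{-k-1}$. For the direction ``controllable and observable implies minimal'', I would form the $n$-block Hankel matrix $\mathcal{H}$ with $(i,j)$ block $CA^{i+j-2}B$ and factor it as $\mathcal{H}=\mathcal{O}\mathcal{C}$, with $\mathcal{O},\mathcal{C}$ the observability and controllability matrices of \cref{def:minimality}. Sylvester's rank inequality together with $\rank\mathcal{O}=\rank\mathcal{C}=n$ forces $\rank\mathcal{H}=n$. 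Because $\mathcal{H}$ is determined by $\hat H$, any realization of $\hat H$ with state dimension $m$ factors the same $\mathcal{H}$ through an $m$-dimensional space, so $m\ge\rank\mathcal{H}=n$; hence the given realization is minimal.

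For the converse, ``minimal implies controllable and observable'', I would argue the contrapositive via the Kalman decomposition: if $(A,B)$ is not controllable, an invertible change of state puts the system in block-triangular form in which the uncontrollable coordinates carry no input--output information and can be deleted, yielding a strictly smaller realization with the same transfer function; the unobservable case is dual. So a realization failing either property is not minimal.

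For uniqueness up to similarity, let $(A_i,B_i,C_i,D_i)$, $i\in\{1,2\}$, be minimal realizations of $\hat H$. They share the state dimension $n$ and all Markov parameters, so $D_1=D_2$, $\mathcal{O}_1\mathcal{C}_1=\mathcal{O}_2\mathcal{C}_2=:\mathcal{H}$, and (matching shifted Markov parameters) $\mathcal{O}_1A_1\mathcal{C}_1=\mathcal{O}_2A_2\mathcal{C}_2=:\mathcal{H}^{\uparrow}$. Minimality supplies a left inverse $\mathcal{O}_i^{\dagger}$ of $\mathcal{O}_i$ and a right inverse $\mathcal{C}_i^{\dagger}$ of $\mathcal{C}_i$. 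I would set $T:=\mathcal{O}_2^{\dagger}\mathcal{O}_1$ and observe that also $T=\mathcal{C}_2\mathcal{C}_1^{\dagger}$, since both equal $\mathcal{O}_2^{\dagger}\mathcal{H}\mathcal{C}_1^{\dagger}$ (using $\mathcal{C}_1\mathcal{C}_1^{\dagger}=I$ on the one hand and $\mathcal{O}_2^{\dagger}\mathcal{O}_2=I$ on the other). With $S:=\mathcal{O}_1^{\dagger}\mathcal{O}_2=\mathcal{C}_1\mathcal{C}_2^{\dagger}$, one computes $ST=\mathcal{O}_1^{\dagger}\mathcal{H}\mathcal{C}_1^{\dagger}=I$ and, by the symmetric computation, $TS=I$, so $T$ is invertible with $T^{-1}=S$. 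Reading off the first block-columns of $T\mathcal{C}_1=\mathcal{C}_2$ gives $TB_1=B_2$; the first block-rows of $\mathcal{O}_1S=\mathcal{O}_2$ give $C_1T^{-1}=C_2$; and $TA_1T^{-1}=\mathcal{O}_2^{\dagger}\mathcal{H}^{\uparrow}\mathcal{C}_2^{\dagger}=A_2$. Together with $D_1=D_2$, this is precisely the transformation \eqref{eqp11}. A textbook account is \cite[\S3.7]{ZDG}.

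The main obstacle is the bookkeeping in this last step: one must remember that of the four products $\mathcal{O}_i^{\dagger}\mathcal{O}_i$, $\mathcal{O}_i\mathcal{O}_i^{\dagger}$, $\mathcal{C}_i\mathcal{C}_i^{\dagger}$, $\mathcal{C}_i^{\dagger}\mathcal{C}_i$ only the first and third equal the identity, choose every grouping accordingly, and take the Hankel matrices with enough block rows and columns ($n$ suffices) so that only genuine Markov parameters appear in $\mathcal{H}$ and $\mathcal{H}^{\uparrow}$. The remaining verifications are routine matrix algebra.
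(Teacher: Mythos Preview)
Your proof sketch is correct and is essentially the standard textbook argument. However, you should be aware that the paper does not actually supply its own proof of this proposition: it is stated as a classical result and attributed to \cite[\S3.7]{ZDG}, the very reference you cite at the end of your proposal. So there is no ``paper's proof'' to compare against beyond that citation; you have simply filled in the details that the paper leaves to the literature, and your Hankel-factorization argument for sufficiency, Kalman-decomposition argument for necessity, and pseudo-inverse construction of the similarity $T$ are exactly what one finds there.
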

\medskip

The transfer function corresponding to a realization $(A,B,C,D)$ can be expanded into an infinite series. Its (matrix) coefficients $M_k$ are called the \emph{Markov parameters} and are defined as:
\begin{align*}
	\hat H(z) &= D + C(zI-A)^{-1} B \\
	&= D + CB z^{-1} + CAB z^{-2} + CA^2 B z^{-3} + \cdots + CA^{k-1}B z^{-k} + \cdots \\
	&\eqdef M_0 + M_1 z^{-1} + M_2 z^{-2} + M_3 z^{-3} + \cdots + M_k z^{-k} + \cdots
\end{align*}
The Markov parameters only depend on the transfer function, so all realizations of a given transfer function have the same Markov parameters.
We can arrange the Markov parameters into the semi-infinite \emph{Hankel matrix}, which factors as:
\[
\bH \defeq \bmat{ M_1 & M_2 & M_3 & \cdots \\
	   M_2 & M_3 & M_4 & \cdots \\
	   M_3 & M_4 & M_5 & \cdots \\
		\vdots & \vdots & \vdots & \ddots} =
\bmat{ CB & CAB & CA^2B & \cdots \\
	   CAB & CA^2B & CA^3B & \cdots \\
	   CA^2B & CA^3B & CA^4B & \cdots \\
		 \vdots & \vdots & \vdots & \ddots}
		= \underbrace{\bmat{C \\ CA \\ CA^2 \\ \vdots}}_{\bO} 
		\underbrace{\bmat{ B & AB & A^2 B & \cdots}}_{\bC}.
\]
In particular, this identity must hold for a minimal realization, which is controllable and observable by \cref{prop:minimality}. We immediately obtain the following result.

\medskip
\begin{proposition}\label{prop:hankel}
	The minimal realization of a system has a state dimension equal to $n = \rank(\bH)$, where $\bH$ is the Hankel matrix of the system.
\end{proposition}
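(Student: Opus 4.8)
The plan is to exploit the factorization $\bH = \bO\bC$ displayed just above the statement, together with the fact (\cref{prop:minimality}) that a minimal realization $(A,B,C,D)$ — say with $A\in\R^{n\times n}$ — is both controllable and observable. First I would show that for such a realization the semi-infinite observability and controllability matrices $\bO$ and $\bC$ each have rank exactly $n$. By the Cayley--Hamilton theorem, $A^n$ is a linear combination of $I,A,\dots,A^{n-1}$, hence so is every power $A^k$ with $k\ge n$; therefore the column space of $\bC = \bmat{B & AB & A^2B & \cdots}$ coincides with the column space of the finite controllability matrix $\mathcal{C} = \bmat{B & AB & \cdots & A^{n-1}B}$, which has full row rank $n$ by controllability. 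So $\bC$ has full row rank $n$, and the same argument applied to the rows of $\bO$, using observability, shows that $\bO$ has full column rank $n$.

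Next I would establish $\rank(\bO\bC) = n$. The upper bound $\rank(\bO\bC)\le\min(\rank\bO,\rank\bC) = n$ is immediate. For the lower bound, since $\bC$ has row rank $n$ I can select $n$ of its columns forming an invertible $n\times n$ matrix $\bC_0$; the corresponding $n$ columns of $\bO\bC$ are exactly the columns of $\bO\bC_0$, and $\bO\bC_0$ has rank $n$ because $\bO$ has trivial kernel (full column rank) and $\bC_0$ is invertible. Hence $\rank(\bO\bC)\ge n$, so $\rank\bH = \rank(\bO\bC) = n$. Since $\bH$ is assembled only from the Markov parameters and therefore depends only on the transfer function, not on the chosen realization, this $n$ is precisely the state dimension of the minimal realization, proving the claim.

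I expect the only real subtlety here to be bookkeeping with the semi-infinite matrices: one must check that ``rank'' is meaningful for $\bO$, $\bC$, $\bH$ and that the finite-dimensional rank arithmetic $\rank(XY)=n$ carries over. Both points are dispatched cleanly — the Cayley--Hamilton truncation reduces $\bO$ and $\bC$ to their finite blocks, and the lower-bound argument exhibits an explicit invertible $n\times n$ submatrix — so no limiting or topological machinery is needed. It is also worth remarking that the same computation, applied to an arbitrary (possibly non-minimal) realization of dimension $m$, gives $\rank\bH\le m$; thus $\rank\bH$ is a genuine lower bound on the state dimension of \emph{any} realization, which is consistent with it being attained exactly by the minimal one.
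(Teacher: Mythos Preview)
Your proposal is correct and follows essentially the same approach as the paper: the paper presents the factorization $\bH=\bO\bC$, notes that a minimal realization is controllable and observable by \cref{prop:minimality}, and then says the proposition ``immediately'' follows; the Cayley--Hamilton reduction you spell out is exactly what the paper invokes in \cref{rem:hankel_truncation}. Your write-up simply makes explicit the rank bookkeeping (in particular the lower bound via an invertible $n\times n$ sub-block) that the paper leaves to the reader.
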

\medskip

\begin{remark}\label{rem:hankel_truncation}
	When applying \cref{prop:hankel} in practice, it suffices to compute a block-$N\times N$ truncation of $\bH$, where $N$ is any upper bound on the minimal state dimension. For example, we can let $N$ be the dimension of any given realization. This works because due to the Cayley--Hamilton theorem, the ranks of $\bO$ and $\bC$ can no longer increase after the $N\textsuperscript{th}$ block-row or block-column, respectively.
\end{remark}

In \cref{app:construction}, we show how to construct a minimal realization of a given transfer function.

\subsection{Proof of \texorpdfstring{\cref{prop:tf-ss,prop:tf-ss2}}{\ref{prop:tf-ss} and \ref{prop:tf-ss2}}}\label{proof:prop12}

\begin{proof}
	Applying \cref{eq:zdomain_yu_map}, we may write the input-output maps for both algorithms as $\hat y(z) = \hat O_i(z) x^0_i + \hat H_i(z) \hat u(z)$ for $i=1,2$. If there exist $x^0_1$ and $x^0_2$ such that the input-output maps of both systems are the same, we must have $\hat O_1(z) x^0_1 = \hat O_2(z)x^0_2$ and $\hat H_1(z) = \hat H_2(z)$. This establishes necessity for \cref{prop:tf-ss,prop:tf-ss2}.

	We now prove sufficiency. If $\hat H_1(z) = \hat H_2(z)$, both systems have the same input-output map if and only if $\hat O_1(z)x_1^0 = \hat O_2(z)x_2^0$. A trivial solution is $x_1^0=0$ and $x_2^0=0$, which proves sufficiency for \cref{prop:tf-ss}. Now suppose both realizations are minimal and pick any $x_2^0$. By \cref{prop:minimality}, there exists an invertible matrix $T$ such that:
	\(
	(A_2,B_2,C_2,D_2) = (TA_1T^{-1},TB_1,C_1T^{-1}D_1)
	\).
	Therefore, we have:
	\begin{align*}
		\hat O_2(z) x_2^0 &= z C_2(zI-A_2)^{-1} x_2^0
		= z C_1T^{-1}(zI-TA_1T^{-1})^{-1} x_2^0 \\
		&= z C_1 (zI-A_1)^{-1} T^{-1} x_2^0
		= \hat O_1(z) T^{-1} x_2^0.
	\end{align*}
	Setting $x_1^0 = T^{-1} x_2^0$ leads us to $\hat O_1(z)x_1^0 = \hat O_2(z)x_2^0$, as required. To prove uniqueness, suppose $\tilde x_1^0 \neq x_1^0$ is a different solution, so that $\hat O_1(z)x_1^0 = \hat O_1(z)\tilde x_1^0$. In other words, $\hat O_1(z) v = 0$ for some $v \defeq \tilde x_1^0-x_1^0\neq 0$. Then, we have:
	\[
	\hat O_1(z) v
	= z C_1(zI-A_1)^{-1}v
	= \left( C_1 + C_1 A_1 z^{-1} + C_1 A_1^2 z^{-2} + \cdots \right) v
	= 0.
	\]
	We conclude that $C_1 A_1^k v = 0$ for $k=0,1,\dots$, and therefore $\mathcal{O}_1v=0$, where $\mathcal{O}_1$ is the observability matrix (\cref{def:minimality}). Minimality of $(A_1,B_1,C_1,D_1)$ implies observability by \cref{prop:minimality}, and therefore $\mathcal{O}_1$ has full column rank and $v=0$, a contradiction. This establishes sufficiency of \cref{prop:tf-ss2} and completes the proof. 
\end{proof}

\newpage
\subsection{From transfer functions to algorithms}\label{app:construction}

We begin by summarizing some key properties of transfer functions.

\medskip
\begin{proposition}\label{prop:property_of_tf}
	Consider a state-space system $(A,B,C,D)$ and its associated transfer function $\hat H(z) = D+C(zI-A)^{-1}B$.
	\begin{enumerate}
		\item $\hat H$ is \emph{rational}, which means each entry $\hat H_{ij}(z)$ can be expressed as a ratio of polynomials $\hat H_{ij}(z) = \frac{p_{ij}(z)}{q_{ij}(z)}$ where $p_{ij}(z)$ and $q_{ij}(z)$ have no common factors.
		\item $\hat H$ is \emph{proper}, which means that $\deg(p_{ij}) \leq \deg(q_{ij})$ for all $i,j$.
		\item $\hat H$ is \emph{strictly proper}, meaning $\deg(p_{ij}) < \deg(q_{ij})$ for all $i,j$, if and only if $D = 0$.
	\end{enumerate}
\end{proposition}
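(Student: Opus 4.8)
The plan is to argue directly from the resolvent formula $\hat H(z) = D + C(zI-A)^{-1}B$ using Cramer's rule. First I would write $(zI-A)^{-1} = \operatorname{adj}(zI-A)/\chi(z)$, where $\chi(z) \defeq \det(zI-A)$ is the characteristic polynomial --- a monic polynomial of degree exactly $n$ --- and every entry of $\operatorname{adj}(zI-A)$ is a polynomial in $z$ of degree at most $n-1$, being a signed $(n-1)\times(n-1)$ minor of $zI-A$. Hence each entry of $C(zI-A)^{-1}B$ has the form $\tilde p_{ij}(z)/\chi(z)$ with $\deg \tilde p_{ij} \le n-1 < n = \deg \chi$. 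Adding the constant matrix $D$ entrywise gives $\hat H_{ij}(z) = \bigl(\tilde p_{ij}(z) + D_{ij}\,\chi(z)\bigr)/\chi(z)$, a ratio of polynomials with numerator degree at most $n$ and denominator degree exactly $n$. Dividing out the greatest common divisor of numerator and denominator yields the reduced form $p_{ij}/q_{ij}$ with no common factors, which establishes part~1.

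For part~2 I would observe that cancelling a common factor $r(z)$ from the numerator and $\chi(z)$ lowers both degrees by $\deg r$, so the inequality ``$\deg(\text{numerator}) \le \deg(\text{denominator})$'' is preserved under reduction; since before reduction the numerator has degree at most $n$ and the denominator degree exactly $n$, the inequality holds for $p_{ij}/q_{ij}$. Thus $\hat H$ is proper.

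For part~3 the cleanest tool is the Markov-parameter expansion recorded in Appendix~\ref{app:minimality}, namely $\hat H(z) = D + \sum_{k\ge 1} C A^{k-1} B\, z^{-k}$, which shows $\hat H(z) \to D$ entrywise as $z \to \infty$; that is, $p_{ij}(z)/q_{ij}(z) \to D_{ij}$. If $D = 0$ then $\hat H(z) = C(zI-A)^{-1}B$, whose entries before reduction have numerator degree $\le n-1$ \emph{strictly} less than the denominator degree $n$; this strict inequality is again preserved by cancelling common factors, so $\hat H$ is strictly proper. Conversely, if $D \ne 0$ pick $i,j$ with $D_{ij} \ne 0$; then $p_{ij}/q_{ij} \to D_{ij} \ne 0$ as $z\to\infty$ forces $\deg p_{ij} = \deg q_{ij}$, so $\hat H_{ij}$ --- and hence $\hat H$ --- is not strictly proper.

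The argument is essentially bookkeeping, and the only point requiring care is that properness and strict properness are defined on the \emph{reduced} representations $p_{ij}/q_{ij}$, so I must check that passing to lowest terms does not spoil the degree inequalities; it does not, because cancelling a common factor subtracts the same amount from the degrees of numerator and denominator. The Markov-parameter identity from Appendix~\ref{app:minimality} disposes of the ``$\hat H$ strictly proper $\Rightarrow D = 0$'' direction with no additional computation.
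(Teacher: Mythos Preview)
Your proposal is correct and is precisely the detailed expansion of what the paper does: the paper gives no argument beyond the single sentence ``\cref{prop:property_of_tf} follows immediately from the formula $\hat H(z) = D+C(zI-A)^{-1}B$,'' and your Cramer's-rule degree count together with the limit $\hat H(z)\to D$ is exactly how one unpacks that remark. There is nothing to add.
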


\cref{prop:property_of_tf} follows immediately from the formula $\hat H(z) = D+C(zI-A)^{-1}B$. 

\titleparagraph{From transfer functions to minimal realizations}

Given that all state-space realizations yield proper transfer functions, we now show how to construct a minimal realization given an arbitrary $p\times m$ proper rational transfer function $\hat H(z)$.\footnote{When representing algorithms, we typically have $p=m$ (square $\hat H$), since there are as many oracle inputs as oracle outputs. In general, we can find state-space realizations for non-square transfer functions.}
There are many efficient methods for constructing realizations from transfer functions \cite[\S5.4]{antsaklis2006linear}. We now present one such method, the Ho--Kalman algorithm \cite{hokalman}, which is direct, efficient, and numerically stable.

The Ho--Kalman algorithm is based on the Hankel matrix and leverages \cref{prop:hankel,rem:hankel_truncation}. Let $N$ be an upper bound on the number of states of the minimal realization. One way to obtain such a bound is to let $N$ be the degree of the denominator polynomial of $\det \hat H(z)$. 
Now form the truncated Hankel matrix
\[
\mathcal{H}_N \defeq \bmat{ M_1 & M_2 & \cdots & M_N \\
	   M_2 & M_3 & \cdots & M_{N+1} \\
	   \vdots & \vdots & \iddots & \vdots \\
	   M_N & M_{N+1} & \cdots & M_{2N-1}}
		= \underbrace{\bmat{C \\ CA \\ \vdots \\ CA^{N-1}}}_{\mathcal{O}_N} 
		\underbrace{\bmat{ B & AB & \cdots & A^{N-1}B}}_{\mathcal{C}_N}
		= \mathcal{O}_N \mathcal{C}_N.
\]
The \emph{shifted} Hankel matrix (starts at $M_2$ instead of $M_1$) can also be factored:
\[
{\mathcal{H}}_N^+ \defeq \bmat{ M_2 & M_3 & \cdots & M_{N+1} \\
	   M_3 & M_4 & \cdots & M_{N+2} \\
	   \vdots & \vdots & \iddots & \vdots \\
	   M_{N+1} & M_{N+2} & \cdots & M_{2N}}
		= \mathcal{O}_N A \mathcal{C}_N.
\]
Now compute the compact singular value decomposition (SVD) $\mathcal{H}_N = U \Sigma V^\top$. By \cref{prop:hankel}, $\Sigma \in \R^{n\times n}$ and $n$ is the minimal state dimension.
We will choose $\mathcal{O}_N = U\Sigma^{1/2}$ and $\mathcal{C}_N = \Sigma^{1/2}V^\top$. These matrices are left- and right-invertible, respectively. Finally, define the following matrices.
\begin{itemize}
\item $D = M_0 = \lim_{z\to\infty} \hat H(z)$.
\item $C$ is the first $p$ rows of $\mathcal{O}_N = U\Sigma^{1/2}$.
\item $B$ is the first $m$ columns of $\mathcal{C}_N = \Sigma^{1/2}V^\top$.
\item $A = \mathcal{O}_N^\dagger \mathcal{H}_N^+ \mathcal{C}_N^\dagger
= \Sigma^{-1/2}U^\top \mathcal{H}_N^+ V \Sigma^{-1/2}$.
\end{itemize}
Then, $(A,B,C,D)$ is a minimal realization of $\hat H(z)$.
In practice, one typically chooses $N$ to be a loose upper bound, such as twice the degree of the determinant $\det \hat H(z)$, as this yields a more numerically stable SVD computation.

\medskip
\begin{remark}
	In the field of controls, the Ho--Kalman algorithm is often used as a method of \emph{system identification}, where the Markov parameters $M_k$ are measured in a physical system, and we seek a state-space model $(A,B,C,D)$ or transfer function model $\hat H(z)$ that fits the data \cite{hokalman}. Hankel singular values also show up in \emph{model reduction}, when we seek simpler approximate models (with fewer states). One way is to truncate the smallest Hankel singular values \cite[\S7]{ZDG}. This is akin to finding a low-rank approximation of a real matrix by truncating the smallest singular values.
\end{remark}

\titleparagraph{From realizations to state update equations}

As described in \cref{preliminary}, state-space realization can directly be converted back to step-by-step implementations of the form of \cref{algo_generic_iterative}. These implementations will be explicit if $D$ can be permuted into a strictly lower-triangular matrix, and implicit otherwise.

\section{Equivalence proofs}\label{app:equivalence}

\subsection{Proof of \cref{lem:shift_equiv}}\label{app:equivalence1}
	
	We verify each property separately.
	
	\smallskip
	\noindent\emph{Reflexivity}: Let $\hat \Delta_m = I$. For any $\hat H$, we trivially have $\hat H = \hat \Delta_m \hat H \hat \Delta_m^{-1}$. Therefore $\hat H \sim \hat H$, establishing reflexivity.
	
	\smallskip
	\noindent\emph{Symmetry:} Suppose $\hat H_1 \sim \hat H_2$. Therefore,
	\begin{align*}
	\hat H_1 = \hat \Delta_m \hat H_2 \hat \Delta_m^{-1}
	\;\implies\;
	\hat H_2  = \hat \Delta_m^{-1} \hat H_1 \hat \Delta_m
	= \bigl( \hat \Delta_m^{-1} z^{-M} \bigr) \hat H_1 \bigl( \hat \Delta_m^{-1} z^{-M} \bigr)^{-1},
	\end{align*}
	where we let $M \defeq \max(m_1,\dots,m_p)$. Therefore, $\hat \Delta_m^{-1}z^{-M}$ is a valid multi-shift (all powers of $z$ are nonpositive) and $\hat H_2 \sim \hat H_1$, establishing symmetry.
	
	\smallskip
	\noindent\emph{Transitivity:} Suppose $\hat H_1 \sim \hat H_2$ and $\hat H_2 \sim \hat H_3$. Therefore there exist multi-shifts $\hat \Delta_1$ and $\hat \Delta_2$ such that $\hat H_1 = \hat \Delta_1 \hat H_2 \hat \Delta_1^{-1}$ and $\hat H_2 = \hat \Delta_2 \hat H_3 \hat \Delta_2^{-1}$. Thus, we have
	\[
	\hat H_1 = \hat \Delta_1 \hat H_2 \hat \Delta_1^{-1}
	= \hat \Delta_1 \hat \Delta_2 \hat H_3 \hat \Delta_2^{-1} \hat \Delta_1^{-1}
	= \bigl(\hat \Delta_1 \hat \Delta_2\bigr) \hat H_3 \bigl(\hat \Delta_1 \hat \Delta_2\bigr)^{-1}.
	\]
	Since $\hat \Delta_1 \hat \Delta_2$ is a valid multi-shift,  we have $\hat H_1 \sim \hat H_3$, establishing transitivity.\qed

\subsection{Proof of \cref{thm:main_LFT_result}} \label{app:equivalence2}

Suppose that $\mathcal{R}\bmat{\hat H_1 \\ I} = \hat M \mathcal{R}\bmat{\hat H_2 \\ I}$. Then for any $\hat y_2$, there exists a $\hat y_1$ such that
\(
\bmat{\hat H_1\hat y_1 \\ \hat y_1} = \hat M \bmat{\hat H_2 \hat y_2 \\ \hat y_2}
\).
Multiplying both sides on the left by $\bmat{I & -\hat H_1}$, we obtain $\bmat{I & -\hat H_1}\hat M \bmat{ \hat H_2 \\ I}\hat y_2 = 0$. This holds for all $\hat y_2$, therefore $\bmat{I & -\hat H_1}\hat M \bmat{ \hat H_2 \\ I}=0$.

\smallskip
Conversely, suppose that $\bmat{I & -\hat H_1}\hat M \bmat{ \hat H_2 \\ I}=0$. Augment the block matrices to obtain:
$\bmat{I & -\hat H_1 \\ 0 & I} \hat M \bmat{I & \hat H_2 \\ 0 & I} = \bmat{\star & 0 \\ \star & \star}$, where the $\star$'s denote unimportant blocks. The left-hand side is invertible, so the right-hand side is invertible as well. Inverting both sides, the right-hand side remains block-lower triangular, and we obtain
$\bmat{I & -\hat H_2 \\ 0 & I} \hat M^{-1} \bmat{I & \hat H_1 \\ 0 & I} = \bmat{\star & 0 \\ \star & \star}$, where the $\star$'s indicate different blocks from before. Extracting the $(1,2)$ block, we obtain $\bmat{I & -\hat H_2}\hat M^{-1} \bmat{ \hat H_1 \\ I}=0$.

Now, pick $\bmat{\hat u \\ \hat v} \in \hat M \mathcal{R}\bmat{\hat H_2 \\ I}$, so there exists some $\hat y$ such that $\bmat{\hat u \\ \hat v} = \hat M \bmat{\hat H_2  \\ I} \hat y$. Multiplying on the left by $\bmat{I & -\hat H_1}$, we conclude that $\hat u = \hat H_1 \hat v$, which we can rewrite as $\bmat{\hat u \\ \hat v} = \bmat{\hat H_1 \\ I}\hat v \in \mathcal{R}\bmat{\hat H_1 \\ I}$. Therefore,
\(
\mathcal{R}\bmat{\hat H_1 \\ I} \supseteq \hat M \mathcal{R}\bmat{\hat H_2 \\ I}
\).
Similarly, pick $\bmat{\hat u \\ \hat v} \in \hat M^{-1} \mathcal{R}\bmat{\hat H_1 \\ I}$. Following similar steps, we obtain
\(
\mathcal{R}\bmat{\hat H_2 \\ I} \supseteq \hat M^{-1} \mathcal{R}\bmat{\hat H_1 \\ I}
\).
Combining the two inclusions above, we obtain $\mathcal{R}\bmat{\hat H_1 \\ I} = \hat M \mathcal{R}\bmat{\hat H_2 \\ I}$, as required. \qed

\end{appendices}

\bibliography{sn-bibliography}

\end{document}